\documentclass[11pt,a4paper,openright,oneside]{article}
\usepackage{amsfonts, amsmath, amssymb,latexsym,amsthm, mathrsfs, enumerate,mathtools}
\usepackage[english]{babel}
\usepackage{epsfig}
\usepackage{xcolor}
\usepackage{cite}
\parskip=5pt
\parindent=15pt
\usepackage[margin=1.2in]{geometry}
\usepackage{graphicx}
\usepackage{listings}
\usepackage[latin1]{inputenc}

\setcounter{page}{1}

\newcommand{\dotcup}{\,\ensuremath{\mathaccent\cdot\cup}\,}

\newcommand{\nloop}{\mathrlap{\circlearrowleft}\,\setminus}

\makeatletter
\providecommand\dotbigcup{\mathpalette\@barred\cdot}
\def\@barred#1#2{\ooalign{\hfil$#1\bigcup$\hfil\cr\hfil$#1#2$\hfil\cr}}

\makeatother

\numberwithin{equation}{section}
\newtheorem{teo}{Theorem}[section]
\newtheorem*{teo*}{Theorem}
\newtheorem*{prop*}{Proposition}
\newtheorem*{corol*}{Corollary}
\newtheorem{prop}[teo]{Proposition}
\newtheorem{corol}[teo]{Corollary}
\newtheorem{lema}[teo]{Lemma}
\newtheorem{lemma}[teo]{Lemma}

\theoremstyle{definition}

\newtheorem*{claim}{Claim}

\newtheorem{obs}[teo]{Observation}

\newtheorem{remark}[teo]{Remark}
\newtheorem{quest}[teo]{Question}
\newtheorem{theorem}[teo]{Theorem}

\newcommand{\Cay}{\mathrm{Cay}}
\newcommand{\End}{\mathrm{End}}
\newcommand{\Aut}{\mathrm{Aut}}

\usepackage{fancyhdr}

\lhead{}
\lfoot{}
\rhead{}
\cfoot{}
\rfoot{\thepage}

\begin{document}

\title{On monoid graphs}
\author{Kolja Knauer \and Gil Puig i Surroca}

\maketitle

\begin{abstract}
 We investigate Cayley graphs of finite semigroups and monoids. First, we look at semigroup digraphs, i.e., directed Cayley graphs of semigroups, and give a Sabidussi-type characterization in the case of monoids. We then correct  a proof of Zelinka from '81 that characterizes semigroup digraphs with outdegree $1$. Further, answering a question of Knauer and Knauer, we construct for every $k\geq 2$  connected $k$-outregular non-semigroup digraphs. On the other hand, we show that every sink-free directed graph is a union of connected components of a monoid digraph.
 
 Second, we consider monoid graphs, i.e., underlying simple undirected graphs of Cayley graphs of monoids. We show that forests and threshold graphs form part of this family. Conversely, we construct the -- to our knowledge -- first graphs, that are not monoid graphs. We present non-monoid graphs that are planar, have arboricity $2$, and treewidth $3$  on the one hand, and non-monoid graphs of arbitrarily high connectivity on the other hand. 
 
 Third, we study generated monoid trees, i.e., trees that are monoid graphs with respect to a generating set. We give necessary and sufficient conditions for a tree to be in this family, allowing us to find large classes of trees inside and outside the family.
\end{abstract}
%
%
%

\section{Introduction}

After being introduced in 1878~\cite{Cayley1878}, Cayley graphs soon became a prominent tool in both graph and group theory. As of today Cayley graphs of groups play a prominent role in (books devoted to) algebraic graph theory, see e.g.~\cite{God-01,Kna-19}. On one hand, they are useful for constructions. They are crucial in the proof of Frucht's theorem~\cite{Frucht1939} and the related representation problems~\cite{Babai1981}, i.e., how to represent a group as the automorphism group of a graph. On the other hand, Cayley graphs endow groups with metric or topological properties. For instance, White defined the genus of a group as the minimum genus for any connected Cayley graph of the group~\cite{White1972}. This  created intricate links to symmetry groups of surfaces, see~\cite{GrossTucker1987,White1984}. 
 
Cayley graphs of monoids and semigroups (usually considered as \emph{directed Cayley graphs}) have been studied less, but still there is a considerable amount of work, see e.g.~\cite{Kna-19} for a book and the references therein and~\cite{KRY2009} for a survey on this subject. Characterizations of Cayley graphs of certain classes of semigroups have been subject to some research effort, see~\cite{zbMATH05610940,Kel-06,zbMATH06864652,zbMATH06948232,zbMATH06613956,zbMATH06184562,zbMATH06147894,zbMATH06029728,zbMATH06093204,zbMATH05973394,zbMATH05886836,zbMATH06139402,zbMATH05701935,zbMATH05812671}.
Also Cayley graphs from certain restricted families have been studied, such as transitively acyclic, directed ones also known as Cayley posets~\cite{gar-20} as well as generalized Petersen graphs~\cite{zbMATH05973394,zbMATH06093204,Lov-97,GMK21} and others~\cite{zbMATH06603377,zbMATH06120589}.
  
Along the lines of White's genus of a group characterizations of semigroups admitting Cayley graphs with topological embeddability properties have been investigated extensively~\cite{Sol-06,Sol-11,Zha-08,Kna-10,Kna-16}. Note that for such topological questions edge directions, multiplicities and loops can usually be suppressed, which leads to the notion of \emph{underlying simple undirected Cayley graph}. Also other questions of the type which semigroups admit a Cayley graph from a given class have been considered~\cite{zbMATH05811972,zbMATH05647985,Kel-02,Kel-03}.
This latter type of question is related to the representation problem for monoids. Indeed, a standard strategy to represent a monoid is to take its \emph{colored Cayley graph}, see Figure~\ref{fig:ExampleMonoidGraph2}, and mimic the colors with certain rigid gadgets on the edges. Then the monoid is isomorphic to the endomorphism monoid of the resulting graph, see~\cite{HL69,HP64,HP65}. Thus, if we know that a monoid has a Cayley graph in a given class, then (depending on the edge gadgets) it is the endomorphism monoid of a graph from that class. There are classic and recent questions and results on how sparse a class can be be while being rich enough to represent all groups~\cite{Frucht1939,grohe2020automorphism,B74} or monoids, see~\cite[Problem 19.2]{NOdM12} or~\cite{BP80}.
  
\begin{figure}[h] 
\centering
\includegraphics[width=.8\textwidth]{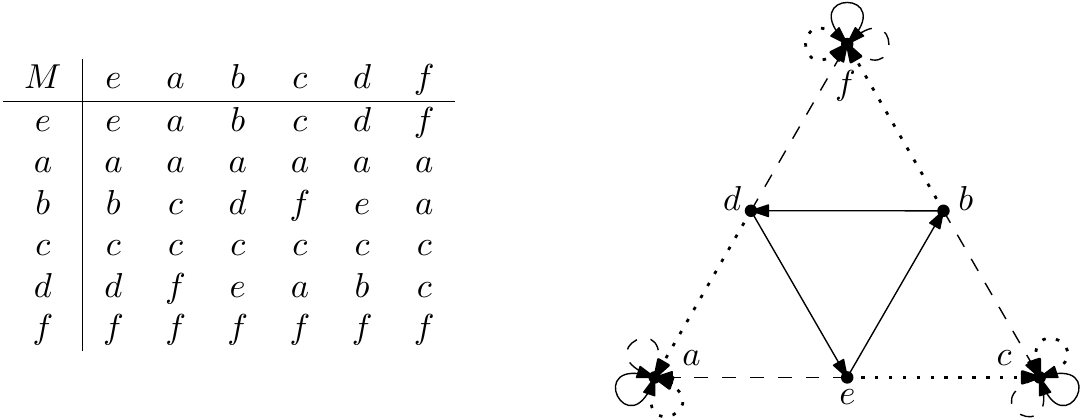}
\caption{Left: the multiplication table of a monoid $M$ with neutral element $e$. Right: the colored Cayley graph $\Cay_{\mathrm{col}}(M,\{a,b,c\})$ with connection set $\{a,b,c\}$.}\label{fig:ExampleMonoidGraph2}
\end{figure}

In this paper we study the related question of how large the class of (finite) Cayley graphs is and if some structural properties can be shown similar to the case of Cayley graphs of groups. In particular, a starting point was the question whether sparsity can ensure Cayleyness. A similar study has been carried out with respect to Cayley posets in~\cite{gar-20}. 

First, we study the class of directed Cayley graphs of semigroups and monoids called \emph{semigroup digraphs} and \emph{monoid digraphs}, respectively.
In Section~\ref{sec:basic_properties}, we present an easy but to our knowledge unpublished analogue of Sabidussi's Lemma~\cite{Sabidussi1958} for monoid digraphs. This answers a question of~\cite[Chapter 11.3]{Kna-19}. Section~\ref{sec:digraphs} further studies the class of semigroup and monoid digraphs. We review results of Zelinka~\cite{Zelinka1981}, who characterized such digraphs with outdegree $1$. In particular, we correct his proof. Answering a question of Knauer and Knauer~\cite[below Proposition 11.3.2]{Kna-19}, we construct for every $k\geq 2$ connected $k$-outregular digraphs (of high (strong) connectivity) that are not semigroup digraphs. On the other hand, we show that every sink-free directed graph, i.e.~with minimum outdegree at least $1$,  is a union of connected components of a monoid digraph. 
This can be seen as an analogue to the result that every graph is isomorphic to some induced subgraph of a Cayley graph of a group~\cite{Babai1978}.

Second, we focus on the underlying simple undirected graphs of Cayley graphs of monoids, called \emph{monoid graphs}. As mentioned earlier removing directions, orientations, and loops is natural when considering embeddability and sparsity questions of the resulting graph and has prompted questions about this class, see e.g.~\cite{GMK21,Kna-10,Kna-16}. On the other hand, one difficulty that arises in their study is that they carry less categorical and algebraic structure than their directed non-simple counterparts, see e.g.~\cite[Chapter 11.1]{Kna-19}. In Section~\ref{section:monoid_graphs} we show that forests and threshold graphs are monoid graphs. After that, we show to our knowledge the first graphs, that are not monoid graphs. These results were the starting point for this paper.  In particular, we present families of planar non-monoid graphs of arboricity $2$ and treewidth $3$ and construct non-monoid graphs of arbitrarily high connectivity based on Ramanujan graphs. 
 
Finally, we study monoid graphs where the connection set of the Cayley graph generates the monoid. In Section~\ref{sec:monoid_tree} we give a necessary and a sufficient condition for a tree to be in that class. This allows us to find large families of trees inside and outside of the family. 

There are still many intriguing open questions, which we survey in Section~\ref{sec:conclusion}.
%
%


\section{Preliminaries, properties, and characterizations}\label{sec:basic_properties}

For a semigroup $S$ and a subset $C\subseteq S$ called \emph{connection set}, we consider three different Cayley graphs. The directed graph with vertex set $S$ and arc set $\{(s,sc)\ |\ s\in S,\,c\in C\}$, denoted $\Cay(S,C)$, is a digraph with possible loops and anti-parallel arcs, but without parallel arcs. We call the resulting digraphs \emph{semigroup} and \emph{monoid digraphs}, respectively. Note that the construction allowing parallel arcs has been studied from a categorical point of view, see e.g.~\cite[Chapter 11.1]{Kna-19}. Suppressing multiplicities still yields a faithful covariant functor as in the case with parallel arcs.

The underlying simple undirected graph of $\Cay(S,C)$, which we denote $\underline{\Cay}(S,C)$, carries less information. It looses categorical and algebraic properties, but as justified above, is natural from a graph theoretical point of view. We call the resulting graphs \emph{semigroup} and \emph{monoid graphs}. In the last section we will further restrict this class by requiring that the connection set $C$ is a generating set, yielding the notion of \emph{generated monoid graphs}.

The richest representation of the pair $(S,C)$ is the multi-digraph with colored arcs $\Cay_{\mathrm{col}}(S,C)$, where the colors correspond to the elements of $C$ generating the arcs. Here for each $c$ there is an arc $(s,sc)$ of color $c$, in particular, we allow multiple parallel arcs.
%
%


We denote by $\End(G)$ and $\Aut(G)$ the endomorphism monoid and automorphism group of $G$, respectively. Here $G$ can be colored and directed, only directed, or 
undirected. In directed and undirected graphs endomorphisms just have to map arcs to arcs and edges to edges, respectively. If $G$ is colored and directed then an endomorphism has to map any arc to an arc of the same color. Automorphisms are just bijective endomorphisms. Clearly, the endomorphism monoid of a colored digraph is a submonoid of the endomorphism monoid of the digraph without colors. However, suppressing loops may remove some endomorphisms. This is, for a semigroup $S$ and $C\subseteq S$, we have $\End(\Cay_{\mathrm{col}}(S,C))\leq\End(\Cay(S,C))$ but there might be $\varphi\in \End(\underline{\Cay}(S,C))\setminus\End(\Cay(S,C))$. 

The multiplication law of a semigroup yields information shared by all the (colored) Cayley digraphs it defines. See the following basic lemma.

\begin{lemma}\label{lem:leftM} If $S$ is a semigroup and $C\subseteq S$, then mapping every $s\in S$ to left-multiplication $\varphi_s$ with $s$ is a homomorphism from $S$ to $\End(\Cay_{\mathrm{col}}(S,C))$.  
\end{lemma}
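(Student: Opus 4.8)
The plan is to verify the two ingredients hidden in the phrase ``homomorphism into $\End(\Cay_{\mathrm{col}}(S,C))$'': first, that each $\varphi_s$ actually lands in $\End(\Cay_{\mathrm{col}}(S,C))$, i.e.\ that left-multiplication by $s$ is a color-preserving endomorphism of the colored Cayley digraph; and second, that the assignment $s\mapsto\varphi_s$ respects the multiplication, i.e.\ $\varphi_{st}=\varphi_s\circ\varphi_t$. Both reduce to a single application of associativity, so I expect no genuine obstacle; the only point demanding a little care is the convention for composition in $\End$, which decides whether one obtains a homomorphism or an anti-homomorphism.

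For the first ingredient, I would recall that for each color $c\in C$ the arcs of $\Cay_{\mathrm{col}}(S,C)$ of color $c$ are exactly the pairs $(x,xc)$ with $x\in S$. Applying $\varphi_s$ to the endpoints of such an arc gives $(\varphi_s(x),\varphi_s(xc))=(sx,\,s(xc))$, and associativity yields $s(xc)=(sx)c$. Hence the image is $(sx,(sx)c)$, which is again the arc of color $c$ emanating from the vertex $sx$. Thus $\varphi_s$ sends every color-$c$ arc to a color-$c$ arc for every $c$, so $\varphi_s\in\End(\Cay_{\mathrm{col}}(S,C))$. Note that closure of $S$ under multiplication already guarantees that $\varphi_s$ maps vertices to vertices, and loops need no separate treatment, since a loop is just a color-$c$ arc $(x,xc)$ with $xc=x$ and is handled by the identical computation.

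For the second ingredient, I would compute directly: for every $x\in S$, associativity gives $\varphi_{st}(x)=(st)x=s(tx)=\varphi_s(\varphi_t(x))=(\varphi_s\circ\varphi_t)(x)$, whence $\varphi_{st}=\varphi_s\circ\varphi_t$. Reading composition in $\End(\Cay_{\mathrm{col}}(S,C))$ as ordinary function composition, this is precisely the statement that $s\mapsto\varphi_s$ preserves products, i.e.\ it is a semigroup homomorphism (the left regular representation, here refined so as to respect colors). If $S$ is a monoid with identity $e$, one may add the observation $\varphi_e=\mathrm{id}$, so that the map is moreover identity-preserving. The entire argument is thus a two-line verification whose only substantive input is the associativity of $S$.
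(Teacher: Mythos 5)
Your proof is correct and follows essentially the same route as the paper's: verify that left-multiplication maps each color-$c$ arc $(x,xc)$ to the color-$c$ arc $(sx,(sx)c)$ via associativity, and then check $\varphi_{st}=\varphi_s\circ\varphi_t$. Your version just spells out the associativity steps and the composition convention that the paper leaves implicit.
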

\begin{proof}
If $s,t\in S$, $c\in C$ and $(t,tc)$ is an arc of 
$\Cay_{\mathrm{col}}(S,C)$ having color $c$, then $(st,stc)$ is also an arc of $\Cay_{\mathrm{col}}(S,C)$ having color $c$. Thus, left-multiplication $\varphi_s$ with $s$ is an element of $\End(\Cay_{\mathrm{col}}(S,C))$. Clearly, $\varphi_{st}=\varphi_{s}\circ \varphi_{t}$, so this is a semigroup homomorphism.
\end{proof}

A well-known strengthening of Lemma~\ref{lem:leftM} is the following property of \emph{generated} colored Cayley graphs of monoids, see e.g.~\cite[Theorem 7.3.7]{Kna-19}:

\begin{lemma}\label{lem:colM}
Let $M$ be a monoid and be $C\subseteq M$ such that $M=\langle C\rangle$. Then left-multiplication yields an isomorphism from $M$ to $\Cay_{\mathrm{col}}(M,C)$. 
\end{lemma}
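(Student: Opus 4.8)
The plan is to show that the monoid homomorphism $s\mapsto\varphi_s$ furnished by Lemma~\ref{lem:leftM} (where $\varphi_s$ denotes left-multiplication by $s$) is in fact a bijection onto $\End(\Cay_{\mathrm{col}}(M,C))$, the intended codomain; since the inverse of a bijective monoid homomorphism is again a homomorphism, this makes $s\mapsto\varphi_s$ an isomorphism. So two things must be established: injectivity and surjectivity of this map.

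Injectivity is immediate from the presence of a neutral element $e$. Because $\varphi_s(e)=se=s$, any two elements $s,t\in M$ with $\varphi_s=\varphi_t$ satisfy $s=\varphi_s(e)=\varphi_t(e)=t$. This is where the monoid hypothesis (as opposed to a mere semigroup) is used, and it does not yet need $M=\langle C\rangle$.

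The heart of the argument is surjectivity, i.e.\ that \emph{every} colored endomorphism is a left-multiplication. Given $\psi\in\End(\Cay_{\mathrm{col}}(M,C))$, I would set $s:=\psi(e)$ and prove $\psi=\varphi_s$. The key local observation is that in $\Cay_{\mathrm{col}}(M,C)$ each vertex $x$ has, for every color $c\in C$, a \emph{unique} outgoing arc of color $c$, namely $(x,xc)$. Since $\psi$ must map this arc to an arc of the same color out of $\psi(x)$, and the only such arc is $(\psi(x),\psi(x)c)$, one obtains the fundamental relation $\psi(xc)=\psi(x)\,c$ for all $x\in M$ and $c\in C$.

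It then remains to use $M=\langle C\rangle$ to propagate this relation from $e$ to all of $M$. Writing an arbitrary $m\in M$ as a product $m=c_1c_2\cdots c_k$ of generators (the empty product giving $e$), an induction on $k$ using the relation above yields
\[
\psi(m)=\psi(c_1\cdots c_{k-1})\,c_k=\cdots=\psi(e)\,c_1\cdots c_k=s\,m=\varphi_s(m),
\]
so $\psi=\varphi_s$ and surjectivity follows. The only point requiring care is the bookkeeping of the induction, applying $\psi(xc)=\psi(x)c$ with the correct prefix $x=c_1\cdots c_{k-1}$ at each step, but this is routine. The main conceptual step, and the place where all the content lies, is recognizing that color-preservation forces the multiplicative relation $\psi(xc)=\psi(x)c$; once this is in hand, the generating hypothesis does the rest with no genuine obstacle.
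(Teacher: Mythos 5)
Your proof is correct. The paper does not actually prove this lemma --- it quotes it as a known result from~\cite[Theorem 7.3.7]{Kna-19} --- but your argument, reading the statement (as one must) as asserting $M\cong\End(\Cay_{\mathrm{col}}(M,C))$ via $s\mapsto\varphi_s$, is precisely the standard proof of that cited result: injectivity by evaluating at $e$, and surjectivity because each vertex has a unique outgoing arc of each color, so color-preservation forces $\psi(xc)=\psi(x)c$, which the generating hypothesis propagates from $e$ to all of $M$.
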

 
Results that complement the above lemmas with a converse direction are sometimes called \emph{Sabidussi-type} characterizations. They rely on  certain properties of the automorphism group or endomorphism monoid. The first such result is due to  Sabidussi~\cite[Lemma 4]{Sabidussi1958}: 

\begin{lemma}\label{lem:Sabidussi}
A (colored, directed) graph $G$ is the (colored, directed) Cayley graph of a group if and only if $\Aut(G)$ has a subgroup that acts regularly on $G$.  
\end{lemma}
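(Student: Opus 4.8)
The plan is to prove the two implications separately, in both cases realizing the regular subgroup as the left-regular representation of $\Gamma$. For the forward implication, suppose $G=\Cay(\Gamma,C)$ (or its colored, resp.\ undirected, version) for a group $\Gamma$. I would invoke Lemma~\ref{lem:leftM}, by which left-multiplication $\varphi_g\colon x\mapsto gx$ by an element $g\in\Gamma$ is an endomorphism of $\Cay_{\mathrm{col}}(\Gamma,C)$, hence of $G$ in each of the three settings. Since $\Gamma$ is a group, $\varphi_g$ is a bijection with inverse $\varphi_{g^{-1}}$, so it is in fact an automorphism, and $g\mapsto\varphi_g$ is a group homomorphism $\Gamma\to\Aut(G)$. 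It is injective, as $\varphi_g$ is determined by $\varphi_g(e)=g$, so its image is a subgroup of $\Aut(G)$ isomorphic to $\Gamma$. This subgroup acts regularly: it is transitive since $\varphi_{yx^{-1}}(x)=y$ for all $x,y$, and the stabilizer of any vertex $x$ is trivial since $\varphi_g(x)=x$ forces $gx=x$, i.e.\ $g=e$.

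For the backward implication, suppose $\Gamma\leq\Aut(G)$ acts regularly on the vertex set of $G$. I would fix a base vertex $v_0$ and consider the orbit map $g\mapsto g(v_0)$, which is a bijection $\Gamma\to V(G)$ precisely because the action is regular: transitivity gives surjectivity and triviality of the stabilizers gives injectivity. Using this bijection to identify $V(G)$ with $\Gamma$, I would set $C=\{g\in\Gamma\ |\ (v_0,g(v_0))\text{ is an arc of }G\}$. The heart of the argument is then the claim that this identification is an isomorphism $G\cong\Cay(\Gamma,C)$, which reduces to showing that $(x(v_0),y(v_0))$ is an arc of $G$ if and only if $x^{-1}y\in C$. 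Applying the automorphism $x^{-1}\in\Gamma$, which preserves arcs in both directions, the arc $(x(v_0),y(v_0))$ exists exactly when $(v_0,x^{-1}y(v_0))$ does, and the latter is by definition the condition $x^{-1}y\in C$; since in $\Cay(\Gamma,C)$ an arc $(x,y)$ exists iff $y=xc$ for some $c\in C$, i.e.\ $x^{-1}y\in C$, the two adjacency relations coincide.

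I expect the delicate point to lie in this backward direction, where the whole construction hinges on the elements of $\Gamma$ being graph automorphisms that simultaneously preserve adjacency and act freely and transitively, so that adjacency can be transported along the orbit map without ambiguity. For the colored variant one additionally records, for each $c\in C$, the color of the arc $(v_0,c(v_0))$, and checks that left-translation by $x^{-1}$ preserves it; this holds because the elements of $\Gamma$ are color-preserving automorphisms. The purely directed and the undirected cases are obtained by reading ``arc'' as a directed edge or as an unordered edge throughout.
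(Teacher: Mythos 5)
The paper contains no proof of this lemma: it is quoted as a classical result, with a citation to Lemma~4 of Sabidussi~\cite{Sabidussi1958}, so there is no internal argument to compare yours against. On its own merits, your proof is the standard one and it is correct for the directed and undirected uncolored readings. The forward direction correctly upgrades Lemma~\ref{lem:leftM} using the group structure ($\varphi_g$ is bijective with inverse $\varphi_{g^{-1}}$, transitivity via $\varphi_{yx^{-1}}$, free action via right cancellation), and the backward direction correctly transports adjacency along the orbit map: $(x(v_0),y(v_0))$ is an arc iff $(v_0,(x^{-1}y)(v_0))$ is, iff $x^{-1}y\in C$, which is exactly the adjacency rule of $\Cay(\Gamma,C)$. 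One small point worth making explicit in the undirected case: the set $C=\{g\in\Gamma \mid \{v_0,g(v_0)\}\in E\}$ is automatically closed under inverses (apply the automorphism $g^{-1}$ to the edge $\{v_0,g(v_0)\}$), which is what makes the edge sets of $G$ and $\underline{\Cay}(\Gamma,C)$ agree; your ``read arc as edge'' remark silently relies on this.

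The colored variant, which you dispatch in two lines, hides a genuine subtlety. Your recipe assigns to each $c\in C$ the color of the arc $(v_0,c(v_0))$; for the resulting identification with $\Cay_{\mathrm{col}}(\Gamma,C)$ to be an isomorphism of \emph{colored} graphs, the assignment $c\mapsto\mathrm{color}(v_0,c(v_0))$ must be a bijection between $C$ and the colors of $G$, i.e.\ distinct out-arcs at $v_0$ must carry distinct colors and every color must appear at $v_0$. This is automatic in a colored Cayley graph of a group (each $c\in C$ labels exactly one out-arc at every vertex), but it does \emph{not} follow from the existence of a regular subgroup of color-preserving automorphisms: the digraph on two vertices with both loops and both swap arcs, all of the same color, admits a regular $\mathbb{Z}_2$-action yet is not the colored Cayley graph of any group, since any such graph on two vertices has its loops and non-loops in different color classes. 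So for the colored reading your proof (like the lemma's loose parenthetical statement itself) needs this non-degeneracy of the coloring as an explicit hypothesis or observation; once it is recorded, your argument goes through verbatim.
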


Here we prove a Sabidussi-type characterization of monoid digraphs, that was asked for in~\cite[Chapter 11.3]{Kna-19}.



\begin{lemma}\label{lem:monosabi} A directed graph $G=(V,A)$ is a monoid digraph if and only if there exists a vertex $e\in V$ and a submonoid $M\leq\End(G)$ such that for each $x\in V$ there is a unique $\varphi_x\in M$ with $\varphi_x(e)=x$. Moreover, $\varphi_x$ satisfies that for each $(x,y)\in A$ there is $(e,c)\in A$ such that $\varphi_x(c)=y$.
\end{lemma}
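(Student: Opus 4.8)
The plan is to prove both implications, with the backward direction carrying the real content. For the forward direction, suppose $G=\Cay(M,C)$ for a monoid $M$ with neutral element $e$. By Lemma~\ref{lem:leftM}, together with the observation that $\End(\Cay_{\mathrm{col}}(M,C))\leq\End(\Cay(M,C))=\End(G)$, left-multiplication $s\mapsto\varphi_s$ embeds $M$ into $\End(G)$; I would take $M'$ to be its image, a submonoid of $\End(G)$, and let $e$ be the neutral element of $M$, viewed as a vertex of $G$. Then $\varphi_x(e)=xe=x$ gives existence, and uniqueness is immediate since $\varphi_s(e)=x$ forces $s=se=x$, hence $\varphi_s=\varphi_x$. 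For the moreover clause, any arc $(x,y)\in A$ has $y=xc$ for some $c\in C$; then $(e,c)\in A$ because $ec=c$, and $\varphi_x(c)=xc=y$.

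For the backward direction, I would take the monoid to be $M$ itself under composition and build an isomorphism from $G$ to $\Cay(M,C)$ for a suitable connection set $C$. The first step is to note that $x\mapsto\varphi_x$ is a bijection $V\to M$: it is injective because $\varphi_x=\varphi_y$ implies $x=\varphi_x(e)=\varphi_y(e)=y$, and surjective because any $\psi\in M$ satisfies $\psi=\varphi_{\psi(e)}$ by the uniqueness hypothesis. Under this bijection the vertex $e$ corresponds to $\mathrm{id}_G$, the neutral element of $M$. I would then set $C=\{\varphi_c\ :\ (e,c)\in A\}\subseteq M$ and claim that $x\mapsto\varphi_x$ is a digraph isomorphism from $G$ onto $\Cay(M,C)$.

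The verification splits into the two inclusions of arc sets. One direction is the easy consequence of $\varphi_x$ being an endomorphism: for $\varphi_c\in C$, the composite $\varphi_x\circ\varphi_c$ lies in $M$ and sends $e$ to $\varphi_x(\varphi_c(e))=\varphi_x(c)$, so by uniqueness $\varphi_x\circ\varphi_c=\varphi_{\varphi_x(c)}$; since $(e,c)\in A$ and $\varphi_x$ is an endomorphism, $(x,\varphi_x(c))\in A$, i.e.\ every arc of $\Cay(M,C)$ maps to an arc of $G$. The reverse inclusion is where the moreover hypothesis is essential: given an arc $(x,y)\in A$, it provides $(e,c)\in A$ with $\varphi_x(c)=y$, whence $\varphi_c\in C$ and $\varphi_x\circ\varphi_c=\varphi_y$ realizes $(x,y)$ as an arc of $\Cay(M,C)$. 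I expect this last point---translating the moreover condition into surjectivity on arcs---to be the main obstacle, since the endomorphism property alone yields only one inclusion; the moreover clause is precisely the extra input guaranteeing that $G$ carries no arcs beyond those forced by the out-neighbours of $e$.
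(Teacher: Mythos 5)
Your proof is correct and takes essentially the same approach as the paper: the forward direction via Lemma~\ref{lem:leftM} with left multiplications and the neutral element as $e$, and the backward direction by setting $C=\{\varphi_c\ :\ (e,c)\in A\}$ and checking that $x\mapsto\varphi_x$ is an isomorphism onto $\Cay(M,C)$, invoking the moreover clause exactly where the paper does (to show every arc of $G$ is realized in $\Cay(M,C)$). The identification $\varphi_x\circ\varphi_c=\varphi_{\varphi_x(c)}$ via the uniqueness hypothesis is the same key step as in the paper's argument.
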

\begin{proof}
Suppose that there is a vertex $e$ and a submonoid  $M\leq\End(G)$ satisfying this property. Let $C=\{\varphi\in M\ |\ (e,\varphi(e))\in A\}$. We claim that there is an isomorphism $G\cong\Cay(M,C)$ given by $x\mapsto\varphi_x$. It is clear that this map is injective. It is also surjective: the preimage of $\varphi\in M$ is $\varphi(e)$. Now, if $(x,y)\in A$, there is an out-neighbor $c$ of $e$ with $\varphi_x\circ\varphi_c(e)=\varphi_x(c)=y$; thus $\varphi_x\circ\varphi_c=\varphi_y$. Since $\varphi_c\in C$, $(\varphi_x,\varphi_y)$ is an arc of $\Cay(M,C)$. Conversely, if $(\varphi_x,\varphi_y)$ is an arc of $\Cay(M,C)$ then $\varphi_x\circ\varphi=\varphi_y$ for some $\varphi\in C$. This implies that the arc $(e,\varphi(e))\in A$ is mapped by $\varphi_x$ to $(x,\varphi_x\circ\varphi(e))=(x,y)$, which must be also in $A$, because $\varphi_x$ is an endomorphism.

Conversely, suppose that $G=\Cay(M,C)$ for a monoid $M$ and $C\subseteq M$. For each vertex $x$ consider the endomorphism $\varphi_x$ of $G$ defined by left-multiplication by $x$. All these endomorphisms together form a submonoid $M'\leq\End(\Cay_{\mathrm{col}}(M,C))\leq\End(G)$, by Lemma~\ref{lem:leftM}. Let $e$ be the neutral element of $M$. It is clear that for any vertex $x$, the mapping $\varphi_x$ is the only of them that maps $e$ to $x$. Moreover, for any out-neighbor $y$ of $x$ there is an out-neighbor $c\in C$ of $e$ with $xc=y$, so $\varphi_x(c)=y$. 
\end{proof}

We end the section with a lemma which shows how the connectivity of a semigroup digraph yields information about its defining semigroup(s). A directed graph is \emph{(strongly) connected} if for any two vertices $x,y$ there is a (directed) path from $x$ to $y$. A set of arcs $A'\subseteq A$ of a connected directed graph $G=(V,A)$ is a \emph{directed edge cut} if $(V,A\setminus A')$ has exactly two connected components $V_1$ and $V_2$ and all arcs of $A'$ are directed from $V_1$ to $V_2$. It is an easy observation, that a connected directed graph $G=(V,A)$ is strongly connected if and only if it has no directed edge cut.  

\begin{lemma}\label{lem:strongconn} If  $G=\Cay(S,C)$ is strongly connected, then $S$ is left cancellative.
\end{lemma}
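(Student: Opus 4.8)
The plan is to prove left cancellativity by showing that for every $a\in S$ the left translation $\lambda_a\colon S\to S$, $s\mapsto as$, is injective; this is exactly the assertion that $as=as'$ implies $s=s'$. Since we work with finite semigroups and a self-map of a finite set is injective if and only if it is surjective, it suffices to prove that each $\lambda_a$ is \emph{surjective}, i.e.\ that $aS=S$. (Note that $\lambda_a$ is precisely the left-multiplication endomorphism $\varphi_a$ of Lemma~\ref{lem:leftM}, although for this argument we only use it as a set map.)

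The key step is to extract $aS=S$ from strong connectivity by reading off the arcs of $\Cay(S,C)$. A directed walk starting at a vertex $a$ visits exactly the vertices of the form $ac_1\cdots c_k$ with $k\ge 1$ and $c_i\in C$; writing $C^{+}$ for the set of all such nonempty products over $C$, the vertices reachable from $a$ by a nontrivial directed path are exactly those in $aC^{+}$, together with $a$ itself via the trivial path. Strong connectivity says every vertex of $S$ is reachable from $a$, so $S=\{a\}\cup aC^{+}$. I would then observe that, when $|S|\ge 2$, the vertex $a$ lies on a directed closed walk: picking any $b\neq a$ and concatenating a directed path from $a$ to $b$ with one from $b$ back to $a$ yields a nonempty closed walk, whence $a\in aC^{+}$. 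Combining the two facts gives $S=aC^{+}\subseteq aS\subseteq S$, so $aS=S$.

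With $aS=S$ established for every $a\in S$, finiteness makes surjectivity of $\lambda_a$ equivalent to injectivity, which is exactly left cancellativity; the case $|S|=1$ is trivial. The argument has no real obstacle beyond making the reachability bookkeeping precise — the one point to treat with care is the closed-walk observation (so that $a\in aC^{+}$, not merely $a\in\{a\}\cup aC^{+}$), since this is where strong connectivity, rather than mere reachability from $a$, is genuinely used.
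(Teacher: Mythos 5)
Your proof is correct, and its overall skeleton coincides with the paper's: both arguments establish that every left translation $\lambda_a\colon s\mapsto as$ is surjective ($aS=S$) and then use finiteness of $S$ to convert surjectivity into injectivity, which is left cancellativity. Where you differ is in how strong connectivity is made to yield $aS=S$. The paper observes that $sS$ is closed under out-neighbors (since $sSC\subseteq sS$), so if $sS\subsetneq S$ the arcs from $S\setminus sS$ into $sS$ would form a directed edge cut, contradicting the observation, stated just before the lemma, that a connected digraph is strongly connected if and only if it has no directed edge cut. You instead trace reachability from $a$ directly: every vertex reachable by a nontrivial walk lies in $aC^{+}$, strong connectivity gives $S=\{a\}\cup aC^{+}$, and a closed walk through $a$ (which exists when $|S|\geq 2$) upgrades this to $S=aC^{+}\subseteq aS$. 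Your route is slightly more elementary, since it bypasses the edge-cut characterization entirely, at the cost of the extra closed-walk step and the $|S|=1$ case split; the paper's ``no outgoing arcs'' argument avoids both, because it never needs $a\in aS$ specifically, only that $sS$ absorbs all arcs leaving it. Both proofs rely on finiteness in the same essential way, so neither is more general than the other.
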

\begin{proof}
For any $s\in S$ and any $X\subseteq S$ with $XC\subseteq X$ we have that $sXC\subseteq sX$. In particular, $sS$ has no outgoing arcs. Hence if $sS\subsetneq S$, then there is a directed edge cut defined by the arcs from $S\setminus sS$ to $sS$. Thus, strong connectivity yields $sS=S$. By Lemma~\ref{lem:leftM} and the fact that $S$ is finite, left-multiplication by $s$ is an automorphism of $G$. This means that $S$ is left cancellative.
\end{proof}

\section{Semigroup digraphs and monoid digraphs}\label{sec:digraphs}

In this section we study semigroup and monoid digraphs. 
If for a directed graph $G$ there is a semigroup $S$ and a non-empty $C\subseteq S$ with $G=\Cay(S,C)$, then the outdegree of any vertex of $G$ is between $1$ and $|C|$. In the \emph{$1$-outregular} case, i.e.~when all vertices have outdegree $1$, one can reduce $C$ to any of its members without changing $G$:

\begin{obs}\label{obs:1}
 A semigroup digraph $G=\Cay(S,C)$ is $1$-outregular if and only if  $G=\Cay(S,\{a\})$ for some $a\in C$.
\end{obs}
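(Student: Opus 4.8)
The plan is to prove both implications directly from the definition of the arc set of $\Cay(S,C)$, namely $\{(s,sc)\mid s\in S,\,c\in C\}$, combined with the single observation that the out-degree of a vertex $s$ equals the number of \emph{distinct} elements of the set $\{sc\mid c\in C\}$. No automorphism or endomorphism machinery is needed here; everything should follow formally from the arc-set description.

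For the backward implication I would note that if $G=\Cay(S,\{a\})$ for some $a\in C$, then each vertex $s$ has the single out-neighbor $sa$, so $G$ is $1$-outregular by inspection. For the forward implication I would assume $G$ is $1$-outregular and first observe that this forces $C\neq\emptyset$, so a choice of some $a\in C$ exists. Since every vertex $s$ has out-degree exactly $1$, the set $\{sc\mid c\in C\}$ is a singleton for each $s$, which means $sc=sc'$ for all $c,c'\in C$ and all $s\in S$. Fixing $a\in C$, I would then show the two arc sets coincide: the inclusion $\Cay(S,\{a\})\subseteq\Cay(S,C)$ is immediate from $a\in C$, and for the reverse inclusion any arc $(s,sc)$ of $\Cay(S,C)$ satisfies $sc=sa$ by the singleton property, hence $(s,sc)=(s,sa)$ is an arc of $\Cay(S,\{a\})$. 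Since both graphs have vertex set $S$, this gives $G=\Cay(S,\{a\})$.

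The only point requiring a little care is the non-emptiness of $C$, which is exactly what guarantees that a valid $a$ can be selected; beyond that there is no genuine obstacle, as the equivalence is a purely formal consequence of the definition of $\Cay(S,C)$ together with the fact that out-degree $1$ collapses all products $sc$ ($c\in C$) to a common value for each fixed $s$.
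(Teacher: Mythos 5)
Your proof is correct and follows exactly the reasoning the paper leaves implicit: Observation~\ref{obs:1} is stated without proof, justified only by the preceding remark that outdegree~$1$ lets one ``reduce $C$ to any of its members without changing $G$,'' which is precisely your singleton-collapse argument $sc=sa$ for all $c\in C$. Your extra care about $C\neq\emptyset$ is a sensible touch and consistent with the paper's standing assumption that connection sets are non-empty.
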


The $1$-outregular semigroup digraphs were studied by Zelinka~\cite{Zelinka1981}. He gave a very concrete characterization of this class, but his construction of the corresponding semigroups is flawed. However, an alternative non-trivial construction can be given inspired by \cite[Proposition 11.3.2]{Kna-19}. This is the first objective of this section. We give a monoid version of the result (Theorem~\ref{Monoid_Zelinka}), and then obtain Zelinka's theorem as a corollary (Corollary~\ref{Zelinka}).

In a \emph{$1$-outregular} digraph $G$ each connected component $\mathcal C$ has exactly one cycle (possibly a loop) $Z$. All vertices in $\mathcal C$ have a unique shortest directed path to $Z$. For $v\in\mathcal C$ denote by $\ell(v)$ the length of this path and by $\ell(\mathcal C)$ the maximum among all these lengths. Moreover, denote by $z(\mathcal C)$ the length of $Z$.

\begin{theorem}\label{Monoid_Zelinka} A $1$-outregular digraph $G$ is a monoid digraph if and only if $G$ has a component $\mathcal C$ such that $z(\mathcal D)$ divides $z(\mathcal C)$ and $\ell(\mathcal D)\leq\ell(\mathcal C)$ for all components $\mathcal D$ of $G$.
\end{theorem}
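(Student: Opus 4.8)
The plan is to prove both implications by working with the \emph{functional-graph} viewpoint of $1$-outregular digraphs. By Observation~\ref{obs:1}, a $1$-outregular (semi)group digraph may be written as $\Cay(S,\{a\})$, so the out-neighbor function $f\colon V\to V$, $f(x)=xa$, determines $G$, and $G$ is the functional graph of $f$; I write $f^k$ for its iterates. I will use repeatedly two elementary facts about an endomorphism $\psi$ of a functional graph: (i) $\psi$ does not increase the distance $\ell(\cdot)$ to the cycle, since a directed path from $v$ to a cyclic vertex maps to a directed walk of the same length ending at a cyclic vertex; and (ii) $\psi$ maps cyclic vertices to cyclic vertices and, as $f^p(u)=u$ forces $f^p(\psi(u))=\psi(u)$, the period of $\psi(u)$ divides that of $u$.

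For the forward direction, assume $G=\Cay(M,\{a\})$ with neutral element $e$, and let $\varphi_x$ denote left-multiplication by $x$, which by Lemma~\ref{lem:leftM} is an endomorphism of $G$ with $\varphi_x(e)=xe=x$. I claim the component $\mathcal C$ of $e$ is the one required. Indeed, for every vertex $w$ we have $w=\varphi_w(e)$, so by (i) $\ell(w)=\ell(\varphi_w(e))\le\ell(e)$; hence $\ell(e)$ is the maximal tail-length, giving $\ell(\mathcal D)\le\ell(\mathcal C)$ for all components $\mathcal D$. For the cycle lengths, fix $\mathcal D$ and a vertex $w\in\mathcal D$; then $\varphi_w$ sends the unique cycle $Z$ of $\mathcal C$ into the component of $w$, and $\varphi_w(Z)$ is a nonempty $f$-invariant set of cyclic vertices of $\mathcal D$, hence equals the unique cycle of $\mathcal D$. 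By (ii) its length $z(\mathcal D)$ divides $z(\mathcal C)$. This establishes the stated condition.

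For the converse, I would start from a dominant component $\mathcal C$ with $N:=z(\mathcal C)$ and $L:=\ell(\mathcal C)$, pick a vertex $e\in\mathcal C$ of maximal depth $\ell(e)=L$, and set $a:=f(e)$. The depth hypothesis guarantees that $f^{L}$ sends every vertex onto its cycle, while divisibility guarantees that $f^{N}$ fixes every cyclic vertex; together these give the identity $f^{L+N}=f^{L}$ on all of $V$, so that $f^{0},\dots,f^{L+N-1}$ are distinct endomorphisms forming a monogenic submonoid of $\End(G)$ whose orbit of $e$ is exactly the \emph{spine} $\{e,a,\dots,a^{L+N-1}\}$ of $\mathcal C$. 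To finish via Lemma~\ref{lem:monosabi}, it remains to enlarge this to a family $\{\varphi_x\}_{x\in V}\le\End(G)$, closed under composition, with $\varphi_x(e)=x$ for each vertex $x$; the arc condition of that lemma then holds automatically, because $\varphi_x(a)=f(\varphi_x(e))=f(x)$.

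The main obstacle is precisely the construction of this coherent family of endomorphisms for the vertices \emph{off} the spine --- and, in the presence of several components, across all of them --- together with the verification that the induced multiplication $x\cdot y:=\varphi_x(y)$ is associative, i.e.\ that $\varphi_{\varphi_x(y)}=\varphi_x\circ\varphi_y$. This is exactly the point at which Zelinka's original argument breaks down. The idea I would follow, in the spirit of \cite[Proposition 11.3.2]{Kna-19}, is to use divisibility to map the length-$N$ cycle of $\mathcal C$ onto the shorter cycles of the other components, and to use the depth bound $\ell(\mathcal D)\le L$ to place the tails hanging off the spine, thereby defining each $\varphi_x$ level by level and checking that distinct vertices receive distinct, composition-closed endomorphisms, so that $x\mapsto\varphi_x$ is the desired bijection onto a submonoid of $\End(G)$.
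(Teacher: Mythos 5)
Your forward direction is correct: using that left-multiplications are endomorphisms that commute with the out-neighbor map $f$, you get $\ell(w)=\ell(\varphi_w(e))\le\ell(e)$ for every vertex $w$, and that $\varphi_w$ carries the cycle of the component of $e$ onto the cycle of the component of $w$ with period dividing. This is a sound, slightly more structural variant of the paper's argument, which instead works directly with the pre-period $k$ and period $h$ of $a$ and derives contradictions from $xa^{k}=xa^{k+h}$. The problem is the converse, and it is exactly the gap you flag yourself: after choosing $e$ of maximal depth in $\mathcal C$, setting $a=f(e)$, and observing $f^{L+N}=f^{L}$, you state that it ``remains'' to extend $\{f^{0},\dots,f^{L+N-1}\}$ to a composition-closed family $\{\varphi_x\}_{x\in V}$ with $\varphi_x(e)=x$, defined ``level by level.'' That extension, together with the associativity of the induced multiplication, is the entire mathematical content of this direction -- it is precisely the step at which Zelinka's construction fails, which is why the paper exists -- and your proposal does not supply it. Moreover, the direction you sketch (having $\varphi_x$ map the cycle and tails of $\mathcal C$ onto corresponding pieces of the other components, for every $x$) leaves unspecified what $\varphi_x$ does on the components other than $\mathcal C$, and making all these choices simultaneously compatible with closure under composition is the genuinely hard part.

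For comparison, the paper's construction resolves this with a trick your sketch is missing: no vertex outside $\mathcal C$ needs to be moved at all. Writing $x+k$ for the endpoint of the directed walk of length $k$ from $x$, one fixes a vertex $\omega$ on the cycle of $\mathcal C$ and sets $r(y)=d(e,\omega)-d(y,\omega)$ for $y\in\mathcal C$; the multiplication is then $ey=y$ for all $y$, and for $x\neq e$, $xy=x+r(y)$ if $y\in\mathcal C$ while $xy=y$ if $y\notin\mathcal C$. Thus every vertex outside $\mathcal C$ is a right zero, and left multiplication by any $x\neq e$ collapses all of $\mathcal C$ onto the forward orbit of $x$ and fixes everything else pointwise. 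Associativity then reduces to a single synchronization claim, namely $d(x,x+k)=d\bigl(x,x+d(e,e+k)\bigr)$ for all $x\in V$ and $k\in\mathbb N$, and it is exactly in the proof of this claim that both hypotheses enter: $z(x)\mid z(e)$ (divisibility) and $\ell(x)\le\ell(e)$ (maximality of the depth of $e$). Note also that the paper does not route the converse through Lemma~\ref{lem:monosabi} at all; it builds the monoid directly and checks that the arcs are $(x,xa)$. Until you either carry out a construction of this kind or produce your own composition-closed family with a verified associative product, the ``if'' direction of your proof is incomplete.
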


\begin{proof} By Observation~\ref{obs:1}, if $G=(V,A)$ is a monoid digraph, then $G=\Cay(M,\{a\})$ for some monoid $M$ and $a\in M$. Let $k$ and $h$ be the \emph{pre-period} and the \emph{period} of $a$, respectively. This is, $e,a,...,a^{k+h-1}$ are pairwise distinct and $a^{k+h}=a^k$. Hence, the component $\mathcal C$ containing $a$ satisfies $z(\mathcal C)=h$ and $\ell(\mathcal C)\geq k$. Now let $\mathcal D$ be any component of $G$, $x\in\mathcal D$ at distance $p=\ell(\mathcal D)$ from the cycle of $\mathcal D$, and $q=z(\mathcal D)$. If $q$ does not divide $h$, then $k$ and $k+h$ are not congruent modulo $q$ and thus $xa^{k}\neq xa^{k+h}$, a contradiction. Hence $z(\mathcal D)$ divides $z(\mathcal C)$. Now suppose $p\geq k+1$. Then $xa^k$ is distinct from $xa^l$ for each $l\neq k$, but this is also a contradiction. Hence $p\leq k\leq\ell(\mathcal C)$.

Now suppose that the condition is fulfilled. For $x,y\in V$  denote by $d(x,y)$ the length of the shortest directed path from $x$ to $y$, if it exists. Moreover, for $x\in \mathcal{D}$ denote $z(x):=z(\mathcal{D})$. In $\mathcal C$ choose a vertex at distance $\ell(\mathcal C)$ from the cycle $Z\subseteq\mathcal C$. Conveniently call this vertex $e$ and call its out-neighbor $a$. For any vertex $x\in V$ and any $k\in\mathbb N$, define $x+k$ as the end vertex of the directed walk of length $k$ starting at $x$.  It is clear that for any $k,\ell\in\mathbb N$ the vertices $x+k,x+\ell$ are equal if and only if $d(x,x+k)=d(x,x+\ell)$.

\begin{claim}\label{claim}
For every $x\in V$ and every $k\in\mathbb N$ we have $d(x,x+k)=d(x,x+d(e,e+k))$.
\end{claim}
\begin{proof}
Observe that 
\[d(x,x+k)=\begin{cases}k &\text{if } k<\ell(x)\\ 
\ell(x)+((k-\ell(x))\!\!\!\mod z(x)) &\text{otherwise,}\end{cases}\]
where $(k-\ell(x))\!\mod z(x)$ is the remainder of the Euclidean division of $k-\ell(x)$ by $z(x)$. Now we distinguish two cases. If $k<\ell(e)$ then $d(e,e+k)=k$. If otherwise $k\geq\ell(e)$, write $k=\ell(e)+z(e)q_k+r_k$, where $q_k,r_k$ are the integers determined by $0\leq r_k<z(e)$. By hypothesis, $k\geq\ell(e)\geq\ell(x)$ and $z(x)\mid z(e)$, so $d(x,x+d(e,e+k))=d(x,x+\ell(e)+r_k)=d(x,x+\ell(e)+z(e)q_k+r_k)=d(x,x+k)$.
\end{proof}

Now, let $\omega=e+(\ell(\mathcal C)+z(\mathcal C)-1)$. Observe that $\omega\in Z\subseteq\mathcal C$. Therefore $\omega$ can be reached from any vertex $v$ of $\mathcal C$, so we can set $r(v):=d(e,\omega)-d(v,\omega)$. Note that we have $d(v,\omega)=d(e+r(v),\omega)$. This implies that $d(v+k,\omega)=d(e+r(v)+k,\omega)$ for any $k\in\mathbb N$, so $r(v+k)=r(e+r(v)+k)=d(e,e+r(v)+k)$ for any $k\in\mathbb N$. We regard the vertices of $G$ as the elements of a monoid $M$ equipped with the following multiplication:
\begin{align*}
\forall y\in V \ \ &ey=y; \\
\forall x\in V\!\setminus\!\{e\}\ \;\forall y\in V \ \ &xy=\begin{cases}x+r(y) &\text{if }y\in \mathcal C\\y &\text{otherwise.}\end{cases}
\end{align*}
By definition, $e$ is the right-neutral element, but also $xe=x+r(e)=x$. Hence $e$ is the neutral element of this operation. Let us check that the operation is associative. Pick any $x,y,z\in V$. If $y\notin\mathcal{C}$, then $yz\notin\mathcal{C}$ and $(xy)z=yz=x(yz)$. If $z\notin\mathcal{C}$ then $(xy)z=z=x(yz)$. So suppose  $y,z\in\mathcal C$. Then using the above Claim we can transform, $(xy)z=(x+r(y))+r(z)=x+d(x,x+r(y)+r(z))=x+d(x,x+d(e,e+r(y)+r(z)))=x+d(e,e+r(y)+r(z))=x+r(y+r(z))=x+r(yz)=x(yz)$. Finally, note that $(x,y)$ is an arc of $G$ if and only if $y=x+1=xa$. Therefore, $G=\Cay(M,\{a\})$.
\end{proof}

\begin{corol}[Zelinka's Theorem]\label{Zelinka} 
A $1$-outregular digraph $G$ is a semigroup digraph if and only $G$ has a component $\mathcal C$ such that $z(\mathcal D)$ divides $z(\mathcal C)$ and $\ell(\mathcal D)\leq\ell(\mathcal C)+1$ for all components $\mathcal D$ of $G$.
\end{corol}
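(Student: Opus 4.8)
The plan is to derive the semigroup statement from the monoid version (Theorem~\ref{Monoid_Zelinka}) by passing back and forth between a semigroup and the monoid obtained by adjoining an identity. The observation driving everything is that adjoining an identity $e$ adds, in the digraph $\Cay(\cdot,\{a\})$, a single new vertex $e$ whose only arc is $e\to a$; this vertex is a \emph{source}, and it lengthens the tail of the component containing $a$ by at most one. This extra $+1$ in the depth is precisely the gap between the monoid condition $\ell(\mathcal D)\le\ell(\mathcal C)$ and the semigroup condition $\ell(\mathcal D)\le\ell(\mathcal C)+1$. By Observation~\ref{obs:1} I may assume throughout that the connection set is a single element $a$.

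For the forward direction, suppose $G=\Cay(S,\{a\})$. I would consider $S^1$, the monoid obtained from $S$ by adjoining an identity $e$ (with $S^1=S$ if $S$ already has one). Then $\Cay(S^1,\{a\})$ is a $1$-outregular monoid digraph equal to $G$ together with at most the extra source $e\to a$. Applying Theorem~\ref{Monoid_Zelinka} to $\Cay(S^1,\{a\})$ yields a witness component satisfying the monoid condition there, which I then translate back to $G$. Since adjoining $e$ leaves all cycle lengths unchanged, leaves the depth of every component other than the one containing $a$ unchanged, and increases the depth of that one component by at most one, the monoid inequality $\ell(\mathcal D)\le\ell(\mathcal C)$ in $\Cay(S^1,\{a\})$ becomes exactly $\ell(\mathcal D)\le\ell(\mathcal C)+1$ in $G$.

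For the backward direction, assume $G$ satisfies the stated condition with witness component $\mathcal C$. I would \emph{add} a fresh source vertex $e'$ with a single arc $e'\to v$, where $v$ is a deepest vertex of $\mathcal C$ (at distance $\ell(\mathcal C)$ from its cycle). The resulting $1$-outregular digraph $G^+$ has the same cycle lengths, and its component $\mathcal C^+\supseteq\mathcal C$ now has depth $\ell(\mathcal C)+1$ with $e'$ as its \emph{unique} deepest vertex; hence every component $\mathcal D$ of $G^+$ satisfies $z(\mathcal D)\mid z(\mathcal C^+)$ and $\ell(\mathcal D)\le\ell(\mathcal C^+)$, i.e.\ $G^+$ meets the monoid condition. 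By Theorem~\ref{Monoid_Zelinka} (whose proof is free to place the identity at the unique deepest vertex of the witness component) I obtain a monoid $M$ with $G^+=\Cay(M,\{a\})$ whose identity is $e'$. I then claim $S:=M\setminus\{e'\}$ is a subsemigroup with $\Cay(S,\{a\})=G$.

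The main obstacle is verifying this last claim, namely closure of $S$. Using the explicit multiplication from the proof of Theorem~\ref{Monoid_Zelinka}, a product $xy$ of two non-identity elements is either $y$ (when $y\notin\mathcal C^+$) or the vertex $x+r(y)$ reached from $x$ by a directed walk; in either case the result is a vertex reachable along an arc, whereas $e'$ is a source in $G^+$ and so is reachable from no vertex but itself. Thus $xy\neq e'$, giving closure, and associativity is inherited from $M$. Finally, since $e'$ is a source, deleting it removes no arc among the remaining vertices, and the out-neighbour of each $s\in S$ in $\Cay(M,\{a\})$ already lies in $S$, so $\Cay(S,\{a\})$ is exactly $G$. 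The only delicate points are that $e'$ really is the unique deepest vertex of $\mathcal C^+$ (so the identity can be placed there) and that sources cannot arise as nontrivial products, both of which follow from $e'$ having in-degree $0$.
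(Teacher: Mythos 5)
Your proposal is correct and takes essentially the same route as the paper's proof: in both, the forward direction adjoins an identity to $S$ and translates the monoid condition of Theorem~\ref{Monoid_Zelinka} back with the $+1$, and the backward direction attaches a fresh source $e'$ to a deepest vertex of $\mathcal C$, runs the theorem's construction with the identity placed at that deepest vertex, and uses the fact that $e'$ has no in-neighbors to check that the non-identity elements are closed under the constructed product. The only cosmetic slip is your phrase that every product of non-identity elements is ``reachable along an arc'' (false when $xy=y$ for a source $y$ in another component, or when $r(y)=0$), but in those cases $xy$ equals one of the factors, so $xy\neq e'$ still holds and the closure argument goes through exactly as in the paper.
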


\begin{proof} Again by Observation~\ref{obs:1} we can suppose that $G=\Cay(S,\{a\})$ for a semigroup $S$ and $a\in S$. Let $\mathcal C$ be the connected component of $a$ and $S'$ the monoid obtained from $S$ by adjoining a neutral element $e$. We can establish a bijective correspondence $\mathcal D\mapsto \mathcal D'$ between connected components of $G$ and $\Cay(S',\{a\})$ as follows: $\mathcal D'=\mathcal D$ if $\mathcal D\neq \mathcal C$, and $\mathcal C'$ is obtained from $\mathcal C$ by adding the vertex $e$ and the arc $(e,a)$. Observe that $z(\mathcal C')=z(\mathcal C)$ and $\ell(\mathcal C')\leq\ell(\mathcal C)+1$. By Theorem~\ref{Monoid_Zelinka}, $z(\mathcal D)\mid z(\mathcal C)$ and $\ell(\mathcal D)\leq\ell(\mathcal C)+1$ for any connected component $\mathcal D$ of $G$.

Now suppose that $G=(V,A)$ fulfills the condition. Pick a vertex $v$ in $\mathcal C$ with $\ell(v)=\ell(\mathcal C)$, and from $G$ construct a new $1$-outregular graph $G'$ by adding a new vertex $u$ and the arc $(u,v)$. By Theorem~\ref{Monoid_Zelinka}, $G'$ is isomorphic to $\Cay(M,\{a\})$ for some monoid $M$ and some $a\in M$. It follows from its proof that $u$ can be assumed to be the neutral element $e$ and $v$ to be $a$. Additionally, since $e$ has no in-neighbors, it can be checked that $M\setminus\{e\}$ is closed under the constructed product. Therefore, it is a semigroup, and $G$ is isomorphic to $\Cay(M\!\setminus\!\{e\},\{a\})$.
\end{proof}

The following has been claimed before~\cite[Proposition 11.3.2]{Kna-19}. However, also in this proof there is a slight problem in the monoid construction. Here we get it as a direct consequence of Theorem~\ref{Monoid_Zelinka}:
\begin{corol}\label{cor:connected}
 Any connected $1$-outregular digraph is a monoid digraph.
\end{corol}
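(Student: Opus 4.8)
The plan is to derive Corollary~\ref{cor:connected} directly from Theorem~\ref{Monoid_Zelinka} by checking its hypothesis in the special case of a connected digraph. The point is that Theorem~\ref{Monoid_Zelinka} characterizes $1$-outregular monoid digraphs by the existence of a \emph{dominant} component $\mathcal{C}$ --- one whose cycle length $z(\mathcal{C})$ is a common multiple of all the $z(\mathcal{D})$ and whose depth $\ell(\mathcal{C})$ is maximal among all components. When the digraph is connected there is only one component, so the divisibility and inequality conditions become trivialities.

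Concretely, let $G$ be a connected $1$-outregular digraph. Since $G$ is connected, it has a single connected component, which we take to be $\mathcal{C}$. The only component $\mathcal{D}$ of $G$ is $\mathcal{C}$ itself, so the required conditions read $z(\mathcal{C}) \mid z(\mathcal{C})$ and $\ell(\mathcal{C}) \leq \ell(\mathcal{C})$, both of which hold trivially. By Theorem~\ref{Monoid_Zelinka}, $G$ is therefore a monoid digraph.

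There is essentially no obstacle here: the whole content has been front-loaded into Theorem~\ref{Monoid_Zelinka}, and the corollary is the degenerate case of a single component. The only thing to verify is that a connected $1$-outregular digraph genuinely has a well-defined component $\mathcal{C}$ with a cycle and finite parameters $z(\mathcal{C})$ and $\ell(\mathcal{C})$; but this was already recorded in the paragraph preceding the theorem, where it is noted that each connected component of a $1$-outregular digraph contains exactly one cycle and that every vertex has a unique shortest directed path to it. Hence the proof is a one-line application, and I would write it as such.
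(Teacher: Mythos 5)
Your proposal is correct and matches the paper exactly: the paper itself presents Corollary~\ref{cor:connected} as a direct consequence of Theorem~\ref{Monoid_Zelinka}, with the conditions $z(\mathcal{C})\mid z(\mathcal{C})$ and $\ell(\mathcal{C})\leq\ell(\mathcal{C})$ holding trivially for the unique component of a connected digraph. Nothing further is needed.
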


%

It was asked in~\cite[below Proposition 11.3.2]{Kna-19} if Corollary~\ref{cor:connected} could be extended to $k$-outregular semigroup digraphs. Here we give a negative answer in a strong sense. A subset of vertices $V'$ of a (strongly) connected (directed) graph $G$ is called \emph{(directed) vertex cut} if $G\setminus V'$ is not (strongly) connected. If $\kappa$ is an integer such that all (directed) vertex cuts of $G$ are of order at least $\kappa$ and $\kappa+1\leq n$, where $n$ is the order of $G$, then $G$ is \emph{(strongly) $\kappa$-connected}. 

\begin{theorem}\label{nonsemigroup_family}
For every $k>1$ there exist infinitely many strongly connected $k$-outregular non-semigroup digraphs. Moreover, for every $\kappa>0$ with $\left\lfloor\frac{k}{\kappa}\right\rfloor>1$ there are such digraphs that are strongly $\kappa$-connected, and whose underlying undirected graph is $(k+\kappa)$-connected.
For $\left\lfloor\frac{k}{\kappa}\right\rfloor>2$ such digraphs exist that cannot even be turned into semigroup digraphs by adding loops.
\end{theorem}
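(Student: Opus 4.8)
The plan is to construct explicit digraphs and verify they fail to be semigroup digraphs by exploiting the algebraic constraints forced by Lemma~\ref{lem:leftM}. The key structural obstacle in a semigroup digraph $G=\Cay(S,C)$ is that left-multiplication by any $s\in S$ is an endomorphism; in particular, for each vertex $s$ and each $c\in C$ there is exactly one arc of ``color'' $c$ leaving $s$, and these colorings must be globally consistent. The first step is to identify an invariant: I would consider the \emph{outdegree} together with how out-neighborhoods interact under the semigroup action. Since $G$ is $k$-outregular, if $G=\Cay(S,C)$ then $|C|\geq k$, but after suppressing multiplicities the effective connection set may collapse; the crux is that the identity (or at least some idempotent / left identity coming from Lemma~\ref{lem:strongconn}) forces $e$ to have out-neighbors $\{ec : c\in C\}=C$, so $C$ literally \emph{is} a set of vertices, namely the out-neighborhood of a distinguished vertex $e$.

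The second step is to leverage strong connectivity: by Lemma~\ref{lem:strongconn}, a strongly connected semigroup digraph forces $S$ to be left cancellative, hence (being finite) left-multiplication by every element is a bijection, so every $\varphi_s$ is an \emph{automorphism}. This is the main leverage point. It means a strongly connected $k$-outregular semigroup digraph must admit a vertex-transitive action (the left-regular action) by automorphisms, and moreover $S$ itself is a group-like object acting regularly. I would then design the target digraphs to be vertex-transitive enough to be plausible Cayley-like graphs yet to violate a divisibility or girth/expansion constraint that any regular semigroup action would impose. Concretely, I expect the construction to take a base digraph built from a Cayley graph of a group and to \emph{perturb} it — e.g. by a local asymmetry or an incompatible mixing of two ``colors'' — so that no global consistent coloring by a left-cancellative semigroup can exist, while keeping $k$-outregularity and high connectivity intact. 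For the high-connectivity and Ramanujan-style variants referenced in the introduction, I would take $G$ to be (an orientation of) an expander so that the underlying graph automatically has large connectivity, and argue that the expansion/girth is incompatible with the rigid coloring structure of $\Cay(S,C)$.

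The third step is to handle the three successively stronger conclusions separately. For the bare existence of strongly connected $k$-outregular non-semigroup digraphs, a single counting or parity obstruction on the consistent arc-coloring suffices. For strong $\kappa$-connectivity with $\lfloor k/\kappa\rfloor>1$ and $(k+\kappa)$-connected underlying graph, I would take a suitable blow-up or tensor/lexicographic product of a small obstruction with a highly connected gadget, tracking both the directed and undirected connectivity through standard product bounds while preserving the coloring obstruction. For the final claim under $\lfloor k/\kappa\rfloor>2$ — that the digraph cannot be rescued into a semigroup digraph even by \emph{adding loops} — I would note that adding loops only enlarges $C$ by idempotent-type colors and adds fixed arcs at each vertex; the obstruction must therefore be phrased purely in terms of the non-loop arcs and their forced coloring, so I would strengthen the invariant to one insensitive to loop additions (for instance, a constraint on the multiset of out-neighbor \emph{pairs} rather than raw outdegree).

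The hard part will be isolating the right combinatorial invariant that is simultaneously (i) forced by left-cancellativity / the automorphism action, (ii) violated by an explicit, scalable family, and (iii) robust under adding loops for the strongest conclusion. The routine parts — verifying $k$-outregularity, computing the directed and undirected connectivity of the chosen product construction, and confirming the family is infinite — I expect to be mechanical; the genuine obstacle is proving the \emph{non-existence} of any defining semigroup, which requires ruling out all possible colorings rather than exhibiting one, and doing so uniformly across the whole infinite family.
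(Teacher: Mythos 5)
Your proposal has a genuine gap, and it is located exactly where you yourself flag ``the hard part'': you never exhibit a construction nor the invariant that kills all semigroup representations, which is the entire content of the theorem. Worse, the structural conclusion you draw from Lemma~\ref{lem:strongconn} is false. Left cancellativity does make every left multiplication an automorphism (injective endomorphism of a finite digraph), but it does \emph{not} make the left-regular action vertex-transitive, nor $S$ ``a group-like object acting regularly'': a right-zero semigroup ($xy=y$) is left cancellative, $\Cay(S,S)$ is the complete digraph with loops (strongly connected), and its left-regular action is the \emph{trivial} action. So a family designed to be incompatible with a transitive/regular automorphism action would not rule out all semigroup representations, and the girth/expansion obstructions you allude to have no target to bite on. Your plan to use Ramanujan graphs here also conflates this theorem with Theorem~\ref{thm:connected}, where expanders are used for non-monoid \emph{undirected} graphs; they play no role in the present proof.

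What the paper actually extracts from Lemma~\ref{lem:strongconn} is much more elementary and is loop-robust by design: left cancellativity gives $|xX|=|X|$ for every $x\in S$ and $X\subseteq S$, hence $|xC^2|=|C^2|$ for all $x$; since $xC$ is exactly the out-neighborhood of $x$, the number of endpoints of directed $2$-walks must be the \emph{same from every vertex}. The family violating this is explicit: $G_{k,\ell,\kappa}$ has $\ell$ layers arranged cyclically with complete bipartite arcs between consecutive layers, all layers of size $k$ except one of size $\left\lfloor\frac{k}{\kappa}\right\rfloor k$ (obtained by merging vertices of a $k^2$-sized layer), with the wrap-around arcs arranged so the digraph stays $k$-outregular. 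From most vertices the $2$-walk count is $k$, but from vertices two layers before the big layer it is $\left\lfloor\frac{k}{\kappa}\right\rfloor k$ --- a contradiction once $\left\lfloor\frac{k}{\kappa}\right\rfloor>1$. Adding loops can only raise the $2$-walk count from a small-layer vertex to at most $2k+1$, which is still below $\left\lfloor\frac{k}{\kappa}\right\rfloor k\geq 3k$ when $\left\lfloor\frac{k}{\kappa}\right\rfloor>2$; this is the loop-insensitive strengthening you hoped for, and it needs no ``out-neighbor pairs'' refinement. Finally, the strong $\kappa$-connectivity and the $(k+\kappa)$-connectivity of the underlying graph are verified directly from the layered structure (any minimal cut must essentially consist of the in-neighbors of a merged vertex, possibly together with a full small layer); no product or blow-up construction is involved.
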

\begin{proof}
%
%
%
%

Given two positive integers $k,\ell$, let $G_{k,\ell}$ be the directed graph with vertex set $\{(0,j)\in\mathbb Z^2\ |\ 0\leq j\leq k^2-1\}\cup\{(i,j)\in\mathbb Z^2\ |\ 1\leq i\leq\ell-1,\,0\leq j\leq k-1\}$ which has an arc from $(i_1,j_1)$ to $(i_2,j_2)$ if and only if $i_1+1=i_2$ or alternatively $i_1=\ell-1$, $i_2=0$ and $j_1=\left\lfloor \frac{j_2}{k}\right\rfloor$. 
Given a positive integer $\kappa$, let $\sim_{\kappa}$ be an equivalence relation on $\{(0,j)\in\mathbb Z^2\ |\ 0\leq j\leq k^2-1\}$ defined as $(0,j_1)\sim_{\kappa} (0,j_2)$ if and only if the following conditions are satisfied:
\begin{itemize}
\item[(i)] $j_1\equiv j_2 \mod k$;
\item[(ii)] $\left\lfloor\frac{j_1}{k\kappa}\right\rfloor=\left\lfloor\frac{j_2}{k\kappa}\right\rfloor$, or $\kappa\nmid k$ and $\left\lfloor\frac{j_1}{k\kappa}\right\rfloor+1=\left\lfloor\frac{j_2}{k\kappa}\right\rfloor=\left\lfloor\frac{k^2-1}{k\kappa}\right\rfloor$ (or the symmetric condition).
\end{itemize}

We call $G_{k,\ell,\kappa}$ the directed graph obtained from $G_{k,\ell}$ by merging all vertices of each $\sim_{\kappa}$-class. See Figure~\ref{fig:nonsemi} for an illustration.

\begin{figure}[h]
\centering
\includegraphics[width=1\textwidth]{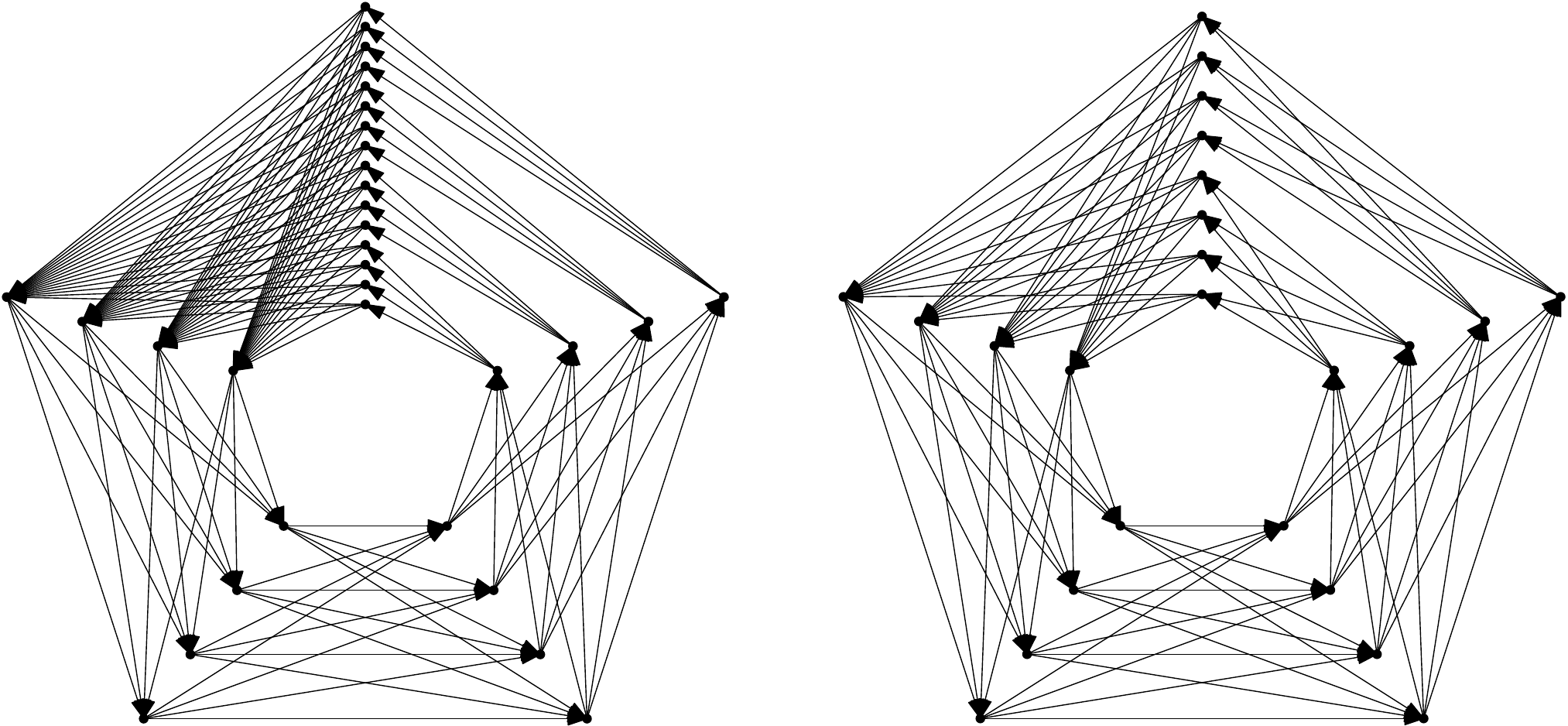}
\caption{Left: the directed graph $G_{4,5}\cong G_{4,5,1}$. Right: the merged directed graph $G_{4,5,2}$.}\label{fig:nonsemi}
\end{figure}


Note that for $\ell\geq 2$ the digraph $G_{k,\ell,\kappa}$ is $k$-outregular and strongly connected. Denote by $V_i$ the set of vertices of $G_{k,\ell,\kappa}$ with first coordinate congruent to $i$ modulo $\ell$ if $\ell\nmid i$, and the set of $\sim_{\kappa}$-classes if $\ell\mid i$. In particular, $|V_0|=\left\lfloor\frac{k}{\kappa}\right\rfloor k$ and $|V_1|=...|V_{\ell-1}|=k$. Assume that $\left\lfloor\frac{k}{\kappa}\right\rfloor>1$ and $\ell\geq 4$.

It is clear that $G_{k,\ell,\kappa}$ is not strongly $(\kappa+1)$-connected: for instance, all in-neighbors of any vertex of $V_0$ form a directed vertex cut. Now let $X$ be a minimal directed vertex cut. Suppose that there is a vertex $x\in X\cap V_i$ with $i\not\equiv -1 \mod\ell$. Then since $|X|\leq\kappa<k$ there is in $V_i\setminus X$ a vertex with the same in-neighbors and out-neighbors as $x$, so $X\setminus\{x\}$ is also a directed vertex cut, a contradiction. Therefore $X\subseteq V_{-1}$. Hence there must be a vertex in $V_0$ with all in-neighbors in $X$, or $X$ would not be a directed vertex cut. This implies that $|X|=\kappa$.

Moreover the underlying undirected graph of $G_{k,\ell,\kappa}$ is $(k+\kappa)$-connected. First note that $V_1\cup Y$, where $Y$ is the set of in-neighbors of a vertex of $V_0$, is a vertex cut. Now let $X$ be a minimal vertex cut; it is clear that $V_{i}\subseteq X$ for some $i$. This $i$ is unique (modulo $\ell$); more precisely, it is the only index satisfying $|X\cap V_i|\geq k$. Suppose that there is a vertex $x\in X\cap V_j$ with $j\not\equiv -1,i \mod \ell$. Then there is a vertex in $V_j\setminus X$ with the same neighbors as $x$, so $X\setminus\{x\}$ is also a vertex cut, a contradiction. Therefore $X\subseteq V_{-1}\cup V_i$. Since $\ell\geq 3$ it is clear that $V_{-1},V_0\neq V_i$. Moreover, since $\ell\geq 4$ there is a vertex $y\in V_0\setminus X$ with all its in-neighbors in $X$. This implies that $k+\kappa\leq|V_i|+|Y|\leq|X|\leq k+\kappa$, where $Y$ is the set of in-neighbors of $y$.

We will now show that:\begin{itemize}
                                                                                                                                                                                                                                                                                                                          \item[(a)] under the assumed hypotheses on $k,\ell,\kappa$, $G_{k,\ell,\kappa}$ is not a semigroup digraph;
\item[(b)] if additionally $\left\lfloor\frac{k}{\kappa}\right\rfloor>2+\frac{1}{k}$, then there is no semigroup $S$ and $C\subseteq S$ for which $G_{k,\ell,\kappa}=\Cay^{\nloop}(S,C)$, where $\Cay^{\nloop}(S,C)$ denotes the graph obtained from $\Cay(S,C)$ by removing all loops.                                                                                                                                                                                                                                                                                                                         \end{itemize}

In both cases a hypothetical representation with a semigroup $S$ and a connection set $C$ leads to a contradiction. Indeed, let $x\in S$ and $X\subseteq S$. By Lemma~\ref{lem:strongconn} $S$ is left cancellative, so $|xX|=|X|$. Now suppose that $x\notin V_{-2}$ and let $y\in V_{-2}$. We have $k=|xC^2|=|C^2|=|yC^2|=\left\lfloor\frac{k}{\kappa}\right\rfloor k$ if there is a semigroup representation in case (a) and $2k+1\geq|xC^2|=|C^2|=|yC^2|\geq\left\lfloor\frac{k}{\kappa}\right\rfloor k$ if there is a semigroup representation in case (b), the desired contradictions.
%
%
\end{proof}
%

On the other hand we also know a smallest outregular non-semigroup digraph.

\begin{prop}\label{nonsemigroup}
 There is an outregular non-semigroup digraph on three vertices and this is smallest possible.
\end{prop}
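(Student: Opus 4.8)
The plan is to exhibit one concrete $2$-outregular digraph $G$ on three vertices, prove it is not a semigroup digraph, and then check directly that every outregular (i.e. $k$-outregular with $k\geq 1$) digraph on at most two vertices \emph{is} a semigroup digraph; together these give both halves of the statement.

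For the construction I take $G$ on vertex set $\{1,2,3\}$ with out-neighbourhoods $N^+(1)=\{1,3\}$, $N^+(2)=\{1,3\}$, $N^+(3)=\{1,2\}$; that is, the arc set is $\{(1,1),(1,3),(2,1),(2,3),(3,1),(3,2)\}$. This $G$ is $2$-outregular, and one checks immediately that it is strongly connected (e.g. $1\to3\to2\to1$ together with the remaining arcs). The key invariant is the size of the set of endpoints of directed walks of length $2$: from $1$ this set is $N^+(1)\cup N^+(3)=\{1,2,3\}$, from $2$ it is $N^+(1)\cup N^+(3)=\{1,2,3\}$, but from $3$ it is only $N^+(1)\cup N^+(2)=\{1,3\}$, of size $2$. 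So the three length-$2$ reachable sets have sizes $3,3,2$.

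To show $G$ is not a semigroup digraph, I argue by contradiction: suppose $G=\Cay(S,C)$ for some semigroup $S$ and nonempty $C\subseteq S$. Since $G$ is strongly connected, Lemma~\ref{lem:strongconn} gives that $S$ is left cancellative, so every left translation $\lambda_v\colon s\mapsto vs$ is injective and hence $|vX|=|X|$ for every $X\subseteq S$ and every $v\in S$. In particular $|vC|=|C|$ for all $v$, which (as all out-degrees equal $2$) forces $|C|=2$. Now the set of endpoints of length-$2$ walks from $v$ is exactly $vC^2$, and left cancellativity yields $|vC^2|=|C^2|$, a value independent of $v$. This contradicts the fact that in $G$ these sets have sizes $3,3,2$. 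Hence $G$ is not a semigroup digraph. (Note this invariant is preserved by digraph isomorphism, so the conclusion does not depend on the labelling.)

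For minimality I verify that on one or two vertices every outregular digraph is a semigroup digraph. On one vertex the only option (out-degree $\geq 1$) is the single loop, which is $\Cay(\{e\},\{e\})$ for the trivial group. On two vertices the $1$-outregular digraphs are covered by Corollaries~\ref{cor:connected} and~\ref{Zelinka}: the connected ones (the $2$-cycle and the two ``$\rho$'' shapes) are monoid digraphs by Corollary~\ref{cor:connected}, and the only disconnected case, the disjoint union of two loops, satisfies the conditions of Corollary~\ref{Zelinka} trivially (every component has $z=1$ and $\ell=0$). The unique $2$-outregular digraph on two vertices is the bidirected edge with both loops, which is $\Cay(\mathbb{Z}_2,\mathbb{Z}_2)$. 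Thus no outregular non-semigroup digraph exists on fewer than three vertices, and $G$ witnesses that three vertices suffice. The main obstacle is the construction itself: one must produce a digraph that is simultaneously $2$-outregular, strongly connected (so that Lemma~\ref{lem:strongconn} applies and pins the representing semigroup down to a left cancellative one), and yet has unequal length-$2$ reachable-set sizes; once such a $G$ is found, both the non-realizability and the small-case enumeration are routine.
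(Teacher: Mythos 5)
Your proof is correct, but it takes a genuinely different route from the paper's. The paper exhibits the bidirected path with loops at both ends (arcs $(x,x),(x,y),(y,x),(y,z),(z,y),(z,z)$) and rules out a semigroup representation by a direct element-wise analysis: since left multiplications preserve loops, the connection-set elements sending $x$ and $z$ to $y$ must equal $y$ itself, and then each of the three possible values of $y^2$ yields a contradiction. You instead choose a different digraph (yours has one loop, the paper's has two, so they are not isomorphic) engineered so that a cardinality invariant does the work: strong connectivity plus Lemma~\ref{lem:strongconn} gives left cancellativity, hence $|vC^2|=|C^2|$ is independent of $v$, contradicting the fact that your length-$2$ reachable sets have sizes $3,3,2$. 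This is exactly the mechanism the paper uses for its infinite family in Theorem~\ref{nonsemigroup_family}, so your argument nicely unifies the two results; note, however, that it could not be run on the paper's own example, whose length-$2$ reachable sets all have size $3$ --- that example genuinely needs the ad hoc analysis. What the paper's hands-on approach buys is finer structural information (it is the kind of case analysis behind the remark that there are exactly three outregular non-semigroup digraphs on three vertices); what yours buys is brevity and a reusable invariant. Your minimality check matches the paper's; the only difference is that the paper also dispatches $0$-outregular digraphs, which you exclude by defining outregular as $k\geq 1$, and which are in any case trivially semigroup digraphs with empty connection set, so nothing hinges on this.
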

\begin{proof}
 Consider the directed graph $G$ from Figure~\ref{fig:notcayleysemigroup} with vertex set $\{x,y,z\}$ and arc set $$\{(x,x),(x,y),(y,x),(y,z),(z,y),(z,z)\}.$$ Suppose that $G=\Cay(S,C)$ for a semigroup $S$ and some $C\subseteq S$. 
It is clear that there is no $u\in S$ with $ux=y$ or $uz=y$. On the other hand, $\exists c,c'\in C\ \, xc=zc'=y$. Therefore, $c=c'=y\in C$. Now suppose that $y^2=x$. Then $yx=xy=y$, a contradiction. Similarly, $y^2\neq z$. Hence $y^2=y$, but this is also a contradiction, because $y$ is loopless. 
\begin{figure}[h]
\centering
\includegraphics[width=0.5\textwidth]{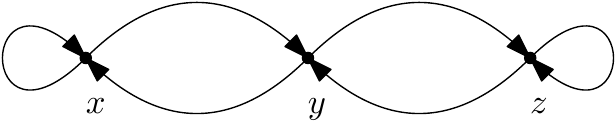}
\caption{A smallest outregular non-semigroup digraph.}\label{fig:notcayleysemigroup}
\end{figure}



Moreover, there is no $k$-outregular non-semigroup digraph with less than three vertices. Since all $0$-outregular digraphs and all $1$-outregular connected digraphs are monoid digraphs (by Theorem~\ref{Monoid_Zelinka}), we only have to worry about $1$-outregular disconnected digraphs and $2$-outregular digraphs. But with less than three vertices there is only one of each kind: the digraph formed by two vertices with a loop each, and the complete digraph with loops of two vertices; and they are respectively isomorphic to $\Cay(\mathbb Z_2,\{0\})$ and $\Cay(\mathbb Z_2,\{0,1\})$, where $\mathbb Z_2$ is the additive group on the integers modulo $2$. 
\end{proof}

\begin{remark}
By examining all cases, one can show that there are in total exactly $3$ outregular non-semigroup digraphs on  $3$ vertices. We omit this discussion here for the sake of brevity. 
\end{remark}

It is known that every graph is isomorphic to some induced subgraph of a Cayley graph of a group~\cite{Babai1978}. Here we show that any sink-free directed graph can be obtained from a monoid digraph by removing a connected component. Note that it is necessary to require the digraph to be sink-free. Similar constructions have been used before (see e.g.~\cite[Theorem 2.2]{gar-20}, also see the standard concept of \emph{transition monoid} in automata theory~\cite[Chapter IV.3]{Pin2012}). Given a graph $G$ (directed or undirected) and a vertex $v$ of $G$, we denote by $G_v$ the connected component of $v$.

\begin{prop}\label{prop:monoid_construction} Let $G=(V,A)$ be a digraph, $\mathcal F$ a set of mappings $V\rightarrow V$ satisfying
\begin{enumerate}[i)]
   \item $\forall f\in\mathcal F\ \ \forall v\in V\ \ (v,f(v))\in A$,
   \item $\forall (v,w)\in A\ \ \exists f\in\mathcal F\ \ f(v)=w$,
\end{enumerate}
and $\langle\mathcal F\rangle$ the monoid generated by $\mathcal{F}$ with respect to composition.
The set $M=V\sqcup\mathcal\langle\mathcal F\rangle$ admits a monoid structure such that $G\cong\Cay(M,\mathcal F)\setminus\Cay(M,\mathcal F)_e$.
\end{prop}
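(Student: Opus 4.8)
The plan is to build an explicit monoid structure on $M = V \sqcup \langle\mathcal F\rangle$ so that the elements of $V$ form a single connected component in $\Cay(M,\mathcal F)$ that is isomorphic to $G$, while the copy of $\langle\mathcal F\rangle$ (containing the neutral element $e$) forms the component that we discard. The key idea is that $\langle\mathcal F\rangle$ already acts on $V$ by the defining action, so the natural thing is to let elements of $\langle\mathcal F\rangle$ multiply via composition among themselves, and to let an element of $\langle\mathcal F\rangle$ act on a vertex $v \in V$ by evaluation. Concretely, I would define the product so that for $g,h \in \langle\mathcal F\rangle$ the product $gh$ is composition; for $v \in V$ and $g \in \langle\mathcal F\rangle$ the product $vg$ is the vertex $g(v)$ (using that $g\colon V\to V$); and crucially I need to decide the products $gv$ and $vw$ for $v,w \in V$. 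Since I want $V$ to be closed under right-multiplication by generators and to absorb multiplication so it stays a separate component, the cleanest choice is to declare $gv := v$ and $vw := w$ for $v,w \in V$, i.e.\ elements of $V$ act as right zeros on the left. The neutral element is $e = \mathrm{id} \in \langle\mathcal F\rangle$, which correctly fixes $e\,m = m$ and $m\,e = m$ for all $m$ (note $ve = v$ since $\mathrm{id}(v)=v$, and $ev = v$ by the right-zero rule).

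First I would verify that this operation is well-defined and that $e$ is a two-sided identity, which is immediate from the case definitions. The main work is checking associativity $(m_1 m_2) m_3 = m_1(m_2 m_3)$ across all eight combinations of the three factors lying in $V$ or in $\langle\mathcal F\rangle$. The cases where all factors are in $\langle\mathcal F\rangle$ reduce to associativity of function composition. The cases mixing a vertex on the left with group elements reduce to the fact that evaluation is compatible with composition, namely $(v g) h = h(g(v)) = (g\,\text{then}\,h)(v) = v(gh)$, which holds because $\langle\mathcal F\rangle$ acts by composing maps in the correct order. The cases where some $m_i \in V$ sits in a non-leftmost position are governed by the right-zero rule: any product with a vertex factor that is not overridden by a left evaluation collapses to the rightmost "vertex-or-value", and I would check these collapse consistently on both sides. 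I expect a couple of these cases to require care with the convention for how $\langle\mathcal F\rangle$ composes (left-to-right versus right-to-left) so that $vg = g(v)$ is genuinely a right action; fixing composition order once at the start makes the mixed cases go through.

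Having established that $M$ is a monoid, I would identify the Cayley graph $\Cay(M,\mathcal F)$. For a vertex $v \in V$ and a generator $f \in \mathcal F$, the arc is $(v, vf) = (v, f(v))$, and by hypothesis~(i) this is an arc of $G$, while by hypothesis~(ii) every arc of $G$ arises this way; hence the induced subdigraph on $V$ is exactly $G$. Moreover $V$ is closed under right-multiplication by $\mathcal F$ (since $vf \in V$), so no arc leaves $V$, and since $G$ is sink-free every $v\in V$ has out-degree at least $1$, so $V$ carries no isolated pieces that could merge with the rest; thus $V$ is a union of connected components of $\Cay(M,\mathcal F)$, and because $G$ is a digraph it is precisely the component structure inherited from $G$. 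Finally the complementary part $\langle\mathcal F\rangle$ is closed under right-multiplication by $\mathcal F$ (composition stays in $\langle\mathcal F\rangle$) and contains $e$, so $\Cay(M,\mathcal F)_e$ is contained in $\langle\mathcal F\rangle$. Removing the component of $e$ therefore leaves exactly $V$ with its $G$-structure, giving $G \cong \Cay(M,\mathcal F)\setminus\Cay(M,\mathcal F)_e$.

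The main obstacle will be associativity in the mixed cases: I must ensure that the right-zero behavior of vertices and the right-action behavior of $\langle\mathcal F\rangle$ on $V$ interact coherently, in particular that a vertex appearing to the left of a group element is "consumed" as an evaluation while a vertex appearing as an isolated left factor is "consumed" as a right zero, and that these two roles never conflict inside a triple product. A secondary point to confirm is that $V$ really splits off as whole connected components rather than merging with $\langle\mathcal F\rangle$; this is where sink-freeness and the closure $V\mathcal F \subseteq V$ are essential, and I would state explicitly that no arc in $\Cay(M,\mathcal F)$ runs between $V$ and $\langle\mathcal F\rangle$ in either direction, since $g f = gf \in \langle\mathcal F\rangle$ keeps group elements inside their block as well.
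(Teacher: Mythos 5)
Your construction is exactly the paper's: the same multiplication table (vertices act as right zeros, $v\cdot g=g(v)$ for $v\in V$ and $g\in\langle\mathcal F\rangle$, and $g\cdot h=h\circ g$ so that evaluation is a genuine right action), the same neutral element $\mathrm{id}$, and the same case analysis for associativity, all of which checks out. The identification of the induced subgraph on $V$ with $G$ via hypotheses (i) and (ii) is also the paper's argument.

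However, your final step has a hole as written: you prove only the inclusion $\Cay(M,\mathcal F)_e\subseteq\langle\mathcal F\rangle$ and then conclude that deleting the component of $e$ leaves exactly $V$. That conclusion needs equality, not containment: if the subgraph induced by $\langle\mathcal F\rangle$ were disconnected, removing $\Cay(M,\mathcal F)_e$ would leave behind spurious components inside $\langle\mathcal F\rangle$, and the remainder would not be isomorphic to $G$. The missing (easy) observation is that every $g\in\langle\mathcal F\rangle$ is a finite product $f_1f_2\cdots f_k$ of generators, so $e\rightarrow f_1\rightarrow f_1f_2\rightarrow\cdots\rightarrow g$ is a directed path in $\Cay(M,\mathcal F)$; hence $\langle\mathcal F\rangle\subseteq\Cay(M,\mathcal F)_e$, the induced subgraph $H[\langle\mathcal F\rangle]$ is a single connected component, and it coincides with the component of $e$. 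With that line added your proof is complete. A secondary remark: your appeal to sink-freeness of $G$ is a red herring; the fact that $V$ is a union of connected components follows purely from the two closure properties you already state ($V\mathcal F\subseteq V$ and $\langle\mathcal F\rangle\mathcal F\subseteq\langle\mathcal F\rangle$), and no outdegree condition is needed for that part of the argument.
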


\begin{proof} Consider the following multiplication $\cdot$ on $M$:
\begin{align*}
 \forall x,y\in V \ \,  & x\cdot y=y, \\ 
 \forall x\in V\ \,\forall f\in\langle\mathcal F\rangle \ \, & x\cdot f=f(x), \\
 \forall x\in V\ \,\forall f\in\langle\mathcal F\rangle \ \, & f\cdot x=x, \\
 \forall f,g\in\langle\mathcal F\rangle \ \, & f\cdot g=g\circ f.
\end{align*}
Note that $\textrm{id}\in\langle\mathcal F\rangle$ serves as the neutral element of $\cdot$. To see that $\cdot$ is associative, pick any $\alpha,\beta,\gamma\in M$. If $\gamma\in V$, then $(\alpha\cdot\beta)\cdot\gamma=\gamma=\alpha\cdot\gamma=\alpha\cdot
(\beta\cdot\gamma)$. Thus, let $\gamma\in\langle\mathcal F\rangle$. In the case $\beta\in V$ we have $(\alpha\cdot\beta)\cdot\gamma=\gamma(\beta)=\alpha\cdot\gamma(\beta)=\alpha\cdot
(\beta\cdot\gamma)$; in the case $\alpha\in V$, $\beta\in\langle\mathcal F\rangle$ we have $(\alpha\cdot\beta)\cdot\gamma=\gamma(\beta(\alpha))=\alpha\cdot(\gamma\circ\beta)=
\alpha\cdot(\beta\cdot\gamma)$; and the case $\alpha,\beta\in\mathcal\langle\mathcal F\rangle$ is trivial. Let $H=\Cay(M,\mathcal F)$. It is clear that the subgraph $H[\langle\mathcal F\rangle]$ induced by $\langle\mathcal F\rangle$ is a connected component of $H$, and it coincides with $H_{\textrm{id}}$. The subgraph $H[V]$ induced by $V$ is formed by the rest of the connected components, and by the hypotheses on $\mathcal F$, it is isomorphic to $G$.
\end{proof}

Proposition~\ref{prop:monoid_construction} has the last result of this section as a corollary.

\begin{corol}\label{k-outreg-corol} Let $G$ be a directed graph in which each vertex has at most $k$ out-neighbors, and at least $1$. Then there is a monoid $M$ and $C\subseteq M$ with $|C|=k$ such that $G\cong\Cay(M,C)\setminus\Cay(M,C)_e$.
\end{corol}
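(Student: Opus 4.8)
The plan is to derive Corollary~\ref{k-outreg-corol} directly from Proposition~\ref{prop:monoid_construction} by constructing, for the given digraph $G=(V,A)$, a suitable set $\mathcal F$ of mappings $V\to V$ with exactly $k$ elements that satisfies hypotheses (i) and (ii). The key point is that the monoid produced by Proposition~\ref{prop:monoid_construction} uses $\mathcal F$ as its connection set, so to achieve $|C|=k$ I must ensure $|\mathcal F|=k$, even though the out-neighborhoods of individual vertices may have fewer than $k$ arcs.

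First I would note that since $G$ is sink-free (minimum outdegree at least $1$), every vertex $v$ has a nonempty set of out-neighbors of size at most $k$. The natural idea is to choose, for each $v\in V$, an enumeration of its out-neighbors, padding it out to a list of length exactly $k$ by repeating neighbors (for instance, repeating the last one). Concretely, for each $v$ fix a surjection $\sigma_v\colon\{1,\dots,k\}\to N^+(v)$ onto the out-neighbor set $N^+(v)$, which exists precisely because $1\le|N^+(v)|\le k$. Then define $f_i\colon V\to V$ by $f_i(v)=\sigma_v(i)$ for each $i\in\{1,\dots,k\}$, and set $\mathcal F=\{f_1,\dots,f_k\}$. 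By construction each $f_i$ sends $v$ to one of its out-neighbors, giving property (i); and since each $\sigma_v$ is surjective onto $N^+(v)$, every arc $(v,w)$ is realized as $w=\sigma_v(i)=f_i(v)$ for some $i$, giving property (ii).

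The one subtlety I would flag is guaranteeing $|\mathcal F|=k$ exactly, since a priori two of the maps $f_i,f_j$ could coincide as functions even if they differ coordinatewise at no vertex. To force the $f_i$ to be pairwise distinct while keeping them valid, I would arrange the padding so that on at least one vertex $v_0$ (any fixed vertex will do) the values $f_1(v_0),\dots,f_k(v_0)$ are not all forced to collapse; more cleanly, I can simply observe that $C=\mathcal F$ as it sits inside the monoid $M=V\sqcup\langle\mathcal F\rangle$ consists of the images of the generators, and if some $f_i=f_j$ as maps then $\langle\mathcal F\rangle$ is generated by fewer elements, but nothing prevents me from taking $C$ to be a multiset-free set of exactly $k$ distinct generators — so the honest fix is to choose the surjections $\sigma_v$ so that the tuple $(f_1,\dots,f_k)$ has $k$ distinct coordinates as functions. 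This is always possible: pick one vertex and assign its $k$ slots to distinct targets whenever its outdegree permits, and for low-outdegree vertices note that distinctness of the $f_i$ only needs to be witnessed at a single vertex. Since we only need $|C|=k$ and some vertex can always be chosen to witness the required distinctness (or, if truly impossible because every vertex has outdegree $1$ and $k>1$, one adds a harmless extra vertex or uses the freedom in the monoid structure), this is a routine adjustment rather than a genuine obstacle.

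Having produced $\mathcal F$ with $|\mathcal F|=k$ and properties (i), (ii), I would invoke Proposition~\ref{prop:monoid_construction} verbatim: it yields a monoid structure on $M=V\sqcup\langle\mathcal F\rangle$ with neutral element $\mathrm{id}$ such that $G\cong\Cay(M,\mathcal F)\setminus\Cay(M,\mathcal F)_e$, where $e=\mathrm{id}$. Taking $C=\mathcal F$ completes the argument. The main obstacle, as indicated, is purely bookkeeping around the cardinality constraint $|C|=k$; the associativity and the component-decomposition $G\cong\Cay(M,C)\setminus\Cay(M,C)_e$ are already supplied by Proposition~\ref{prop:monoid_construction}, so no further verification is needed there.
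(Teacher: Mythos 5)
Your core approach is the same as the paper's: choose $k$ ``choice functions'' $f_1,\dots,f_k$ whose values cover all arcs of $G$ and feed them to Proposition~\ref{prop:monoid_construction}. Your per-vertex surjections $\sigma_v\colon\{1,\dots,k\}\to N^+(v)$ are just a repackaging of the paper's greedy cover of the arc set by $k$ spanning $1$-outregular subdigraphs, and whenever the resulting $f_i$ can be made pairwise distinct, your argument is complete and coincides with the published one.

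The cardinality issue you flag, however, is a genuine gap, not bookkeeping, and none of your proposed repairs closes it. Any admissible $f_i$ must satisfy $f_i(v)\in N^+(v)$ for all $v$, so the number of pairwise distinct admissible functions is at most $\prod_{v\in V}|N^+(v)|$; when this is smaller than $k$ this route is dead. The extreme case is any $1$-outregular $G$ with $k\geq 2$: there is exactly \emph{one} admissible function (for a single vertex with a loop, literally only one map $V\to V$ exists), so no padding scheme and no choice of witnesses can produce $k$ distinct maps, and since $C=\mathcal F$ must consist of distinct elements of $M=V\sqcup\langle\mathcal F\rangle$, Proposition~\ref{prop:monoid_construction} as a black box cannot deliver $|C|=k$. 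Your fallback of ``adding a harmless extra vertex'' is also invalid: in this construction every vertex of $V$ lies outside the identity component of $\Cay(M,\mathcal F)$, so an added vertex survives in $\Cay(M,C)\setminus\Cay(M,C)_e$ and destroys the isomorphism with $G$. The statement is still true, but the repair must happen inside the monoid rather than among the functions: one needs $k$ \emph{distinct} monoid elements that are allowed to act \emph{identically} on $V$ --- for instance, replace $\langle\mathcal F\rangle$ by the quotient of the free monoid on letters $\overline c_1,\dots,\overline c_k$ (acting on $V$ by $v\cdot\overline c_i=f_i(v)$) under the congruence identifying two words if and only if they are equal or both have length at least two and induce the same map on $V$; then $C=\{[\overline c_1],\dots,[\overline c_k]\}$ has exactly $k$ elements and the associativity and component analysis of Proposition~\ref{prop:monoid_construction} go through with the same case distinctions. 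In fairness, the paper's own proof simply asserts that ``the resulting set $\mathcal F$ has order $k$'' and suffers from exactly the same defect in these degenerate cases, so your proposal is no weaker than the published argument; but your claim that the adjustment is routine, and the specific fixes you offer, do not hold up.
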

\begin{proof}
One can greedily cover the arcs of $G$ by $k$ spanning $1$-outregular subdigraphs: to construct the $i$th such digraph $G_i$ simply take any vertex $x$ that has not yet outdegree $1$ in $G_i$, add any not yet covered arc $(x,y)$ to $G_i$ or an already covered one $(x,y)$ if all are covered, and iterate.

Now each $G_i$ yields a function $f_i$ that just maps every vertex $x$ to its out-neighbor $y$ in $G_i$. The resulting set $\mathcal{F}$ has order $k$ and Proposition~\ref{prop:monoid_construction} yields the result.
\end{proof}

\section{Monoid graphs}\label{section:monoid_graphs}

Until here we have examined directed graphs, but now we forget about orientations and loops: we concentrate on the underlying simple undirected graphs. Also, we shift our attention from semigroups in general to just monoids. The aim of this section is to give several examples of both monoid and non-monoid graphs.

Before starting it is worth noting that in this particular setting some assumptions about the connection set can be made. Let $M$ be a monoid and $C\subseteq M$. First, the graph $\underline{\Cay}(M,C)$ does not depend on the pertinence of $e$ to $C$, so it can be assumed that $e\notin C$. 
Second, since now the direction of the arcs is not important, $C$ can be assumed to be closed under taking existing left-inverses. This is:

\begin{lemma}\label{lem:left-inv} Let $M$ be a monoid, $C\subseteq M$ and $G=\underline{\Cay}(M,C)$. Then $G=\underline{\Cay}(M,N(e))$, where $N(e)$ is the set of neighbors of the neutral element.
\end{lemma}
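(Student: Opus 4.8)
The plan is to keep the vertex set $M$ fixed and to show that $\underline{\Cay}(M,C)$ and $\underline{\Cay}(M,N(e))$ have exactly the same edge set. First I would record what $N(e)$ actually is. A vertex $x\neq e$ is a neighbour of $e$ in $G$ precisely when $(e,x)$ or $(x,e)$ is an arc of $\Cay(M,C)$; since $ec=c$, the first case means $x\in C$, while the second means $xc=e$ for some $c\in C$, i.e. $x$ is a left-inverse of an element of $C$. Hence $N(e)=(C\cup L)\setminus\{e\}$, where $L=\{x\in M : xc=e \text{ for some } c\in C\}$. In particular $C\setminus\{e\}\subseteq N(e)$, which is the phenomenon the lemma is exploiting.

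For the inclusion ``every edge of $\underline{\Cay}(M,C)$ is an edge of $\underline{\Cay}(M,N(e))$'', I would take an edge $\{s,t\}$ realised by $sc=t$ for some $c\in C$ (the orientation $tc=s$ being symmetric). Since $s\neq t$, necessarily $c\neq e$, so $c\in N(e)$ and the very same relation $sc=t$ witnesses the edge in $\underline{\Cay}(M,N(e))$. This direction is immediate and uses nothing beyond $C\setminus\{e\}\subseteq N(e)$.

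The only real content is the reverse inclusion, and this is the step where I expect to actually need associativity. Let $\{s,t\}$ be an edge of $\underline{\Cay}(M,N(e))$, realised by some $d\in N(e)$ with $sd=t$ (the case $td=s$ being symmetric). If $d\in C$, then $sd=t$ already exhibits $\{s,t\}$ as an edge of $\underline{\Cay}(M,C)$. The delicate case is when $d\in L$ but $d\notin C$: here $d$ is merely a left-inverse of some $c\in C$, so $dc=e$ while $d$ itself need not be a generator. I would then use associativity to ``undo'' $d$ by computing $tc=(sd)c=s(dc)=se=s$. Thus $tc=s$ with $c\in C$, so $(t,s)$ is an arc of $\Cay(M,C)$ and $\{s,t\}$ is an edge of $\underline{\Cay}(M,C)$; since $s\neq t$ throughout, no suppressed loops interfere.

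Combining the two inclusions shows the edge sets coincide, giving $\underline{\Cay}(M,C)=\underline{\Cay}(M,N(e))$. The main (and rather mild) obstacle is precisely the last computation: one must verify that the generators of $N(e)$ lying outside $C$ — the left-inverses — introduce only edges that $C$ already produces, which is exactly the force of the identity $tc=s(dc)=s$. Everything else is bookkeeping about arc orientations and about discarding the neutral element, neither of which affects the underlying simple graph.
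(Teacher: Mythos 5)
Your proof is correct and follows essentially the same route as the paper's: both reduce to showing that an edge $\{v,w\}$ generated by some $x\in N(e)\setminus C$ is already an edge of $G$, via a left-inverse $c\in C$ with $xc=e$ and the computation $wc=vxc=v$. Your extra bookkeeping (the explicit description $N(e)=(C\cup L)\setminus\{e\}$ and the observation that $c=e$ never generates an edge) only makes precise the convention $e\notin C$ that the paper states just before the lemma.
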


\begin{proof}
Since $C\subseteq N(e)$ all we have to show is that every edge $\{v,w\}$ of $\underline{\Cay}(M,N(e))$ is an edge of $G$. Suppose that $w=vx$ for some $x\in N(e)\setminus C$. Then $xc=e$ for some $c\in C$. This implies that $\{v,w\}$ is also an edge of $G$, because $wc=vxc=v$.
\end{proof}

We begin with some positive results. A graph $G=(V,E)$ is called a \emph{threshold graph} if there exist non-negative reals $t$ and $w_v$, $v\in V$, such that for every $U\subseteq V$ we have that $\sum_{v\in U} w_v\leq t$ if and only if $U$ is an independent set. Equivalently, a threshold graph is a graph that can be obtained from the one-vertex graph by repeatedly adding an isolated vertex or a vertex which is connected to all the others. More equivalent definitions can be found in \cite[Theorem 1.2.4]{Mah-95}. As a consequence of Proposition~\ref{prop:pretresh}, these graphs are monoid graphs.

\begin{prop}\label{prop:pretresh} Let $G=(V,E)$ be a monoid graph and let $x\notin V$. Then both $G'=(V\cup\{x\},E)$ and $G''=(V\cup\{x\},E\cup\bigcup_{v\in V}\{v,x\})$ are monoid graphs.
\end{prop}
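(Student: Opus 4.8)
The plan is to realize both $G'$ and $G''$ by adjoining a single new element to $M$ that acts as a two-sided zero, and to toggle whether this element belongs to the connection set. Write $G=\underline{\Cay}(M,C)$ and form $M^0=M\sqcup\{0\}$, the monoid obtained by adjoining a formal zero: the product of $M$ is extended by $0\cdot m=m\cdot 0=0$ for every $m\in M^0$. First I would record the routine fact that $M^0$ is a monoid with the same neutral element $e$ as $M$ (neutrality at $0$ reads $e\cdot 0=0$, which holds, and associativity is an immediate case check), and note that $0$ is a genuinely new vertex, so that $M^0$ has one more element than $M$.

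The key observation concerns the behaviour of the vertex $0$ in a Cayley graph. For any connection set $D\subseteq M^0$ and any $m\in M^0$ one has $m\cdot 0=0$, whereas a product $m\cdot c=0$ with $c\in M$ forces $m=0$ (since $M$ is closed under multiplication, $mc\in M$ whenever $m,c\in M$). Hence the edges incident to $0$ in $\underline{\Cay}(M^0,D)$ are governed entirely by whether $0\in D$: if $0\notin D$, then every product $m\cdot c$ with $c\in D$ lies in $M$, so $0$ has no neighbour and is isolated; if $0\in D$, then $m\cdot 0=0$ yields an edge $\{m,0\}$ for every $m\in M\setminus\{0\}$, making $0$ adjacent to all old vertices. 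Moreover, since $m\cdot c$ for $m,c\in M$ is computed exactly as in $M$, the subgraph of $\underline{\Cay}(M^0,D)$ induced on $M$ coincides with $G$ whenever $D\cap M=C$; indeed the extra generator $0$, when present, contributes only edges incident to $0$, because $m\cdot 0=0\notin M$ can never equal an old vertex.

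To finish, I would use the bijection $M\leftrightarrow V$ realizing $G=\underline{\Cay}(M,C)$ together with $0\leftrightarrow x$. For $G'$ take $D=C$: then $0$ is isolated and the induced graph on $M$ is $G$, giving $\underline{\Cay}(M^0,C)\cong G'$. For $G''$ take $D=C\cup\{0\}$: then $0$ is adjacent to every vertex of $M$ and the induced graph on $M$ is still $G$, giving $\underline{\Cay}(M^0,C\cup\{0\})\cong G''$.

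The computations are elementary, so I do not expect a genuine obstacle; the monoid axioms for $M^0$ are standard. The one point that must be handled carefully — and which I would isolate as the central step — is the bookkeeping of which products can equal $0$, so as to confirm that inserting $0$ into the connection set creates edges only between $0$ and the old vertices and never among the old vertices themselves, leaving the induced copy of $G$ untouched.
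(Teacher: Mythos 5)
Your proof is correct and follows essentially the same route as the paper: the paper likewise adjoins a single absorbing element $x$ (with $xv=vx=x^2=x$) to $M$ and realizes $G'$ as $\underline{\Cay}(M',C)$ and $G''$ as $\underline{\Cay}(M',C\cup\{x\})$, which is exactly your zero element $0$ and your choice of connection sets $C$ and $C\cup\{0\}$. The only nitpick is the phrase ``every product $m\cdot c$ with $c\in D$ lies in $M$,'' which fails for $m=0$ (the product is $0$, a loop), but your surrounding bookkeeping already handles this, since loops are suppressed in the underlying simple graph.
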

\begin{proof}
Let $M$ be a monoid and $C\subseteq M$ with $G=\underline{\Cay}(M,C)$. Define a monoid structure on $M'=V\cup\{x\}$ using the multiplication from $M$ whenever possible, and defining $xv=vx=x^2=x$ for every $v\in V$. This new multiplication preserves the neutral element from $M$, and it is also associative: let $u,v,w\in M'$ and suppose than at least one of them is equal to $x$; then, $(uv)w=x=u(vw)$. It is straightforward to check that $G'=\underline{\Cay}(M',C)$ and $G''=\underline{\Cay}(M',C\cup\{x\})$.
\end{proof}

\begin{corol}\label{corol:threshold} Threshold graphs are monoid graphs.
\end{corol}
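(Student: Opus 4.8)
The plan is to derive Corollary~\ref{corol:threshold} as an immediate consequence of Proposition~\ref{prop:pretresh}, using the equivalent inductive definition of threshold graphs. Recall that a threshold graph is precisely a graph that can be built starting from the one-vertex graph by repeatedly performing one of two operations: adding an isolated vertex, or adding a dominating vertex (one joined to every existing vertex). The key observation is that these two construction operations are exactly the two operations that Proposition~\ref{prop:pretresh} shows preserve the property of being a monoid graph: passing from $G$ to $G'=(V\cup\{x\},E)$ adjoins an isolated vertex, while passing from $G$ to $G''=(V\cup\{x\},E\cup\bigcup_{v\in V}\{v,x\})$ adjoins a vertex adjacent to all others.

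The argument itself is a straightforward induction on the number of vertices $n$. For the base case, I would note that the one-vertex graph $K_1$ is a monoid graph, realized for instance by the trivial monoid $M=\{e\}$ with $C=\emptyset$, so that $\underline{\Cay}(M,\emptyset)=K_1$. For the inductive step, I would take any threshold graph $G$ on $n\geq 2$ vertices. By the inductive characterization, $G$ is obtained from some threshold graph $H$ on $n-1$ vertices by adding either an isolated vertex or a dominating vertex. By the induction hypothesis $H$ is a monoid graph, and so Proposition~\ref{prop:pretresh} directly yields that $G$ (being $H'$ or $H''$ accordingly) is also a monoid graph. This closes the induction.

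There is no real obstacle here, since all the structural work has already been done in Proposition~\ref{prop:pretresh}; the only point requiring a modicum of care is to invoke the correct equivalent definition of threshold graphs, namely the iterative one via isolated and dominating vertices rather than the weight-threshold formulation given first in the text. The cited reference \cite[Theorem 1.2.4]{Mah-95} guarantees the equivalence of these definitions, so I would simply remark that we use this constructive description. I would write the proof concisely as follows.

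\begin{proof}
We use the equivalent definition of threshold graphs as those graphs obtainable from the one-vertex graph by repeatedly adding either an isolated vertex or a vertex adjacent to all existing vertices \cite[Theorem 1.2.4]{Mah-95}. We argue by induction on the number of vertices. The one-vertex graph is a monoid graph, realized as $\underline{\Cay}(M,\emptyset)$ for the trivial monoid $M=\{e\}$. Suppose every threshold graph on fewer than $n$ vertices is a monoid graph, and let $G$ be a threshold graph on $n\geq 2$ vertices. Then $G$ is obtained from a threshold graph $H$ on $n-1$ vertices by adding either an isolated vertex or a dominating vertex. By the induction hypothesis $H$ is a monoid graph, so by Proposition~\ref{prop:pretresh} the graph $G$, being of the form $H'$ or $H''$, is a monoid graph as well.
\end{proof}
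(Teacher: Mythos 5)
Your proof is correct and takes essentially the same approach as the paper: the paper derives Corollary~\ref{corol:threshold} directly from Proposition~\ref{prop:pretresh} via the iterative characterization of threshold graphs (repeatedly adding an isolated or a dominating vertex), and your induction simply makes explicit the routine argument the paper leaves implicit. The base case via the trivial monoid with $C=\emptyset$ is fine, since the paper allows arbitrary subsets $C\subseteq M$ as connection sets.
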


Another class of monoid graphs is the class of forests. This is a corollary to the monoid version of Zelinka's theorem (Theorem~\ref{Monoid_Zelinka}).

\begin{corol}\label{corol:forests} For every forest $F$ there is a monoid $M$ and $a\in M$ such that $F$ is isomorphic to $\underline{\Cay}(M,\{a\})$.
\end{corol}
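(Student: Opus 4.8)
The plan is to reduce the statement about forests to the already-proven monoid version of Zelinka's theorem (Theorem~\ref{Monoid_Zelinka}). That theorem characterizes which $1$-outregular digraphs are monoid digraphs of the form $\Cay(M,\{a\})$, and since taking a single-element connection set produces a digraph whose underlying simple graph has maximum degree controlled by the out/in-structure, the natural idea is to find, for a given forest $F$, a $1$-outregular \emph{orientation-with-loops} of $F$ whose underlying simple undirected graph is exactly $F$, and which additionally satisfies the divisibility and length conditions of Theorem~\ref{Monoid_Zelinka}.

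First I would orient $F$ so that it becomes $1$-outregular. For a tree, root it and orient every edge from child to parent, so each non-root vertex has a unique out-neighbor (its parent); then place a loop at the root so that it too has outdegree $1$. For a forest, do this in each tree component. The resulting digraph $G$ is $1$-outregular, its underlying simple undirected graph is precisely $F$ (the loop at each root disappears when passing to the underlying simple graph, and no edge is created or destroyed since the orientation is acyclic apart from the single loops), and each connected component $\mathcal{C}$ of $G$ has its unique cycle equal to a loop, so $z(\mathcal{C})=1$.

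The key point is that $z(\mathcal{C})=1$ for every component, which makes the divisibility condition of Theorem~\ref{Monoid_Zelinka} automatic: $z(\mathcal{D})=1$ divides $z(\mathcal{C})=1$ for all components. The remaining condition requires a component $\mathcal{C}$ with $\ell(\mathcal{D})\leq\ell(\mathcal{C})$ for every component $\mathcal{D}$. Here $\ell(\mathcal{D})$ is the maximum distance of a vertex to the loop, i.e., the maximum depth of the rooted tree. I would therefore choose the roots so that the depths are controlled — concretely, simply take $\mathcal{C}$ to be the component of maximum depth, which exists since $F$ is finite. Then $\ell(\mathcal{D})\leq\ell(\mathcal{C})$ holds by definition, and Theorem~\ref{Monoid_Zelinka} applies to give a monoid $M$ and $a\in M$ with $G\cong\Cay(M,\{a\})$. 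Passing to underlying simple undirected graphs yields $F\cong\underline{\Cay}(M,\{a\})$, as desired.

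The main obstacle to watch for is purely bookkeeping: verifying that passing to the underlying simple undirected graph really recovers $F$ and does not accidentally identify distinct edges or create new ones. Since the child-to-parent orientation is an orientation of the actual tree edges and the only added arcs are loops at roots, no anti-parallel pair collapses into a single edge incorrectly and loops vanish in $\underline{\Cay}$, so the underlying graph is exactly $F$. One should also confirm the definition of $\ell$ matches the rooted-tree depth (length of the shortest directed path to the cycle, which for the child-to-parent orientation is exactly the distance to the root), which is immediate. Thus the whole argument is a short reduction, with the only real content being the choice of orientation making all cycle-lengths equal to $1$ and selecting the deepest component as $\mathcal{C}$.
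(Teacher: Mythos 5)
Your proposal is correct and is essentially the paper's own proof: the paper also orients each component of $F$ towards a chosen root, adds a loop there (making every $z(\mathcal{C})=1$ and some component maximize $\ell$), and invokes Theorem~\ref{Monoid_Zelinka}. The only difference is presentational — your extra bookkeeping about recovering $F$ as the underlying simple graph is left implicit in the paper.
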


\begin{proof} Let $F$ be any forest. Pick in each component $\mathcal{C}$ an arbitrary vertex $v$. Direct all edges of $\mathcal{C}$ towards $v$ and add a loop at $v$. Clearly, the resulting digraph satisfies the hypothesis of Theorem~\ref{Monoid_Zelinka}: every component $\mathcal C$ has $z(\mathcal{C})=1$, and some of them maximize $\ell(\mathcal{C})$. 
%
%
%
\end{proof}

Corollary~\ref{corol:forests} leads to the idea of covering a graph with several forests in order to find a monoid representation. More precisely, a \emph{pseudoforest} is a graph having at most one cycle per connected component. The \emph{pseudoarboricity} $p(G)$ (resp. \emph{arboricity} $a(G)$) of a graph $G$ is the minimum number of pseudoforests (resp. forests) needed to cover all the edges of $G$. These invariants can be computed in polynomial time~\cite{Gabow1992}, and results from \cite{NW1964,Frank1976,Picard2006} yield the following 
formulas:
\begin{align*}
a(G)&= \underset{S\subseteq V(G)}{\max} \left\lceil\frac{|E(G[S])|}{|S|-1}\right\rceil\\
p(G)&=  \underset{S\subseteq V(G)}{\max} \left\lceil\frac{|E(G[S])|}{|S|}\right\rceil\!,
\end{align*}
where $E(G[S])$ is the set of edges of the subgraph of $G$ induced by $S$. Not too surprisingly, in the undirected setting the pseudoarboricity has a similar role to that of $k$-outregularity in Section~\ref{sec:digraphs}. The pseudoarboricity of a semigroup graph is bounded by the size of the connection set in any semigroup representation. More precisely:
\begin{obs}
 If $G=\underline{\Cay}(S,C)$ is a semigroup graph, then $p(G)\leq |C|$.
\end{obs}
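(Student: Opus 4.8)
The plan is to exploit the way the edges of a semigroup graph split up according to which element of $C$ produces them, and to observe that each such part is a pseudoforest. First I would fix a generator $c\in C$ and consider right-multiplication by $c$, that is, the map $s\mapsto sc$ on $S$. The associated arc set $A_c=\{(s,sc)\ |\ s\in S\}$ gives every vertex out-degree exactly $1$, so $(S,A_c)$ is a functional digraph, and crucially $A_c$ accounts for all arcs of $\Cay(S,C)$ as $c$ ranges over $C$.

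Next I would invoke the standard fact that the underlying undirected graph of a functional digraph is a pseudoforest: each weakly connected component has $n$ vertices and $n$ arcs, so after forgetting orientations it carries at most one independent cycle per component. Writing $P_c$ for the simple undirected graph obtained from $(S,A_c)$ by forgetting directions, suppressing parallel edges, and deleting loops (the loops occurring precisely when $sc=s$), I would note that $P_c$ is a pseudoforest on the vertex set $S$.

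Finally, every edge $\{s,t\}$ of $G=\underline{\Cay}(S,C)$ comes from an arc $(s,sc)$ (or $(t,tc)$) for some $c\in C$, hence lies in the edge set of the corresponding $P_c$. Therefore $\{P_c\ |\ c\in C\}$ is a family of $|C|$ pseudoforests covering all the edges of $G$, which yields $p(G)\leq|C|$ directly from the definition of pseudoarboricity. The whole argument is essentially immediate once the functional-digraph observation is in place; the only step needing a moment of care is verifying that deleting loops and merging parallel or anti-parallel edges (the latter arising when $sc\cdot c=s$) leaves $P_c$ a pseudoforest. This holds because $P_c$ is a simple subgraph of the underlying multigraph of $(S,A_c)$, and passing to such a subgraph cannot create additional independent cycles within a component.
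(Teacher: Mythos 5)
Your proof is correct and follows exactly the reasoning the paper leaves implicit in this Observation: each $c\in C$ induces the $1$-outregular (functional) digraph $\Cay(S,\{c\})$, whose underlying simple graph is a pseudoforest, and these $|C|$ pseudoforests cover all edges of $\underline{\Cay}(S,C)$. This is precisely the connection between $k$-outregularity and pseudoarboricity that the paper invokes, so there is nothing to add.
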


  Corollary~\ref{k-outreg-corol} together with the fact that $p(G)$ is the minimum, over all orientations $G'$ of $G$, of the maximum outdegree of $G'$, gives:

\begin{corol} Let $G$ be a graph with pseudoarboricity $k$. Then there is a monoid $M$ and $C\subseteq M$ with $|C|=k$ such that $G\cong\underline\Cay(M,C)\setminus\underline \Cay(M,C)_e$.
\end{corol}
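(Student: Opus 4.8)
The plan is to combine Corollary~\ref{k-outreg-corol} with the quoted fact that $p(G)$ equals the minimum, over all orientations of $G$, of the maximum outdegree. Write $k=p(G)$ and assume first that $k\geq 1$; the edgeless case $k=0$ can be disposed of directly by taking $C=\emptyset$ and any monoid $M$ of order $|V(G)|+1$, since then $\underline\Cay(M,\emptyset)$ is edgeless and deleting the one-vertex component $\underline\Cay(M,\emptyset)_e$ leaves exactly $|V(G)|$ isolated vertices, while $|C|=0=k$.

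First I would fix an orientation $G'$ of $G$ whose maximum outdegree is $k$, which exists by the formula for $p(G)$. The obstacle is that Corollary~\ref{k-outreg-corol} requires a \emph{sink-free} digraph, in which every vertex has outdegree at least $1$, whereas an outdegree-minimizing orientation typically has sinks; for instance, every orientation of a tree has a sink, because the total outdegree equals the number of edges, which is strictly smaller than the number of vertices. This is the main difficulty, and it is precisely the point at which passing to underlying simple undirected graphs pays off. To eliminate the sinks I would add a loop at every vertex of outdegree $0$ in $G'$, obtaining a digraph $G''$ in which every vertex now has between $1$ and $k$ out-neighbors: a loop counts as an out-neighbor of its vertex, and since $k\geq 1$ adding loops does not raise the maximum outdegree above $k$.

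Now I would apply Corollary~\ref{k-outreg-corol} to $G''$, producing a monoid $M$ and a set $C\subseteq M$ with $|C|=k$ together with a digraph isomorphism $G''\cong\Cay(M,C)\setminus\Cay(M,C)_e$. It remains to pass to underlying simple undirected graphs on both sides. On the left, suppressing directions and loops turns $G''$ back into $G$, because $G'$ is a genuine orientation of the simple graph $G$ (no parallel or anti-parallel arcs) and the loops added at sinks disappear. On the right, the key observation is that the weakly connected components of a digraph are exactly the connected components of its underlying undirected graph; hence deleting the component $\Cay(M,C)_e$ commutes with the passage to the underlying simple undirected graph, so the right-hand side becomes $\underline\Cay(M,C)\setminus\underline\Cay(M,C)_e$. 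This yields $G\cong\underline\Cay(M,C)\setminus\underline\Cay(M,C)_e$ with $|C|=k$, as required. The only points needing genuine care are the verification that adding loops keeps the greedy arc-covering in the proof of Corollary~\ref{k-outreg-corol} valid (it does, since every vertex now has an out-arc from which the greedy step can start) and the commutation of component deletion with the underlying-graph operation.
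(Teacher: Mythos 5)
Your proof is correct and follows exactly the route the paper intends: the paper derives this corollary in a single line from Corollary~\ref{k-outreg-corol} together with the characterization of $p(G)$ as the minimum, over orientations, of the maximum outdegree, giving no further details. Your observation that such an orientation may have sinks, and that adding loops at the sinks (which then vanish upon passing to underlying simple undirected graphs, while component deletion commutes with that passage) repairs this, is precisely the detail the paper leaves implicit, and your verification of it is sound.
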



Corollary~\ref{corol:forests} can be restated by saying that graphs of arboricity $1$ are monoid graphs. One could ask what happens with higher arboricities. In Proposition~\ref{prop:planar_nonmonoid} we show an infinite family of planar non-monoid graphs of arboricity $2$ and treewidth $3$.

\begin{prop}\label{prop:planar_nonmonoid} The disjoint union $K_4\cup C_{\ell}$ is non-monoid, for all $\ell>1$ with $2,3\nmid\ell$.
\end{prop}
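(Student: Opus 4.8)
## Proof Proposal

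The goal is to show that $K_4 \cup C_\ell$ is not a monoid graph when $2,3 \nmid \ell$ and $\ell > 1$. The plan is to argue by contradiction: suppose $K_4 \cup C_\ell \cong \underline{\Cay}(M,C)$ for some monoid $M$ and connection set $C$. By Lemma~\ref{lem:left-inv} I may assume $C = N(e)$, the neighborhood of the neutral element $e$; in particular, $e$ sits in one of the two components. I first analyze which component contains $e$ and what its degree forces about $|C|$. Since $\underline{\Cay}(M,C)$ has maximum degree $3$ (every vertex of $K_4$ has degree $3$, every vertex of $C_\ell$ has degree $2$), we get $|C| \le 3$. The two components have different local structure: $K_4$ is regular of degree $3$ and $C_\ell$ is regular of degree $2$, so the neighbor count of $e$ pins down which component $e$ lies in and how large $C$ is.

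The key tool throughout will be Lemma~\ref{lem:leftM}: left-multiplication $\varphi_s$ by any $s \in M$ is an endomorphism of the colored Cayley graph, hence maps edges to edges in $\underline{\Cay}(M,C)$. In particular each $\varphi_s$ sends the component $G_e$ containing $e$ into a single component (endomorphisms preserve connectivity), and $\varphi_s(e) = s$, so the image $sC \subseteq N(s)$. The crucial combinatorial idea is to track how left-multiplication moves vertices between the two components and what constraints the degrees impose. I would examine the two cases $e \in K_4$ and $e \in C_\ell$ separately. If $e \in C_4$-side (the cycle), then $|C| = 2$, and left-multiplying by a vertex $t$ in the $K_4$ component must send $e \mapsto t$ with $tC \subseteq N(t)$; since $t$ has degree $3$ but $|tC| \le |C| = 2$, this is consistent only if the map is non-injective in a controlled way, and I would exploit the interaction of products $tc$ for $c \in C$ to force a contradiction with associativity or with the rigid structure of $K_4$. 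Symmetrically, if $e \in K_4$, then $|C| = 3$, and left-multiplication by a cycle vertex must squeeze three generators into a degree-$2$ neighborhood, again forcing coincidences $cc' = cc''$ that contradict cancellation-type behavior.

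The main engine will be a counting/orbit argument in the spirit of the proof of Theorem~\ref{nonsemigroup_family}: the map $s \mapsto sC$ relates the neighborhoods across the graph, and because the two components have coprime-flavored sizes (this is where $2,3 \nmid \ell$ enters — it controls the cycle length modulo the small numbers $2,3,4$ arising from $K_4$ and from periods of elements), the images of $C$ under left-multiplication cannot consistently fill degree-$3$ neighborhoods in $K_4$ while respecting the degree-$2$ structure of $C_\ell$. Concretely, I expect to consider the element $a \in C$ generating movement along the cycle and study its period $h = z(C_\ell)$-type quantity; the divisibility constraints from a Zelinka-type analysis (Theorem~\ref{Monoid_Zelinka}) of the relevant $1$-outregular subdigraphs will clash with the fixed period $4$ coming from $K_4$ unless $\ell$ shares factors $2$ or $3$, which the hypothesis forbids.

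The hardest step, I anticipate, is organizing the case analysis cleanly: unlike the directed setting, in $\underline{\Cay}(M,C)$ an edge $\{v,w\}$ can arise either from $w = vc$ or from $v = wc$, so each neighbor relation admits two algebraic explanations, and one must carefully book-keep which products land where. The delicate point is proving that left-multiplication by a vertex of one component cannot map the generator set $C$ bijectively onto the neighborhood of its image in the other component — this requires ruling out subtle non-injective configurations using associativity together with the precise values $|K_4| = 4$, $\deg = 3$ and $|C_\ell| = \ell$, $\deg = 2$, and it is exactly here that the arithmetic conditions $2 \nmid \ell$ and $3 \nmid \ell$ must be invoked to close every branch.
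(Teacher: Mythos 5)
Your proposal is a strategy outline rather than a proof: every place where a contradiction must actually be derived is deferred (``I would exploit\dots'', ``I expect\dots'', ``the hardest step, I anticipate\dots''), and the mechanisms you name for closing the cases do not work as stated. In the case $e\in K_4$ you want a contradiction because left-multiplication by a cycle vertex ``squeezes three generators into a degree-$2$ neighborhood, forcing coincidences $cc'=cc''$ that contradict cancellation-type behavior''. But no cancellation is available here: Lemma~\ref{lem:strongconn} requires strong connectivity, which fails badly since $\Cay(M,C)$ is disconnected, and coincidences $tc=tc'$ are algebraically harmless in a monoid, so nothing contradicts them. What the paper actually uses for this case is completeness of $K_4$: since $\ell$ is odd, $C_\ell$ contains two consecutive arcs, giving $u$, $uc$, $ucc'$ pairwise distinct; if $e\in K_4$, then $cc'$ lies in the closed neighborhood of $e$ (because $K_4$ is complete), and applying the endomorphism of left-multiplication by $u$ forces $ucc'$ into the closed neighborhood of $u$, which is false. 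Similarly, in the case $e\in C_\ell$ the inequality $|tC|\le 2<3=\deg(t)$ is not a contradiction --- as you yourself note, edges at $t$ can also arise as in-arcs $vc=t$ --- and your proposal offers no replacement argument.

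The second gap is that the ``Zelinka-type clash of periods'' you invoke is not automatic; it is in fact the bulk of the real proof. There is no ``fixed period $4$ coming from $K_4$''. Rather, one must argue: the periods of $c,c'$ lie in $\{1,2,\ell\}$ because their power cycles live inside $C_\ell$, so by Theorem~\ref{Monoid_Zelinka} the pseudoforests $\underline{\Cay}(M,\{c\})$ and $\underline{\Cay}(M,\{c'\})$ contain no $3$- or $4$-cycles (this is exactly where $2,3\nmid\ell$ enters); hence each of $c,c'$ contributes exactly $3$ edges to $K_4$, and these form two edge-disjoint copies of $P_4$. Even after that, the contradiction is not immediate: the paper still needs a case analysis on whether some edge of $C_\ell$ carries both colors, uses oddness of $\ell$ again together with Lemma~\ref{lem:monosabi} to produce a loop $xc=x$ in the $K_4$-component of $\Cay(M,C)$, and then closes with the associativity computations $cc'\in\{e,c,c^2\}$ and $wc'=v(cc')\in\{v,w,x\}$, each of whose values contradicts either the uniqueness of the loop or the edge-disjointness of the two paths. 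These steps, which are where the actual work lies, are absent from your proposal.
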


\begin{proof}
Assume for a contradiction that $\underline{\Cay}(M,C)=K_4\cup C_{\ell}$ for some monoid $M$ and $C\subseteq M$. Denote by $\overrightarrow{K_4}$, $\overrightarrow{C_{\ell}}$ the corresponding components of $\Cay(M,C)$, that are directed graphs with possible loops and anti-parallel arcs. Since $\ell$ is odd, there are two consecutive arcs in $\overrightarrow{C_{\ell}}$. Hence, there is a vertex $u$ and $c,c'\in C$ such that the vertices $u,uc,ucc'$ are all different. If the neutral element $e$ is in $K_4$, then $e,cc'$ are either neighbors or the same vertex. But then $u,ucc'$ should also be neighbors or the same vertex, a contradiction. 

Therefore $e$ must be in $C_{\ell}$. Hence by Lemma~\ref{lem:left-inv} we can suppose that $C=\{c,c'\}$ with $c\neq c'$. The periods of $c,c'$ can only be $1$, $2$ or $\ell$, so by Theorem~\ref{Monoid_Zelinka} both $\underline{\Cay}(M,\{c\})$ and $\underline{\Cay}(M,\{c'\})$ are pseudoforests without cycles of length $3$ or $4$. Then each element of $C$ corresponds to at most $3$ non-loop arcs of $\overrightarrow{K_4}$, and it must correspond to exactly $3$, because $K_4$ has $6$ edges; moreover, none of the edges of $K_4$ correspond to both $c,c'$. Therefore, if we group the edges of $K_4$ depending on whether they come from $c$ or from $c'$, we obtain two edge-disjoint copies of the path $P_4$. We now distinguish two cases. 

If there is an edge in $C_{\ell}$ corresponding to both $c,c'$, any endomorphism mapping $e$ to a vertex of $K_4$ implies the existence of a loop in $\overrightarrow{K_4}$. Lemma~\ref{lem:monosabi} guarantees the existence of such an endomorphism. 

If otherwise every edge in $C_{\ell}$ corresponds to either $c$ or $c'$, since $\ell$ is odd then there is a loop in $\overrightarrow{C_{\ell}}$. Thus, by Lemma~\ref{lem:monosabi}   there is a loop in $\overrightarrow{K_4}$. We can suppose that this loop corresponds to $c$. Let $x$ be the vertex of the loop. Wherever $x$ lies along the $c$-copy of $P_4$, it implies there are vertices $v\neq w$, different from $x$, such that $vc=w,wc=x$. This implies that $e,c,c^2$ are all different. We know that $cc'\in\{e,c,c^2\}$, so $xc'=(xc)c'=x(cc')=x$, and there cannot be more loops in $\overrightarrow{K_4}$. Now, $wc'=vcc'\in\{v,w,x\}$, but that is a contradiction.
\end{proof}

\begin{figure}[h]
\centering
\includegraphics[width=.5\textwidth]{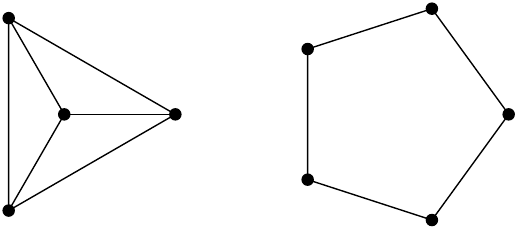}
\caption{The graph $K_4\cup C_5$ is the smallest non-monoid graph we know of.}

\end{figure}\label{fig:K4C5}

\begin{remark}
We have checked by computer, that all graphs on up to $6$ vertices are monoid graphs. The instance of Proposition~\ref{prop:planar_nonmonoid} on $9$ vertices depicted in Figure~\ref{fig:K4C5} is the smallest non-monoid graph we know of. Is it the smallest one?
\end{remark}

The construction and the argument in Proposition~\ref{prop:planar_nonmonoid} are based on the fact that the graph is disconnected. We dedicate the remainder of the section to the construction of $k$-connected non-monoid graphs for arbitrarily large $k$ (Theorem~\ref{thm:connected}). 

We start with some more technical definitions and lemmas. Given an undirected graph $G=(V,E)$, consider the set $\mathcal F=\{f:V\rightarrow E\ |\ \forall v\in V: v\in f(v)\}$ and define $\beta(G,k)=\underset{f_1,...,f_k\in\mathcal F}{\max}\;\alpha(\,(V,E\setminus\bigcup_{i=1}^k f_i(V))\,)$, where $\alpha$ denotes the independence number.

\begin{lema}\label{lem:necesary}
Let $G=(V,E)$ be a triangle-free $n$-vertex graph with minimum degree $\delta$ and maximum degree $\Delta\geq 2$. If for $k\geq 0$, $\ell>2\Delta+2k+1$ there is a monoid $M$ and $C\subseteq M$ such that $\underline{\Cay}(M,C)$ consists of $G$ and $K_{\ell}$ joined by $k$ edges, then $\beta(G,k)\geq\frac{n}{\Delta-1}\left(\frac{\delta}{2}-k-1\right)$.
\end{lema}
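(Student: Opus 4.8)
The plan is to assume that a representation $\underline{\Cay}(M,C)=G\cup K_{\ell}$ (with the $k$ joining edges) exists and to extract from the monoid structure explicit functions $f_1,\dots,f_k\in\mathcal F$ whose removal leaves a very sparse graph with a large independent set. By Lemma~\ref{lem:left-inv} I may assume $C=N(e)$, the set of neighbours of the neutral element, and I write $A=V(G)$ and $B=V(K_{\ell})$. The fact I will use throughout is that \emph{every} edge of $\underline{\Cay}(M,C)$ has the form $\{m,mc\}$ with $c\in C$; I call such an edge \emph{forward-generated from $m$}, so that each edge is forward-generated from at least one of its endpoints, and each vertex forward-generates at most $|C|$ edges.

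First I would locate $e$. Since each vertex has at most $|C|$ forward neighbours, covering the $\binom{\ell}{2}$ edges of $K_{\ell}$ forces $|C|\ge(\ell-1)/2$. On the other hand a vertex of $A$ has degree at most $\Delta+k$ in $\underline{\Cay}(M,C)$ (at most $\Delta$ inside $G$, at most $k$ across), and $\ell>2\Delta+2k+1$ gives $(\ell-1)/2>\Delta+k$; hence $e$ cannot lie in $A$, so $e\in B$. Therefore $C$ splits as $C=C_B\sqcup C_A$, where $C_B=B\setminus\{e\}$ are the clique-neighbours of $e$ and $C_A=C\cap A$ are the across-neighbours of $e$, so $|C_A|\le k$.

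The heart of the argument, and the step I expect to be the main obstacle, is to bound the forward edges of $G$. For $a\in A$ the left-multiplication map $\varphi_a$ from Lemma~\ref{lem:leftM} is a graph homomorphism sending the clique $B$ to the clique $\{a\}\cup\{ac\mid c\in C_B\}$, which passes through $a\in A$ since $\varphi_a(e)=a$. Because $G$ is triangle-free and only $k$ edges cross, any clique of $\underline{\Cay}(M,C)$ meeting $A$ contains at most two vertices of $A$; hence $a$ has at most \emph{one} forward neighbour inside $A$ arising from $C_B$. Combined with the at most $|C_A|\le k$ forward neighbours arising from $C_A$, every $a\in A$ forward-generates at most $k+1$ edges of $G$. (The endomorphisms $\varphi_a$ needed here are supplied by Lemma~\ref{lem:monosabi}.)

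Finally I would convert this into the stated bound. For each $c\in C_A$ the assignment $a\mapsto\{a,ac\}$, completed to an arbitrary incident edge wherever $ac\notin A$, lies in $\mathcal F$; these give $f_1,\dots,f_k$ (padding with arbitrary covers if $|C_A|<k$), and after deleting $\bigcup_i f_i(V)$ every surviving edge of $G$ is forward-generated through $C_B$. By the previous paragraph the surviving graph $G'$ then admits an orientation of out-degree at most $1$, i.e.\ it is a pseudoforest; moreover double counting forward against backward edges (each vertex has out-degree at most $k+1$ but total degree at least $\delta$) pins the surviving edge count near $n\bigl(\tfrac{\delta}{2}-k-1\bigr)$ on the nose and forces $G'$ to be sparse. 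A Caro--Wei/greedy independent-set estimate on $G'$, using maximum degree at most $\Delta$, then produces an independent set of size at least $\tfrac{n}{\Delta-1}\bigl(\tfrac{\delta}{2}-k-1\bigr)$, and since each $f_i\in\mathcal F$ this lower-bounds $\beta(G,k)$. The two delicate points are the clique bound of the third paragraph and calibrating this last estimate so the constant comes out exactly as stated; I would carry out the final counting carefully, since the forward/backward balance is precisely what makes $\tfrac{\delta}{2}-k-1$ the right quantity.
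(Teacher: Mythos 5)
Your first three steps are sound, and the third one --- using the left-multiplication maps $\varphi_a$ (these are supplied by Lemma~\ref{lem:leftM}; Lemma~\ref{lem:monosabi} is not needed) to see that $\varphi_a(B)=\{a\}\cup aC_B$ is a clique through $a$, so that triangle-freeness leaves $a$ with at most one forward neighbour inside $A$ coming from $C_B$ --- is correct and genuinely different from anything in the paper's proof. What you did not notice is how strong it is: every edge of $G$ is forward-generated from one of its endpoints, so summing your bound over $a\in A$ gives $|E(G)|\le n(k+1)$, while $|E(G)|\ge n\delta/2$; hence the hypothesized representation forces $\delta\le 2k+2$. Consequently $\frac{\delta}{2}-k-1\le 0$, the right-hand side of the lemma is non-positive, and the conclusion holds trivially. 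Your steps 1--3 therefore already prove the lemma, and in fact a strictly stronger statement (for instance it excludes $K_\ell$ joined to the Petersen graph, which passes the lemma's inequality since $\beta=\alpha=4\ge 5/2$; it would also shortcut the contradiction in Theorem~\ref{thm:connected}).

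The genuine gap is your final paragraph, which is not a valid derivation and, by the above, is also superfluous. From ``$G'$ is a triangle-free pseudoforest of maximum degree at most $\Delta$'' one cannot deduce $\alpha(G')\ge\frac{n}{\Delta-1}\left(\frac{\delta}{2}-k-1\right)$ as a graph-theoretic implication: a disjoint union of $5$-cycles is a triangle-free pseudoforest with independence number $\frac{2n}{5}$, whereas for $\delta=\Delta=d$ large and $k$ fixed that target tends to $\frac{n}{2}$; and Caro--Wei with maximum degree $\Delta$ yields only $\frac{n}{\Delta+1}$. No calibration fixes this; the step only ``works'' because the target is non-positive, which you never observe. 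Indeed your own sentence about the surviving edge count contains the contradiction you should have exploited: a pseudoforest has at most $n$ edges, yet at least $n\left(\frac{\delta}{2}-k\right)$ edges of $G$ survive, which is precisely $\delta\le 2k+2$ again. For comparison, the paper's proof takes a different route that produces an actual independent set: it defines $C'=\{c\in C\mid\exists d\in C:\ cd\notin C\cup\{e\}\}$, injects $C'$ into the $k$ crossing edges to get $|C'|\le k$, uses triangle-freeness to show that, with respect to $C\setminus C'$, every vertex with positive out-degree has in-degree at most one, and then counts in-degrees to lower-bound the set $O$ of out-degree-zero vertices, which is independent after deleting the edges selected by $f_c$, $c\in C'$. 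That argument never uncovers the sparsity obstruction your clique argument reveals, which is why the paper needs the full inequality whereas you do not.
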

\begin{proof}
By Lemma~\ref{lem:left-inv}, we can suppose that the elements of $C$ are exactly the neighbors of $e$. Hence, $e$ has to be a vertex of $K_{\ell}$: otherwise $|C|\leq\Delta+k$, but this would imply that there are at most $\ell(\Delta+k)+k<\frac{\ell(\ell-1)}{2}+k$ edges incident to vertices of $K_{\ell}$.

Let $E'$ be the set of edges joining $K_{\ell}$ and $G$, and $C'=\{c\in C\mid \exists d\in C\ cd\notin C\cup\{e\}\}$. For each $c\in C'$ let $\varepsilon_c$ be the edge $\{e,c\}\in E'$ if $c\in V$, or any edge of $E'$ adjacent to $c$ if $c\notin V$. The map $C'\rightarrow E'$ defined by $c\mapsto\varepsilon_c$ is injective, so $|C'|\leq |E'|=k$.

For each $u\in V$ define 
\[\textrm{indeg}^{C'}(u)=\left|\,\{v\in V\setminus\{u\} \mid \exists c\in C\setminus C'\ vc=u\}\,\right|,\]
\[\textrm{outdeg}^{C'}(u)=\left|\,\{v\in V\setminus \{u\} \mid \exists c\in C\setminus C'\ uc=v\}\,\right|.\]
Consider the set $O=\{u\in V\,|\,\textrm{outdeg}^{C'}(u)=0\}$. Let $v\in V\setminus O$ and $w\in V\setminus\{v\}$ with $vc=w$ for some $c\in C\setminus C'$. Suppose that there are two different $u_1,u_2\in V\setminus\{v\}$ and $c_1,c_2\in C\setminus C'$ with $u_1 c_1 = u_2 c_2=v$. We can suppose that $u_1\neq w$. Observe that $c_1 c\in C\cup\{e\}$, because otherwise $c_1\in C'$. In fact, since $u_1,v,w$ are pairwise different, $c_1 c\in C$, but then these vertices form a triangle, a contradiction. Therefore $\textrm{indeg}^{C'}(v)\leq 1$. Now we can write
\[\frac{n\delta}{2}-nk\,\leq\, |E|-n|C'|\,\leq\, \sum_{u\in V}\textrm{indeg}^{C'}(u)\,=\,\sum_{u\in O}\textrm{indeg}^{C'}(u)+\sum_{u\in V\setminus O}\textrm{indeg}^{C'}(u)\,\leq\]
\[\leq\,\Delta|O|+|V\setminus O|\,=\, (\Delta-1)|O|+n.\]

Note that $O$ is an independent set of the graph $(V,E\setminus\bigcup_{c\in C'}f_c(V))$, where $f_c:V\rightarrow E$ is defined by $f_c(u)=\{u,uc\}$. This yields the desired result.
\end{proof}

For $\lambda>0$ a graph $G$ is an \emph{$(n,d,\lambda)$-graph} if $G$ has $n$ vertices, is $d$-regular, and all the Eigenvalues of its adjacency matrix are in $[-\lambda,\lambda]\cup\{d,-d\}$. The expander mixing lemma (see, e.g.,~\cite{alo-88,alo-16},~\cite[Section 5]{Haemers} or~\cite[Lemma 2.1]{anorak}) asserts that if $S, T$ are two sets of vertices in such a graph then
\[ \left |e(S,T) - d \frac{|S||T|}{n} \right| \leq \lambda \sqrt{|S||T|\left(1 - \frac{|S|}{n}\right) \left(1 - \frac{|T|}{n} \right)}, \]
where $e(S, T)$ is the number of (ordered) edges $uv$ with $u \in S, v \in T$.

A standard application of the expander mixing lemma is bounding from above the independence number $\alpha(G)$. Here we show that $\beta(G,k)$ can be bounded in a similar manner.

\begin{lemma}\label{lem:beta}
If $G=(V,E)$ is an $(n,d,\lambda)$-graph and $d\geq 1$, then $\beta(G,k)\leq\frac{n}{d}(\lambda+2k)$ for all $k\geq 0$.
\end{lemma}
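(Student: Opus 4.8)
The plan is to adapt the classical Hoffman-type bound on the independence number of an $(n,d,\lambda)$-graph, which follows from the expander mixing lemma, to the present situation where $k$ edge-selecting functions are allowed to delete edges. First I would fix arbitrary $f_1,\dots,f_k\in\mathcal F$, set $G'=(V,E\setminus\bigcup_{i=1}^k f_i(V))$, and let $S$ be any independent set of $G'$; it suffices to prove $|S|\le\frac{n}{d}(\lambda+2k)$, since taking the maximum over all choices of the $f_i$ and of $S$ then yields the claim.

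The key observation is that, because $S$ is independent in $G'$, every edge of $G$ with both endpoints in $S$ must have been deleted, i.e.\ it lies in $\bigcup_{i=1}^k f_i(V)$. I would then bound the number of such edges: if $\{u,w\}\subseteq S$ is an edge of $G$, then it equals $f_i(v)$ for some $i\in\{1,\dots,k\}$ and some $v\in\{u,w\}\subseteq S$, and $f_i(v)$ is determined by the pair $(i,v)$. Hence the assignment sending each such edge to a witnessing pair $(i,v)\in\{1,\dots,k\}\times S$ is injective, so there are at most $k|S|$ edges of $G$ inside $S$. In the ordered count used by the mixing lemma this gives $e(S,S)\le 2k|S|$.

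Finally I would apply the expander mixing lemma with $T=S$. Using $e(S,S)\le 2k|S|$ and $1-\frac{|S|}{n}\le 1$, the inequality
\[ d\frac{|S|^2}{n}-e(S,S)\le\lambda\,|S|\Bigl(1-\frac{|S|}{n}\Bigr) \]
yields $d\frac{|S|^2}{n}\le e(S,S)+\lambda|S|\le(2k+\lambda)|S|$, and dividing by $|S|$ (the case $|S|=0$ being trivial) gives $|S|\le\frac{n}{d}(\lambda+2k)$. The only genuinely new point beyond the standard spectral argument is the combinatorial counting in the second step; once $e(S,S)\le 2k|S|$ is in place the rest is routine, so I expect that counting, together with keeping the ordered-versus-undirected bookkeeping straight, to be the main thing to get right.
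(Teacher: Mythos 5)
Your proposal is correct and follows essentially the same route as the paper's proof: fix the $f_i$, observe that any edge of $G$ inside an independent set $S$ of the pruned graph must be some $f_i(v)$ with $v\in S$ (giving at most $k|S|$ such edges, i.e.\ $e(S,S)\le 2k|S|$ in the ordered count), and then apply the expander mixing lemma with $T=S$ to get $|S|\le\frac{n}{d}(\lambda+2k)$. The paper states these two steps tersely; you have merely spelled out the injectivity of the edge-to-witness assignment and the ordered-versus-unordered bookkeeping, which is exactly the content the paper leaves implicit.
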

\begin{proof}
Let $f_1,...,f_{k}\in\mathcal F=\{f:V\rightarrow E\ |\ \forall v\in V\ v\in f(v)\}$ and let $S$ be an independent set of $(V,E\setminus\bigcup_{i=1}^{k} f_i(V))$. The number of edges of the subgraph of $G$ induced by $S$ is at most $k|S|$. The expander mixing lemma yields 
\[|S|\leq\frac{n}{d}(\lambda+2k),\]
and this implies the result.
\end{proof}

Our strategy will be to confront the bounds of Lemma~\ref{lem:necesary} and Lemma~\ref{lem:beta} by using $(n,d,\lambda)$-graphs in the construction of the examples. For this, we will also need some control over the connectivity of $(n,d,\lambda)$-graphs.

\begin{lemma}\label{lem:k}
If $G=(V,E)$ is an $(n,d,\lambda)$-graph, $d\geq 1$ and $0\leq k<\frac{(d-\lambda)^2}{d}+1$, then $G$ is $k$-connected.
 \end{lemma}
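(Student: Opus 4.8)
The plan is to argue by contradiction, producing a vertex cut and bounding its size from below in two complementary ways via the expander mixing lemma. Suppose $G$ fails to be $k$-connected. Since a simple $d$-regular graph has $n\geq d+1$ vertices and, because $\frac{(d-\lambda)^2}{d}\leq d$, the hypothesis $k<\frac{(d-\lambda)^2}{d}+1\leq d+1$ forces $k\leq d$, the requirement $k+1\leq n$ in the definition of $k$-connectivity holds automatically. Hence the only possibility is that $G$ has a vertex cut $W$ with $w:=|W|\leq k-1<\frac{(d-\lambda)^2}{d}$. Write $V\setminus W=A\sqcup B$ with no edges between $A$ and $B$, and assume $|A|\leq|B|$, so that $a:=|A|\leq\frac{n-w}{2}\leq\frac n2$. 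The goal is then to show that \emph{every} such cut satisfies $w\geq\frac{(d-\lambda)^2}{d}$, which contradicts the displayed inequality. (I work under the assumption $\lambda<d$, the meaningful case; note Ramanujan graphs have $\lambda=2\sqrt{d-1}<d$.)

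The first estimate is structural and is strong when $A$ is small. Every vertex of $A$ has all its neighbours in $A\cup W$ and at most $w$ of them in $W$, so $G[A]$ has minimum degree at least $d-w$, giving $e(A,A)\geq(d-w)a$ for the number of ordered edges inside $A$. Plugging $S=T=A$ into the expander mixing lemma yields
\[ e(A,A)\leq\frac{da^2}{n}+\lambda a\left(1-\frac an\right). \]
Comparing the two estimates and dividing by $a$ gives, after rearranging, the clean bound
\[ w\;\geq\;(d-\lambda)\left(1-\frac an\right). \]
The second estimate is strong when $a$ is large: applying the expander mixing lemma to $S=A$, $T=B$, using $e(A,B)=0$ together with $(a+w)(|B|+w)=a|B|+wn$ (since $a+|B|=n-w$), and squaring, one obtains
\[ (d^2-\lambda^2)\,a|B|\;\leq\;\lambda^2 wn. \]

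It remains to combine these, and balancing them is the crux. I would split at the threshold $a=\frac{\lambda n}{d}$, the value where the two bounds meet. If $a\leq\frac{\lambda n}{d}$, then $1-\frac an\geq\frac{d-\lambda}{d}$, and the first bound already yields $w\geq(d-\lambda)\cdot\frac{d-\lambda}{d}=\frac{(d-\lambda)^2}{d}$. If instead $a>\frac{\lambda n}{d}$, then also $|B|\geq a>\frac{\lambda n}{d}$, so $a|B|>\frac{\lambda^2n^2}{d^2}$, and the second bound gives $w>\frac{(d^2-\lambda^2)n}{d^2}$, which exceeds $\frac{(d-\lambda)^2}{d}$ because $n>d\geq\frac{d(d-\lambda)}{d+\lambda}$. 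In either case $w\geq\frac{(d-\lambda)^2}{d}$, the desired contradiction, so $G$ is $k$-connected. The main thing to get right is the choice of the crossover $\frac{\lambda n}{d}$; once it is fixed, both cases are one-line verifications, and the $(A,A)$ minimum-degree argument is what lets the small-$a$ regime avoid the loss that a naive edge-count would incur.
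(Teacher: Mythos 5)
Your proof is correct and follows essentially the same route as the paper's: both arguments take a hypothetical cut of size $\leq k-1$ with smaller side $A$ (resp.\ $S$), apply the expander mixing lemma twice, and balance the two bounds at the same threshold $|A|\approx\lambda n/d$. Your minimum-degree count combined with the mixing lemma on $(A,A)$ is the complementation-equivalent of the paper's mixing-lemma bound on $(S,T\cup U)$ together with $e(S,U)\leq|S|(k-1)$, and your case split merely repackages the paper's step showing unconditionally that $|S|\leq\lambda n/d$ (i.e.\ that your second case cannot occur), so the two proofs differ only cosmetically.
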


\begin{proof}
Suppose that $G$ is not $k$-connected. 
Since $k<\frac{(d-\lambda)^2}{d}+1<d+1\leq n$ (it can be always assumed that $\lambda\leq d$), we know that there exist non-empty $S,T,U\subseteq V$ with $S\dotcup T\dotcup U=V$, $|U|=k-1$, $e(S,T)=0$ and $|S|\leq|T|$.

Applying the expander-mixing lemma to $S,T\dotcup U$ we get that
\[\frac{(d-\lambda)|S||T\dotcup U|}{n}\leq e(S,T\dotcup U).\]
On the other hand, $e(S,T\dotcup U)=e(S,U)\leq |S|(k-1)$, so
\[\frac{(d-\lambda)(n-|S|)}{n}\leq k-1.\]
Finally, we must have that $|S|\leq\frac{n\lambda}{d}$; otherwise, applying the expander mixing lemma to $S,T$ would lead to a contradiction. This yields
\[\frac{(d-\lambda)^2}{d}+1\leq k.\]
\end{proof}

We are ready to prove the main theorem of this section.

\begin{theorem}\label{thm:connected} For every non-negative integer $k$ there exists a $k$-connected graph that is not a monoid graph. 
\end{theorem}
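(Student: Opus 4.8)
The plan is to pit the two opposing bounds on $\beta(G,k)$ — the lower bound from Lemma~\ref{lem:necesary} and the upper bound from Lemma~\ref{lem:beta} — against each other, using a highly connected triangle-free expander as $G$. Concretely, for a given $k$ I would fix a large degree $d$ and let $G$ be a bipartite $d$-regular Ramanujan graph on $n$ vertices. Such graphs exist for arbitrarily large $d$ (e.g.\ via the Lubotzky--Phillips--Sarnak construction, or Marcus--Spielman--Srivastava), are triangle-free since bipartite, and are $(n,d,\lambda)$-graphs with $\lambda\le 2\sqrt{d-1}$ (the extra eigenvalue $-d$ being permitted by the definition), so here $\delta=\Delta=d$. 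Since $\frac{(d-\lambda)^2}{d}+1\to\infty$ as $d\to\infty$, Lemma~\ref{lem:k} makes $G$ itself $k$-connected once $d$ is large enough.

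Next I would assemble the candidate graph $H$ by joining $G$ to a clique $K_\ell$, with $\ell>2d+2k+1$, via a matching of exactly $k$ edges using $k$ distinct vertices on each side (possible as $n,\ell>k$). I claim $H$ is $k$-connected: deleting any $k-1$ vertices leaves $G$ connected (it is $k$-connected), leaves $K_\ell$ connected (as $\ell-1\ge k$), and destroys at most $k-1$ of the matching edges, so at least one bridging edge survives and keeps the two parts joined. (For $k=0$ this degenerates to the disjoint union $G\sqcup K_\ell$, which is vacuously $0$-connected.)

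To see that $H$ cannot be a monoid graph, suppose $H=\underline{\Cay}(M,C)$. Since $H$ is precisely $G$ and $K_\ell$ joined by $k$ edges, with $\ell>2\Delta+2k+1$ and $G$ triangle-free, Lemma~\ref{lem:necesary} forces $\beta(G,k)\ge\frac{n}{d-1}\bigl(\frac{d}{2}-k-1\bigr)$, whereas Lemma~\ref{lem:beta} gives $\beta(G,k)\le\frac{n}{d}(\lambda+2k)\le\frac{n}{d}\bigl(2\sqrt{d-1}+2k\bigr)$. Clearing denominators, these are incompatible as soon as $\frac{d^2}{2}>2(d-1)^{3/2}+3kd+d-2k$; for fixed $k$ the left-hand side grows like $d^2/2$ while the right-hand side is $O(d^{3/2}+kd)$, so the inequality holds for all sufficiently large $d$. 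Hence, taking $d$ large relative to $k$, no monoid representation of $H$ can exist, and $H$ is the desired $k$-connected non-monoid graph.

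The substance of the argument resides in the pre-assembled lemmas rather than in the assembly itself; within this proof the only points needing genuine care are citing an explicit family of triangle-free (bipartite) Ramanujan graphs of unbounded degree, and checking that hanging $K_\ell$ off $G$ by a matching preserves $k$-connectivity while leaving $\beta(G,k)$ — which depends only on the $G$-part — untouched. The remaining parameter bookkeeping, namely taking $d$ large enough that both the connectivity condition of Lemma~\ref{lem:k} and the crossing of the two $\beta$-bounds are satisfied, is routine once the asymptotics above are in hand.
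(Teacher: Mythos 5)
Your construction and bookkeeping follow the paper's proof almost verbatim --- join a triangle-free Ramanujan graph $G$ to $K_\ell$ by $k$ edges and play Lemma~\ref{lem:necesary} against Lemma~\ref{lem:beta} --- but your single deviation from it, taking $G$ \emph{bipartite}, is fatal. For a bipartite graph, either side of the bipartition is an independent set of size at least $n/2$, and deleting edges preserves independence, so $\beta(G,k)\geq\alpha(G)\geq n/2$. Since the lower bound provided by Lemma~\ref{lem:necesary} is $\frac{n}{d-1}\left(\frac{d}{2}-k-1\right)=\frac{n}{2}\cdot\frac{d-2k-2}{d-1}<\frac{n}{2}$, a bipartite $G$ satisfies the necessary condition of Lemma~\ref{lem:necesary} automatically, and no contradiction can possibly be extracted from it. Correspondingly, the upper bound $\beta(G,k)\leq\frac{n}{d}(\lambda+2k)$ that you play against it is simply false for bipartite $G$ with $\lambda=2\sqrt{d-1}$: it would force $\alpha(G)\leq\frac{n}{d}(2\sqrt{d-1}+2k)<n/2$ for large $d$. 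The root cause is the eigenvalue $-d$: the expander mixing lemma, on which the proofs of both Lemma~\ref{lem:beta} and Lemma~\ref{lem:k} rest, needs \emph{every} eigenvalue other than $d$ itself to lie in $[-\lambda,\lambda]$; taking $S=T$ to be one side of the bipartition gives $e(S,T)=0$ while $d|S||T|/n=dn/4$, so the mixing inequality would force $\lambda\geq d$. Your parenthetical observation that the paper's definition of an $(n,d,\lambda)$-graph ``permits'' the eigenvalue $-d$ does not rescue this: read literally, that definition makes Lemma~\ref{lem:beta} a false statement (bipartite Ramanujan graphs are counterexamples), so you would be invoking a false lemma; read as intended, with only $d$ exempted, your graphs are not $(n,d,\lambda)$-graphs for $\lambda=2\sqrt{d-1}$, and neither Lemma~\ref{lem:beta} nor Lemma~\ref{lem:k} applies to them.

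The repair is exactly the paper's choice of $G$: the \emph{non-bipartite} Ramanujan graphs of Lubotzky, Phillips and Sarnak, which have large girth --- hence are triangle-free without being bipartite --- and for which every eigenvalue other than $d$ does lie in $[-2\sqrt{d-1},2\sqrt{d-1}]$. With that one substitution the rest of your argument is sound and coincides with the paper's: the matching argument for the $k$-connectivity of $H$ (which the paper leaves implicit) is correct, $\beta$ indeed depends only on the $G$-part, and your incompatibility criterion $\frac{d^2}{2}>2(d-1)^{3/2}+3kd+d-2k$ does hold once $d$ is large in terms of $k$, just as the paper's explicit threshold $d>(\sqrt{6k+6}+2)^2$ does.
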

\begin{proof}
The family of (triangle-free) non-bipartite Ramanujan graphs of unbounded degree given by Lubotzky, Phillips and Sarnak~\cite{lub-88} consists of $(n,d,\lambda)$-graphs with $\lambda=2\sqrt{d-1}$. Let $G$ be one of such graphs, also satisfying $d>(\sqrt{6k+6}+2)^2=6k+4\sqrt{6k+6}+10$, and let $\ell>2d+2k+1$. Choose $k$ different vertices $u_1,...,u_k$ of $G$ and $k$ different vertices $v_1,...,v_k$ of $K_{\ell}$. Consider the graph $H$ consisting in $G$, $K_{\ell}$ and the $k$ additional edges $\{u_1,v_1\},...,\{u_k,v_k\}$.

Observe that the inequality $(d-2\sqrt{d-1})^2>(d-2\sqrt d)^2$ holds for $d>8$. Hence we have $\frac{(d-\lambda)^2}{d}+1>\frac{(d-2\sqrt{d})^2}{d}+1=(\sqrt{d}-2)^2+1>6k+7>k$. By Lemma~\ref{lem:k}, $G$ is $k$-connected, so $H$ is $k$-connected. 

Now suppose that $H$ is a monoid graph. Lemma~\ref{lem:necesary} and Lemma~\ref{lem:beta} give
\begin{align*}
\frac{n}{d}\left(\frac{d}{2}-k-1\right)\leq\beta(G,k)&\leq\frac{n}{d}(\lambda+2k) \\
d-2\lambda-6k-2&\leq 0 \\
d-4\sqrt d-6k-2&\leq 0,
\end{align*}
but the set of solutions of the inequality $x^2-4x-6k-2\leq 0$ is $\left[2-\sqrt{6k+6},\,2+\sqrt{6k+6}\,\right]$, so the above inequalities do not hold. This contradiction implies that $H$ is not a monoid graph.
\end{proof}

\section{Generated monoid trees}\label{sec:monoid_tree}


In the present section we investigate the more restricted class of monoid graphs, where the connection set generates the monoid. This restriction has received some attention with respect to semigroups, see~\cite[Chapter 11.3]{Kna-19}. More precisely, we study the following  problem: given a tree $T$, determine whether there exist a monoid $M$ and $C$ with $M=\langle C\rangle$ such that $T=\underline{\Cay}(M,C)$. 

We know already, that trees are monoid graphs by Corollary~\ref{corol:forests}. So the present section is an investigation on how much more restrictive the condition $M=\langle C\rangle$ is.
In particular, we provide a sufficient condition (Theorem~\ref{positive_tree}) and a necessary condition (Theorem~\ref{negative_tree}) for a tree to be such a \emph{generated monoid graph}, both stated in similar terms. However, a characterization remains open.  

We start by introducing some notation borrowed from the context of distance-regular graphs~\cite[Section 20]{big-93}. Let $G=(V,E)$ be a connected graph and $e$ a fixed vertex of $G$. For every vertex $x$ we define the number of \emph{successors of $x$} with respect to $e$ as  $b_e(x)=\left|\{y\in V\ |\ \{x,y\}\in E,\ d(e,y)=d(e,x)+1\}\right|$. To avoid unnecessary complications, throughout the section we will assume that in an undirected Cayley graph $\underline{\Cay}(M,C)$ the neutral element is not in $C$.

\begin{theorem}\label{positive_tree} Let $T=(V,E)$ be a tree and $e\in V$. If for every $x,y\in V$ with $d(e,x)>d(e,y)$ we have $b_e(x)\leq b_e(y)$, then $T$ is a generated monoid graph.
\end{theorem}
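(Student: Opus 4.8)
The plan is to realize $T$, rooted at $e$, as the multiplication table of a monoid whose generators are the children of $e$. First I would extract the structural meaning of the hypothesis: since $b_e(x)\le b_e(y)$ whenever $d(e,x)>d(e,y)$, the number of children is non-increasing along every root-to-leaf path, and in particular every vertex has at most $d:=b_e(e)=\deg(e)$ children. I then set $C=\{c_1,\dots,c_d\}$ to be the children of $e$ (a legitimate connection set by Lemma~\ref{lem:left-inv}) and fix at each vertex $x$ an ordering $1,\dots,b_e(x)$ of its children.

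The key device is a \emph{saturating descent} action of the generators. For $1\le i\le d$ I define $\rho_i\colon V\to V$ by letting $\rho_i(x)$ be child number $\min(i,b_e(x))$ of $x$ when $x$ is not a leaf, and $\rho_i(x)=x$ when $x$ is a leaf. For a word $u=c_{j_1}\cdots c_{j_m}$ I write $\rho_u=\rho_{j_m}\circ\cdots\circ\rho_{j_1}$, so that $\rho_u(e)$ is the endpoint of the obvious descending walk from $e$; each vertex $v$ carries a canonical descending word $w_v$ (the child-indices along the unique $e$--$v$ path), with $\rho_{w_v}(e)=v$. I would then equip $V$ with the product $x\cdot y=\rho_{w_y}(x)$. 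Since $w_e$ is empty we get $x\cdot e=x$, and $\rho_{w_y}(e)=y$ gives $e\cdot y=y$, so $e$ is a two-sided identity; moreover $x\cdot c_i=\rho_i(x)$.

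The hard part, and the main obstacle, is associativity. Unwinding the definition gives $(x\cdot y)\cdot z=\rho_{w_yw_z}(x)$ and $x\cdot(y\cdot z)=\rho_{w_{\rho_{w_yw_z}(e)}}(x)$, so associativity is equivalent to the assertion that for every word $u$, writing $v:=\rho_u(e)$, one has $\rho_u=\rho_{w_v}$ as maps $V\to V$ — that is, the transformation induced by $u$ depends only on the vertex $u$ sends $e$ to. This is exactly where monotonicity is needed. I would argue it as follows. As the $e$-descent of $u$ only moves to children or stalls, and stalls (which occur solely at leaves) can begin only after $v$ is reached, the descending part of that walk is forced to be the unique tree path $e\to\cdots\to v$; hence $u$ factors as $u=u_1u_2$, where $u_1$ is the length-$d(e,v)$ prefix whose effective indices from $e$ are exactly those of $w_v$, and $u_2$ (present only when $v$ is a leaf) is applied entirely at $v$. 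Comparing the descents of $u_1$ and $w_v$ started at an arbitrary $x$, I would show step by step that they coincide: at the common current vertex $w$ the two effective indices are $\min(j_t,b_e(w))$ and $\min(i_t,b_e(w))$, and these agree because either $u_1$ did not saturate at that step (so $j_t=i_t$) or it did, in which case $i_t=b_e(s_{t-1})$ for the corresponding vertex $s_{t-1}$ of the $e$-descent and monotonicity yields $b_e(w)\le b_e(s_{t-1})$ (as $w$ is either at depth at least $d(e,s_{t-1})$, or already a leaf); both cases force equality of the chosen children. A last use of monotonicity shows the common endpoint $\rho_{w_v}(x)$ is a leaf whenever $v$ is, so the leftover $u_2$ merely stalls and $\rho_u(x)=\rho_{w_v}(x)$.

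With associativity established, $M=(V,\cdot)$ is a monoid with identity $e$. Finally I would identify the Cayley graph: the out-neighbours of $x$ are $\{\rho_i(x):1\le i\le d\}$, which by the saturation rule is precisely the set of \emph{all} children of $x$ (with no loop) when $x$ is not a leaf, and is $\{x\}$ when $x$ is a leaf. Deleting orientations and loops therefore recovers exactly the edges of $T$, so $\underline{\Cay}(M,C)=T$; and since every $v$ is the product $c_{i_1}\cdots c_{i_L}$ read off from $w_v$, we have $M=\langle C\rangle$, whence $T$ is a generated monoid graph. I expect the delicate point to be the saturation-control induction above, as the naive rule that simply fixes $x$ when $i>b_e(x)$ fails associativity already for a path rooted at an interior vertex.
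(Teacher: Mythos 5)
Your proposal is correct and is essentially the paper's own argument: the same saturating generator maps (send $x$ to its child of index $\min(i,b_e(x))$, fix leaves), and the same crux, namely that by the monotonicity hypothesis the transformation induced by a word is determined by where it sends $e$. The only difference is cosmetic -- the paper realizes the monoid as words $\Sigma^*/\!\sim$ under concatenation (so the work appears as well-definedness rather than associativity), which is isomorphic to your monoid on $V$ via $[w]\mapsto e*w$.
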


\begin{proof}
Let $C=\{c_1,...,c_n\}$ the set of neighbors of $e$. Let $\Sigma=\{\overline c_1,...,\overline c_n\}$ be a set of $n$ different symbols. We construct a multi-digraph with $\Sigma$-colored arcs $T'=(V,A)$ which has $T$ as its underlying simple undirected graph. Given a vertex $x\in V$, call $x_1,...,x_{b_e(x)}$ its successors respect to $e$; the successor $x_{b_e(x)}$ will be also denoted by $x_i$ for $i=b_e(x)+1,...,n$. If $x$ has no successor, then use the notation $x=x_1=...=x_n$. The arcs of $T'$ of color $\overline c_i$ are precisely $(x,x_i)$ for each $x\in V$. $T'$ depends on the choice made when labeling the successors, but in what follows that will not matter. 


Now, for each vertex $x$ and each finite word $w\in\Sigma^*$ from the alphabet $\Sigma$, define $x*w$ to be the ending vertex of the directed walk starting at $x$ and defined by $w$. Observe that for any $w\in\Sigma^*$ $d(e,e*w)\leq\textrm{length}(w)$, which is an equality if $e*w$ is not a leaf of $T$. Let $\sim$ be the equivalence relation in $\Sigma^*$ defined by $w\sim w'\Leftrightarrow e*w=e*w'$. Construct a third graph $T''$, also directed and with multiple $\Sigma$-colored arcs, but this time with vertex set $\Sigma^*/\!\sim$: there is an arc of color $\overline c$ from $[w_1]$ to $[w_2]$ if and only if $[w_1\overline c]=[w_2]$. Note that the map from $\Sigma^*/\!\sim$ to $V$ sending $[w]$ to $e*w$ is an isomorphism between $T''$ and $T'$.

Endow $\Sigma^*/\!\sim$ with the following multiplication: $[w_1][w_2]=[w_1 w_2]$ for each $w_1,w_2\in\Sigma^*$.  The following will be useful.
\begin{claim}
 For any $w,w'\in\Sigma^*$ we have $b_e(e*(ww'))\leq b_e(e*w')$.
\end{claim}
\begin{proof}
In the case that $d(e,e*(ww'))>d(e,e*w')$, this is implied by the assumed hypothesis. And in the opposite case, $d(e,e*(ww'))\leq d(e,e*w')\leq\textrm{length}(w')\leq\textrm{length}(ww')$, so either there is a chain of equalities and $w$ is the empty word (in which case we are done), or $e*(ww')$ is a leaf, and then $b_e(e*(ww'))=0$.
\end{proof}

We have to check that the definition is independent of the representatives.
Let $w_1,w_2,w'_1,w'_2\in\Sigma^*$ with $w_1\sim w'_1,w_2\sim w'_2$. Since $e*(w_1 w_2)=(e*w_1)*w_2=(e*w'_1)*w_2=e*(w'_1 w_2)$, we only have to bother if $w_2\neq w'_2$. We distinguish two cases:

If $\textrm{length}(w_2)=\textrm{length}(w'_2)$, write $w_2=\overline d_1 ...\overline d_r$ and $w'_2=\overline d'_1...\overline d'_r$ with $\overline d_i,\overline d'_i\in\Sigma$ for $1\leq i\leq r$. We have that, for each $i$, either $\overline d_i=\overline d'_i$ or $b_e(e*(\overline d_1...\overline d_{i-1}))\leq j,j'$, where $\overline d_i=\overline c_j$ and $\overline d'_i=\overline c_{j'}$. We now use the Claim
to conclude that $b_e(e*(w_1\overline d_1...\overline d_{i-1}))\leq b_e(e*(\overline d_1...\overline d_{i-1}))$ for each $i$. This implies that $w_1\overline d_1...\overline d_r\sim w_1\overline d'_1...\overline d'_r$. 

If $\textrm{length}(w_2)<\textrm{length}(w'_2)$, write $w'_2=w''_2 w'''_2$ with $w''_2,w'''_2\in\Sigma^*$ and $\textrm{length}(w''_2)=\textrm{length}(w_2)$. We know that $e*w_2=e*w'_2$ is a leaf, so $e*(w_2 w'''_2)=e*w'_2$. Here $\textrm{length}(w_2 w'''_2)=\textrm{length}(w'_2)$, so $e*w_2=e*w''_2$, which can be obtained following the argumentation of the first case. Again by the first case, $e*(w_1 w_2)=e*(w_1 w''_2)$, and by the Claim, this vertex is a leaf of $T$. Hence, $e*(w_1 w_2)=(e*(w_1 w''_2))* w'''_2=e*(w_1 w'_2)$, and we are done.

The defined multiplication is associative; showing it is an easy task: $([w_1][w_2])[w_3]=[(w_1 w_2) w_3]=[w_1 (w_2 w_3)]=[w_1]([w_2][w_3])$. Moreover, it has a neutral element, the $\sim$-class of the empty word. Therefore $\Sigma^*/\!\sim$ with this operation is a monoid, and clearly $T''\cong\Cay_{\mathrm{col}}(\Sigma^*/\!\sim,\Sigma/\!\sim)$. 
\end{proof}

Let $T=(V,E)$ be a tree and $e\in V$. For each $c$ neighbor of $e$ we define the \emph{branch} of $c$ respect to $e$ as $\mathcal B_e(c)=\{x\in V\mid d(c,x)<d(e,x)\}$. If $x\in\mathcal B_e(c)$, then we also say that $\mathcal B_e(c)$ is the branch of $x$ respect to $e$. The branches we will consider in generated monoid trees will be only with respect to (possible) neutral elements.

Given a monoid $M$ and $x\in M$, let $\varphi_x:M\rightarrow M$ be the map corresponding to left-multiplication by $x$.

\begin{prop}\label{negative_tree} Let $T=(V,E)$ be a generated monoid tree, i.e., 
$T=\underline{\Cay}(M,C)$ for some monoid $M$ and some $C$ with $M=\langle C\rangle$. Then for each $x\in V\setminus\{e\}$ there is a $y\in V$ with $d(e,x)=d(e,y)+1$ and $b_e(x)\leq b_e(y)$. Moreover, if $\Aut(T)\cap\{\varphi_x\mid x\in M\}=\{\mathrm{id}\}$, then for each vertex $x$ and each $c\in C$ there is a vertex $y_c\in\mathcal B_e(c)$ with $d(e,y_c)\leq d(e,x)+1$ and $b_e(y_c)\leq b_e(x)+\varepsilon$, where $\varepsilon=\begin{cases}0 \text{ if } x\in C\cup\{e\}, \\ 1 \text{ otherwise.}\end{cases}$
\end{prop}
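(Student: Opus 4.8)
The plan is to first normalize the setup and extract two structural facts about the oriented Cayley graph that drive both assertions. By Lemma~\ref{lem:left-inv} I may assume $C=N(e)$, so that the branches $\mathcal B_e(c)$ are exactly the connected components of $T-e$, one per generator $c\in C$. The first fact I would prove is that \emph{every non-root vertex is a forward child of its parent}: if $z$ is a child of $x$ (i.e.\ $d(e,z)=d(e,x)+1$ and $z\sim x$), then $z=xc$ for some $c\in C$. Since $M=\langle C\rangle$, I can write $z$ as a product of generators, which gives a directed walk from $e$ to $z$ in $\Cay(M,C)$; removing the tree-edge $\{x,z\}$ separates $e$ from $z$, so the last time this walk crosses $\{x,z\}$ it must traverse the directed arc $x\to z$, forcing $z=xc$. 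Iterating this along a geodesic $e=v_0,\dots,v_k=x$ yields a geodesic word $x=c_1\cdots c_k$ with $d(e,c_1\cdots c_j)=j$ for all $j$.

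The second fact is a monotonicity lemma under left-multiplication. Each $\varphi_s$ is an endomorphism of $\Cay_{\mathrm{col}}(M,C)$ by Lemma~\ref{lem:leftM}, hence maps edges of $T$ to edges or to loops, and is therefore distance-nonincreasing on $T$. I would show: if $s$ is an ancestor of $sw$ and $\varphi_s$ maps the geodesic $e\to w$ isometrically onto the geodesic $s\to sw$, then $b_e(sw)\le b_e(w)$. The argument pulls children back: a child of $sw$ is forward, say $swc'=\varphi_s(wc')$, and distance-nonincrease together with the ancestry/isometry hypothesis pins $wc'$ down to be a child of $w$ (its level cannot drop, since $d(s,swc')$ already equals the full depth, and it cannot be a loop); the assignment $swc'\mapsto wc'$ is injective because $\varphi_s(wc_1')=\varphi_s(wc_2')$ gives $swc_1'=swc_2'$. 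For the first statement this immediately applies with $s=c_1$ and $w=c_2\cdots c_k$: the geodesic-word property makes $\varphi_{c_1}$ an isometry onto an initial segment ending at $x$, so $y:=c_2\cdots c_k$ lies at distance $k-1=d(e,x)-1$ and satisfies $b_e(x)\le b_e(y)$, exactly as claimed.

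For the second statement I would first unpack the hypothesis. A bijective endomorphism of a finite graph is an automorphism, and $\varphi_x$ is injective iff $x$ is left-cancellative; hence $\varphi_x\in\Aut(T)$ iff $x$ is left-cancellative, and the assumption $\Aut(T)\cap\{\varphi_x\}=\{\mathrm{id}\}$ says precisely that every $x\neq e$ is \emph{not} left-cancellative, i.e.\ $\varphi_x$ is non-injective. For fixed $x$ and $c$ the natural candidate is $y_c:=cx=\varphi_c(x)$. When prepending $c$ to the geodesic word of $x$ stays geodesic, $y_c$ lands in $\mathcal B_e(c)$ at depth $d(e,x)+1$, and the monotonicity lemma (with $s=c$, $w=x$) gives $b_e(y_c)\le b_e(x)$. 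This covers the case $\varepsilon=0$, and in particular all $x\in C\cup\{e\}$, whose word is too short to fold.

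The remaining, genuinely delicate case is when $\varphi_c$ \emph{folds} the geodesic of $x$, so that $cx$ is shallower than $d(e,x)+1$ or escapes $\mathcal B_e(c)$ through $e$. Here I would locate the first fold (a minimal collapse of $\varphi_c$ is either an edge mapped to a loop or two siblings identified), invoke the non-injectivity guaranteed by the hypothesis to produce a collapsed configuration sitting inside $\mathcal B_e(c)$ at controlled depth, and spend the extra unit of budget ($\varepsilon=1$, available since $x\notin C\cup\{e\}$) to absorb the resulting shift in both depth and successor count. Pinning down a valid $y_c\in\mathcal B_e(c)$ precisely when left-multiplication by $c$ contracts the geodesic of $x$, and verifying that the $\varepsilon$-bookkeeping is exactly tight, is the step I expect to be the main obstacle; by contrast the forward-child lemma and the monotonicity lemma are routine once the isometry condition is isolated.
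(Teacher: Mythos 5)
Your treatment of the first statement is correct and is, in essence, the paper's own argument in different packaging: the forward-child fact (every child of $v$ equals $vc'$ for some $c'\in C$, by generatedness plus the separation property of tree edges) and the injective pullback of children are exactly what the paper encodes via the quantity $o(v)=|vC\setminus\{v\}|$ and the inequality $o(cv)\le o(v)$; your isometry hypothesis is available there because the word $c_1\cdots c_k$ is geodesic. The second statement is where the proposal has a genuine gap, in two places. First, the claim that $x\in C\cup\{e\}$ is ``too short to fold'' is false: nothing prevents $cx=c$ for $x,c\in C$, i.e.\ $\varphi_c$ collapsing the edge $\{e,x\}$ onto a loop at $c$. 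This is non-injective behaviour, which is precisely what the hypothesis $\Aut(T)\cap\{\varphi_x\mid x\in M\}=\{\mathrm{id}\}$ permits (indeed forces). Concretely, adjoin an identity to the left-zero semigroup on $\{a,b\}$ (so $uv=u$ for $u,v\neq e$): then $\underline{\Cay}(M,\{a,b\})$ is the path on $a,e,b$, the hypothesis holds because $\varphi_a,\varphi_b$ are constant maps, and $ab=a$ is a fold with $a,b\in C$. So your isometry-based lemma does not apply to this case, and the $\varepsilon=0$ part of the statement is not established by your route. Second, and more seriously, the situation you defer as ``the main obstacle'' -- folding for general $x$ -- is the actual content of the second statement, and the proposal contains only a plan for it, not an argument.

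The repair, which is how the paper proceeds, is to drop the isometry hypothesis from your monotonicity lemma altogether; it is a red herring for the second statement. Every child of $cx$ is forward (your fact 1), hence of the form $(cx)c'=c(xc')$ with $xc'\neq x$, and the assignment sending each child to such an $xc'$ is injective; since every $xc'\neq x$ is a neighbour of $x$, there are at most $b_e(x)+1$ of them (the children of $x$ plus possibly its parent). Thus $b_e(cx)\le b_e(x)+1$ holds unconditionally, with no case distinction on folding, and likewise $d(e,cx)\le d(e,x)+1$ holds unconditionally because $\varphi_c$ is distance-nonincreasing. The automorphism hypothesis is needed elsewhere: (i) to keep $y_c=cx$ inside $\mathcal B_e(c)$ -- if the $\varphi_c$-image of the geodesic from $e$ to $x$ left the branch it would pass through $e$, giving $cx_i=e$ for some vertex $x_i$ on that geodesic, whence $\varphi_c\circ\varphi_{x_i}=\mathrm{id}$, so $\varphi_c$ is surjective, hence bijective, hence a non-identity automorphism; and (ii) to sharpen to $\varepsilon=0$ when $x\in C\cup\{e\}$, since then no out-neighbour of $x$ can be the parent of $x$ (that would again make some $\varphi$ a non-identity automorphism), so the pullbacks $xc'$ land among the children of $x$. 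With these two uses isolated, all cases of the second statement follow from the single unconditional pullback bound.
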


\begin{proof} 
We start with a useful fact.
\begin{claim}
 For each $c\in C$ and each $x\in M$ we have $d(e,cx)\leq d(e,x)+1$.
\end{claim}
\begin{proof}
Let $e=x_0,x_1,...,x_r=x$ be the successive vertices of a shortest path between $e$ and $x$, where $r=d(e,x)$. Then the vertices $e,c,cx_1,...,cx_r=cx$, maybe after adding some loops, define a walk between $e$ and $cx$. Therefore $d(e,cx)\leq d(e,x)+1$.
\end{proof}

Given any $x\in M\setminus\{e\}$, let $c\in C$ with $x\in\mathcal B_e(c)$. Write $x=c_1...c_r$ with $c_i\in C$, $r=d(e,x)$ and $c_1=c$. Clearly $y=c_2...c_r$ satisfies $d(e,y)+1\leq r$. By the Claim this is, in fact, an equality. For an element $v\in M$ let $\textrm{o}(v)=|vC\setminus\{v\}|$. Observe that $\textrm o(v)=\begin{cases}b_e(v) &\text{ if } \forall c'\in C\ d(e,vc')=d(e,v)+1\\ b_e(v)+1 &\text{ otherwise} \end{cases}$ and $\textrm o(cv)\leq\textrm o(v)$. Hence to show that $b_e(x)\leq b_e(y)$ it will be enough to see that if $\textrm{o}(y)=b_e(y)+1$ then $\textrm{o}(x)=b_e(x)+1$. Now, $\textrm{o}(y)=b_e(y)+1$ means that there is some $z\in M$ and some $c''\in C$ with $yc''=z$ and $d(e,z)=d(e,y)-1$. Again by the Claim, $d(e,cz)\leq d(e,z)+1=d(e,y)=d(e,x)-1$. Since $xc''=cz$, we have that $\textrm{o}(x)=b_e(x)+1$. 

For the second part, let $y_c=cx$. Suppose that $y_c\notin\mathcal B_e(c)$. Take a walk between $e$ and $x$, and let $e=x_0,x_1,...,x_r=x$ be its successive vertices. The vertices $c=cx_0,cx_1,...,cx_r=cx$, maybe after adding some loops, define a walk between $c$ and $cx$. Therefore $cx_i=e$ for some $i$. Since $M$ is finite, $\varphi_c$ is an automorphism, contradicting that $\Aut(T)\cap\{\varphi_x\mid x\in M\}=\{\mathrm{id}\}$. Therefore, $y_c\in\mathcal B_e(c)$. The fact that $b_e(y_c)\leq b_e(x)+1$ is a consequence of $\textrm o(cx)\leq\textrm o(x)$. Furthermore, if $x\in C$ then $\textrm{o}(x)=b_e(x)$, because otherwise $\varphi_x\in\Aut(T)$, so $b_e(y_c)\leq b_e(x)$ in this case. If $x=e$ the same conclusion holds, because $\textrm{o}(x)=b_e(x)$ directly. Finally, what is left follows from the Claim.
\end{proof}

The following lemma shows that the condition for the second part of Proposition~\ref{negative_tree} is not as restrictive as it may seem.

\begin{lemma}\label{tree_symmetries} Let $T=(V,E)$ be a tree satisfying $T=\underline{\Cay}(M,C)$ for some monoid $M$ and some $C$ with $M=\langle C\rangle$. If $\Aut(T)\cap\{\varphi_x\mid x\in M\}\neq\{\mathrm{id}\}$ then there is some $c\in C$ such that $c^2=e$ and $\Aut(T)\cap\{\varphi_x\mid x\in M\}=\{\mathrm{id,\varphi_c}\}$.

\end{lemma}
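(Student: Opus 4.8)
The plan is to analyze the structure of an automorphism of $T$ of the form $\varphi_x$, exploiting that $T$ is a tree (hence has strong rigidity properties) and that left-multiplications form a monoid that must act on $V$ in a very controlled way. The key algebraic fact I would lean on is Lemma~\ref{lem:colM}: since $M=\langle C\rangle$, left-multiplication gives an \emph{isomorphism} $M\cong\Cay_{\mathrm{col}}(M,C)$, so the monoid $M'=\{\varphi_x\mid x\in M\}$ is isomorphic to $M$ itself, and the map $x\mapsto\varphi_x$ is injective with $\varphi_e=\mathrm{id}$. In particular the intersection $A:=\Aut(T)\cap M'$ is a submonoid of $M'$ consisting of bijections; being a finite monoid of bijections closed under composition, $A$ is in fact a \emph{group}.

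First I would establish that every $\varphi_x\in A$ is an involution, i.e. $\varphi_x^2=\mathrm{id}$. The idea is that an automorphism of a tree that is also a left-multiplication must have a very restricted action: $\varphi_x$ fixes $e$'s image pattern, and on a tree any automorphism either fixes a vertex or inverts an edge. More precisely, since $\varphi_x(e)=x$ and $\varphi_x$ is an automorphism, I would track distances: for any automorphism $g$ of a tree there is a unique fixed vertex or a unique fixed (inverted) edge. Combining this with the semigroup relation --- the fact that $\varphi_x\circ\varphi_x=\varphi_{x^2}$ must again either be in $A$ or collapse distances --- forces $\varphi_{x^2}=\mathrm{id}$, whence $x^2=e$ by injectivity of $x\mapsto\varphi_x$. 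So every nontrivial element of $A$ squares to the identity, meaning $A$ is an elementary abelian $2$-group.

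Next I would show $A$ cannot contain two distinct nontrivial involutions, so that $|A|\le 2$. Here I would use that $A$ acts on the tree $T$ and that any two commuting fixed-point-free-or-edge-inverting tree automorphisms that are involutions must coincide: on a tree, the set of automorphisms fixing the ``center'' (the unique fixed vertex or inverted edge) is highly constrained, and two distinct involutions in $A$ would generate a group acting on $T$ whose structure contradicts the tree being acyclic (a Klein four-group acting faithfully would force branching incompatible with the distance-layer structure coming from $e$). This pins down $A=\{\mathrm{id},\varphi_c\}$ for a single involution $\varphi_c$.

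Finally I would identify the nontrivial element as $\varphi_c$ for some $c\in C$ with $c^2=e$. Writing the nontrivial element as $\varphi_x$, I already have $x^2=e$; it remains to argue $x$ may be taken in $C$. Since $\varphi_x$ is an automorphism it inverts the edge it acts on at $e$, namely it must swap $e$ with a neighbor, so $x=\varphi_x(e)$ is a neighbor of $e$, i.e. $x\in N(e)$; by Lemma~\ref{lem:left-inv} the neighbors of $e$ can be taken to be exactly $C$, giving $x=c\in C$ with $c^2=e$ and $A=\{\mathrm{id},\varphi_c\}$. The main obstacle I anticipate is the second step: ruling out a larger $2$-group $A$ requires carefully using the tree structure together with the rigidity that left-multiplications impose (that $\varphi_x$ is determined by $x=\varphi_x(e)$ and must respect the successor counts $b_e$), rather than a purely group-theoretic argument, since a priori an elementary abelian $2$-group could be large.
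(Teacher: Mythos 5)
Your high-level skeleton matches what is true (every element of $A=\Aut(T)\cap\{\varphi_x\mid x\in M\}$ is an involution, $A$ has at most one nontrivial element, and that element lies in $C$), and your preliminary observations (injectivity of $x\mapsto\varphi_x$ via Lemma~\ref{lem:colM}, and that $A$ is a finite group of bijections) are correct. But none of your three main steps is actually proved, and in each case the missing ingredient is the same: you never use $M=\langle C\rangle$ to reduce to \emph{generators}, which is what makes adjacency (hence cycle) arguments in the tree available. In Step 1, the sentence ``$\varphi_x\circ\varphi_x=\varphi_{x^2}$ must again either be in $A$ or collapse distances'' is not a dichotomy: $\varphi_{x^2}\in A$ holds automatically since $A$ is closed under composition, and nothing you wrote forces $\varphi_{x^2}=\mathrm{id}$. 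Being a tree automorphism by itself cannot force involutivity (stars admit automorphisms of every order), so the forcing must come from the Cayley structure. The paper's argument is: write $x=c_1\cdots c_r$ with $c_i\in C$; since $\varphi_x=\varphi_{c_1}\circ\cdots\circ\varphi_{c_r}$ is a bijection of a finite set, each $\varphi_{c_i}$ is a bijective endomorphism, hence an automorphism; and for a \emph{generator} $c$ the consecutive vertices in $e,c,c^2,\dots,c^k=e$ are adjacent, so if $c^2\neq e$ these pairwise distinct vertices would form a cycle in $T$. Your $x$ need not be adjacent to $e$, which is exactly why no such argument applies to $x$ directly.

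Step 2 has the same defect: ``a Klein four-group acting faithfully would force branching incompatible with the distance-layer structure'' is not an argument, and it is not even a true obstruction at the level of abstract actions --- Klein four-groups do act faithfully on trees (e.g.\ on a spider with four legs). What rules out two distinct automorphisms $\varphi_c\neq\varphi_{c'}$ with $c,c'\in C$ is again an explicit cycle: with $k$ the order of $cc'$, the closed walk $e,c,cc',cc'c,\dots,(cc')^{k-1},(cc')^{k-1}c,e$ alternates edges given by $c$ and $c'$, and its $2k$ vertices are pairwise distinct (by bijectivity of right multiplication plus a parity-of-distance argument), so acyclicity of $T$ forces $k=1$, i.e.\ $cc'=e$ and hence $c=c'$ since $c^2=e$. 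Finally, Step 3 rests on a false claim: an involutive tree automorphism swapping $e$ and $x$ need \emph{not} swap $e$ with a neighbor --- in the path $P_3$ the nontrivial automorphism swaps the two leaves, which are at distance $2$ and fixes the midpoint. So you have not shown $x\in N(e)$; and even granting $x\in N(e)$, Lemma~\ref{lem:left-inv} replaces $C$ by the possibly larger set $N(e)$, so it would not place $x$ in the \emph{given} $C$ without a further computation (namely: $xc=e$ and $x^2=e$ give $x=x(xc)=(x^2)c=c$). In the paper, $x\in C$ falls out of the generator decomposition at no extra cost: all the $c_i$ are forced to equal a single $c\in C$ with $c^2=e$, whence $x\in\{e,c\}$.
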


\begin{proof}
Let $x\in M\setminus\{e\}$ such that $\varphi_x\in\Aut(T)$. Write $x=c_1...c_r$ with $c_i\in C$. Since $M$ is finite, $\varphi_{c_1},...,\varphi_{c_r}\in\Aut(T)$. This implies that $c_1^2=...=c_r^2=e$, or else $T$ would have a cycle. Now let $c,c'\in C$ with $\varphi_c,\varphi_{c'}\in\Aut(T)$. If $cc'\neq e$ then there is a cycle in $T$. Indeed, let $k$ be the order of $cc'$. It will be enough to show that the vertices $e,c,cc',cc'c,...,(cc')^{k-1},(cc')^{k-1}c$ are all different. It is clear that $e,cc',...,(cc')^{k-1}$ are pairwise different, and since right-multiplication by $c$ has to be bijective, so are $c,(cc')c,...,(cc')^{k-1}c$. Moreover there is no overlap between these two sets of vertices: the distance from $e$ is even in the first case and odd in the second (the presence of consecutive vertices in the succession $e,c,cc',cc'c,...,(cc')^{k-1},(cc')^{k-1}c$ would imply that $c=e$ or $c'=e$ after the cancellation of the left terms). Therefore, $cc'=e$, which means that $c=c'$.
\end{proof}

A drawback of Proposition~\ref{negative_tree} is that it can be difficult to use without knowing the (possible) location(s) of the neutral element. Luckily, the number of candidates can be sometimes restricted.

\begin{lemma}\label{localize_neutral} Let $T=(V,E)$ be a generated monoid tree, i.e., 
$T=\underline{\Cay}(M,C)$ for some monoid $M$ and some $C$ with $M=\langle C\rangle$ and let $\Delta$ be the maximum degree of $T$. We have $\Delta-1\leq\deg(e)\leq\Delta$. 
\end{lemma}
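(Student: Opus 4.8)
The plan is to rely almost entirely on the first assertion of Proposition~\ref{negative_tree}, iterated along a shortest path towards $e$. First I would record the two structural identities obtained by rooting the tree $T$ at $e$: since every neighbour of $e$ lies at distance $1$, all of them are successors, so $\deg(e)=b_e(e)$; and since every vertex $x\neq e$ has exactly one neighbour closer to $e$ (its parent) while the remaining $b_e(x)$ neighbours are farther, $\deg(x)=b_e(x)+1$. The upper bound $\deg(e)\leq\Delta$ is then immediate, as $\Delta$ is by definition the maximum degree of $T$.

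For the lower bound, I would pick a vertex $v$ of maximum degree. If $v=e$, then $\deg(e)=\Delta\geq\Delta-1$ and there is nothing more to prove. Otherwise $v\neq e$, so $b_e(v)=\deg(v)-1=\Delta-1$. The idea is to produce a chain of vertices $v=x_0,x_1,\dots,x_r=e$ with $d(e,x_i)=d(e,v)-i$ and $b_e(x_0)\leq b_e(x_1)\leq\cdots\leq b_e(x_r)$. Indeed, the first part of Proposition~\ref{negative_tree}, applied to each $x_i\neq e$, furnishes a vertex $x_{i+1}$ with $d(e,x_{i+1})=d(e,x_i)-1$ and $b_e(x_i)\leq b_e(x_{i+1})$; iterating lowers the distance by one at each step until we reach the unique vertex at distance $0$, namely $e$. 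Telescoping the inequalities gives $\Delta-1=b_e(v)=b_e(x_0)\leq b_e(x_r)=b_e(e)=\deg(e)$, which is exactly the desired lower bound.

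I do not expect a genuine obstacle here; the argument is short once the two degree identities are in place, and the only point requiring care is checking that Proposition~\ref{negative_tree} may indeed be applied at every vertex of the chain except the last---which is legitimate precisely because it applies to all $x\in V\setminus\{e\}$, and the chain reaches $e$ only at its terminal vertex $x_r$. I would also remark that the reduction allowing one to take $C=N(e)$ (Lemma~\ref{lem:left-inv}) is compatible with the hypothesis $M=\langle C\rangle$, since passing from $C$ to $N(e)\supseteq C$ cannot shrink $\langle C\rangle$; however, this reduction is not actually needed for the degree count, which depends only on the tree structure of $T$ and on the first part of Proposition~\ref{negative_tree}.
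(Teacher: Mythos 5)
Your proof is correct, but it takes a genuinely different route from the paper. You derive the lemma as a consequence of the first part of Proposition~\ref{negative_tree}: after recording the identities $\deg(e)=b_e(e)$ and $\deg(x)=b_e(x)+1$ for $x\neq e$ (both valid in a tree rooted at $e$), you iterate that proposition along a distance-decreasing chain from a maximum-degree vertex down to $e$, and the telescoped inequalities give $\Delta-1\leq\deg(e)$. The paper instead argues directly on the Cayley digraph $\Cay(M,C)$: it shows that any vertex $u\neq e$ has at most one in-neighbor that is not also an out-neighbor (using that shortest paths in the tree are unique and that each vertex is reached from $e$ by a directed path), so $\deg(u)\leq|C|+1$, while $\deg(e)\geq|C|$; comparing these bounds yields the claim. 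Your approach is shorter and cleaner \emph{given} Proposition~\ref{negative_tree}, which does appear earlier in the paper, so there is no circularity; the paper's approach is self-contained and yields the extra structural fact that every non-neutral vertex has degree at most $|C|+1$, which ties the degrees to the size of the connection set rather than only to $\deg(e)$. Your closing remark that the reduction $C=N(e)$ is not needed is also accurate.
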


\begin{proof}
Let $u\neq e$ be a vertex of $T$. Suppose that in $\Cay(M,C)$ there are two arcs $(x_1,u),(x_2,u)$ incident to $u$, and that the arcs $(u,x_1),(u,x_2)$ do not exist. We know that in $T$ any vertex $v$ is reachable from $e$ by a unique shortest path $P(v)$. The edge $\{x_1,u\}$ is the last one of either $P(x_1)$ or $P(u)$. Since the corresponding paths in $\Cay(M,C)$ are directed, from $e$ to the target vertex, $\{x_1,u\}$ is the last edge of $P(u)$. By the same argument, so is $\{x_2,u\}$, and this shows that in $\Cay(M,C)$ vertex $u$ has at most one in-neighbor which is not an out-neighbor. Therefore $\deg(u)\leq |C|+1$, and the fact that $\deg(e)\geq |C|$ yields the conclusion.
\end{proof}

We apply the above results to present two families of trees, one being generated monoid graphs and the other not  being generated monoid graphs.
A \emph{perfect $k$-ary tree} of height $h$, $T_{k,h}$ is a rooted tree in which the root has $k$ neighbors, every non-leaf vertex has also $k$ successors, i.e., neighbors that are further from the root than itself, and all the leaves are at distance exactly $h$ from the root. See Figure~\ref{fig:3-tree} for an illustration. A direct consequence of Proposition~\ref{positive_tree} is: 
\begin{obs}
Every perfect $k$-ary tree $T_{k,h}$ is a generated monoid graph.
\end{obs}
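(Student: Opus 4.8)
The plan is to verify that a perfect $k$-ary tree $T_{k,h}$, rooted at its root vertex $e$, satisfies the hypothesis of Theorem~\ref{positive_tree}, and then invoke that theorem directly. Recall that the hypothesis requires that for every pair of vertices $x,y$ with $d(e,x)>d(e,y)$ we have $b_e(x)\leq b_e(y)$, where $b_e(\cdot)$ counts the number of successors (neighbors strictly further from $e$).

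First I would compute $b_e(x)$ explicitly as a function of the distance $d(e,x)$ alone. In $T_{k,h}$, every vertex at distance strictly less than $h$ from the root has exactly $k$ successors, while every vertex at distance exactly $h$ (i.e., every leaf) has $0$ successors. Hence
\begin{equation*}
b_e(x)=\begin{cases}k & \text{if } d(e,x)<h,\\ 0 & \text{if } d(e,x)=h.\end{cases}
\end{equation*}
In particular $b_e$ depends only on $d(e,x)$ and is a nonincreasing function of that distance.

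Now I would check the hypothesis directly. Take any $x,y$ with $d(e,x)>d(e,y)$. If $d(e,y)<h$ then $b_e(y)=k\geq b_e(x)$ automatically, since $b_e$ never exceeds $k$. The only remaining case would be $d(e,y)=h$, but then $d(e,x)>h$ is impossible because $h$ is the maximum distance in $T_{k,h}$. Thus in every admissible case $b_e(x)\leq b_e(y)$, and the hypothesis of Theorem~\ref{positive_tree} holds with the chosen root $e$.

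There is essentially no obstacle here: the result is an immediate specialization of the sufficient condition, and the entire content is the observation that the successor count is a monotone (nonincreasing) function of distance to the root in a perfect $k$-ary tree. The only point requiring a word of care is the choice of base vertex $e$, which must be the root rather than an arbitrary vertex; for a non-root choice the successor counts would no longer be monotone in distance, so it is important to state explicitly that $e$ is taken to be the root. With that choice fixed, Theorem~\ref{positive_tree} yields that $T_{k,h}$ is a generated monoid graph, completing the proof.
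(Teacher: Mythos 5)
Your proof is correct and matches the paper's approach exactly: the paper states this observation as a direct consequence of Theorem~\ref{positive_tree}, and your verification (rooting at $e$, noting $b_e(x)=k$ for non-leaves and $b_e(x)=0$ for leaves, hence nonincreasing in distance) is precisely the routine check the paper leaves implicit. Your remark that $e$ must be chosen as the root is a sensible point of care, though not a substantive deviation.
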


\begin{figure}[h]
\centering
\includegraphics[scale=0.5]{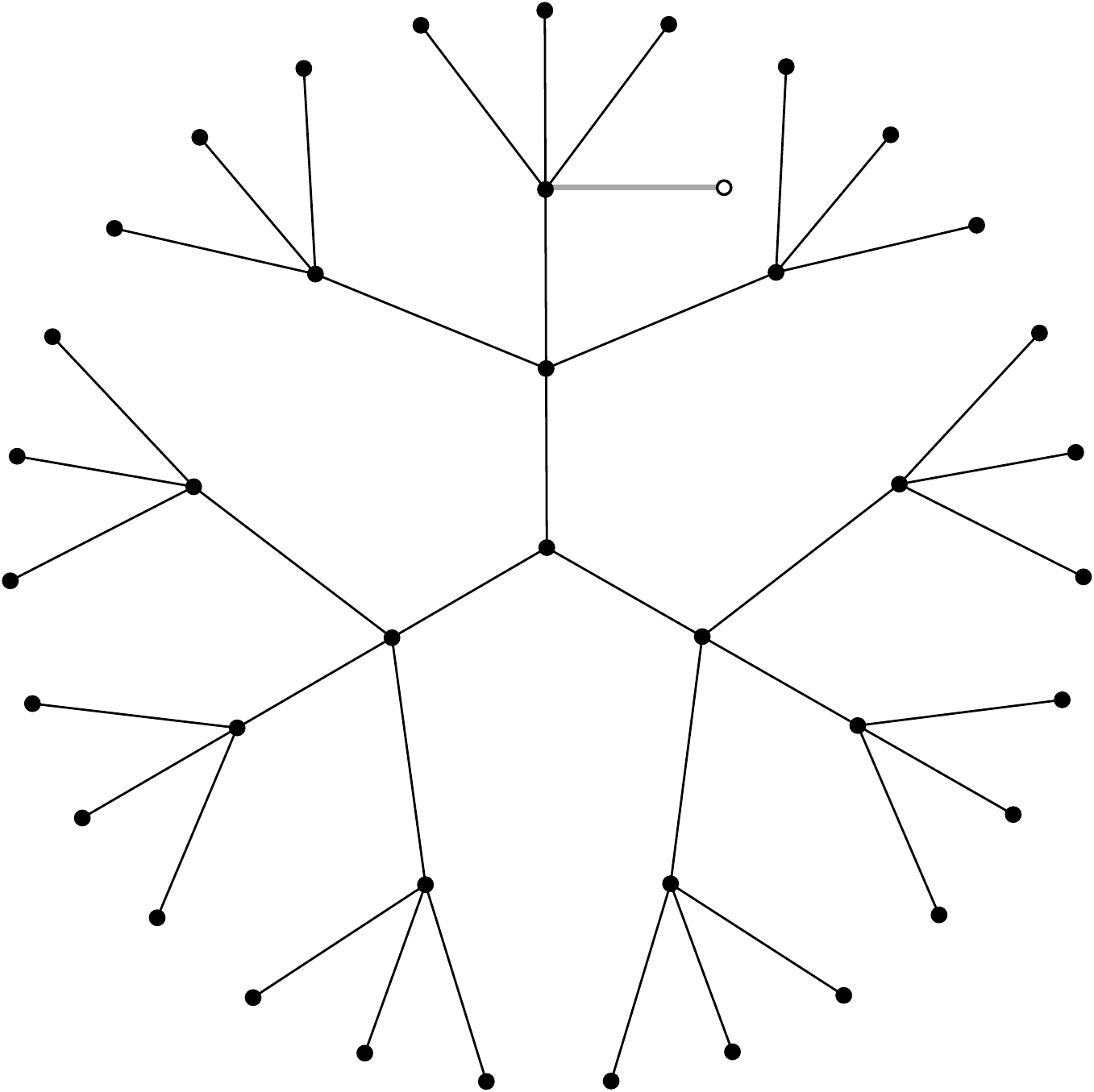}
\caption{The perfect $3$-ary tree of height $3$, and (with the grey edge added) the tree $T^+_{3,3}$.}\label{fig:3-tree}
\end{figure}


Denote by $T^+_{k,h}$ the tree obtained from $T_{k,h}$ by adding an extra leaf at distance $h$ to the root.  The tree $T^+_{3,3}$ is depicted in Figure~\ref{fig:3-tree}.
\begin{prop}
 For $k\geq 3$ and $h\geq 2$, the tree $T^+_{k,h}$ is not a generated monoid graph.
\end{prop}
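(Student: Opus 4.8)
The plan is to assume $T^+_{k,h}=\underline{\Cay}(M,C)$ with $M=\langle C\rangle$ and force a contradiction by first locating the neutral element $e$ and then invoking the necessary conditions of Proposition~\ref{negative_tree}. The relevant local data is that $p$, the penultimate vertex carrying the extra leaf, is the \emph{unique} vertex of maximum degree $\Delta=k+2$; every other non-root internal vertex has degree $k+1$, the root has degree $k$, and leaves have degree $1$. By Lemma~\ref{lem:left-inv} I may assume $C=N(e)$ (which keeps $M=\langle C\rangle$ intact), and Lemma~\ref{localize_neutral} then gives $\deg(e)\ge k+1$, so $e$ is a non-root internal vertex, possibly $p$ itself. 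I record that, with respect to such an $e$, one has $b_e(v)=k$ for a non-root internal $v\neq p,e$, while $b_e(p)=k+1$, $b_e(\mathrm{root})=k-1$, and $b_e=0$ on leaves.

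Next I would apply the first part of Proposition~\ref{negative_tree}, which forces the level-maxima $B_d=\max_{d(e,x)=d}b_e(x)$ to be non-increasing in $d$. Since $b_e(p)=k+1$ while every vertex other than $e$ and $p$ has $b_e\le k$, putting $p$ at distance $d(e,p)\ge 2$ would yield $B_{d(e,p)}\ge k+1>k\ge B_{d(e,p)-1}$, a contradiction. Hence $d(e,p)\le 1$, and exactly two cases survive: (I) $e=p$; and (II) $e$ is the parent of $p$, which forces $h\ge 3$ since for $h=2$ this parent is the excluded root.

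The core of the argument is the second part of Proposition~\ref{negative_tree}, available here because in both cases $\Aut(T)\cap\{\varphi_x\mid x\in M\}=\{\mathrm{id}\}$: every automorphism fixes the unique maximum-degree vertex $p$, and a short degree-and-adjacency check rules out any $\varphi_x$ with $x\neq e$ sending $e$ to $x$ while fixing $p$. In Case (I) I would take $x$ to be a leaf-child of $e=p$, so $x\in C$, $b_e(x)=0$ and $d(e,x)=1$; the conclusion then demands a $b_e=0$ vertex, i.e.\ a leaf, at distance $\le 2$ from $e$ in \emph{every} branch, yet the branch through the parent of $p$ has its nearest leaf at distance $3$. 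In Case (II) I would take $x$ to be a leaf at distance $2$ from $e$, so $x\notin C$, $\varepsilon=1$ and $b_e(x)=0$; because $k\ge 3$ makes every non-leaf satisfy $b_e\ge 2$ (in particular the root has $b_e=k-1\ge 2$), the conclusion forces a leaf at distance $\le 3$ in every branch, while the branch through the parent of $e$ has its nearest leaf at distance $4$. Either case is contradictory, so $T^+_{k,h}$ is not a generated monoid graph.

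The step I expect to be the main obstacle is the distance bookkeeping inside the ``upward'' branch --- through the parent $p'$ of $p$ in Case (I), through the parent of $e$ in Case (II) --- where one must verify that no leaf lies within the forbidden radius; this is exactly where the hypotheses $h\ge2$ (resp.\ $h\ge3$) and $k\ge3$ are consumed, the latter precisely to exclude a spurious $b_e=1$ vertex (the root) that would otherwise fulfil the conclusion of Proposition~\ref{negative_tree}. Confirming that $C=N(e)$ remains a generating set and that the two cases are genuinely exhaustive are the remaining details to nail down.
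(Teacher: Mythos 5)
Your proof is correct, and it shares the paper's skeleton: Lemma~\ref{localize_neutral} to force $e$ to be an internal non-root vertex, triviality of $\Aut(T)\cap\{\varphi_x\mid x\in M\}$, and a final contradiction from the second part of Proposition~\ref{negative_tree} applied to the ``upward'' branch, which contains no leaf (equivalently, no vertex of small $b_e$) within the required radius. The middle of the argument is organized differently, though. The paper never localizes $e$ beyond its depth $\ell\in\{1,\dots,h-1\}$: it takes $x$ to be a leaf at distance $h-\ell$ in the subtree below $e$ and observes that every vertex of the root's branch within distance $h-\ell+1$ is internal, hence has $b_e\geq k-1\geq 2$, giving one uniform contradiction for all placements of $e$. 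You instead first use the first part of Proposition~\ref{negative_tree} (which the paper's proof does not invoke at all) together with the uniqueness of the maximum-degree vertex $p$ to show the level-maxima of $b_e$ are non-increasing and hence $d(e,p)\leq 1$, and then dispatch the two surviving cases with concrete radius-$2$ and radius-$3$ bookkeeping. Your route is slightly longer, but it buys two things: the distance computations are done at fixed small radius rather than for general $h-\ell$, and you make explicit a step the paper leaves implicit, namely passing to $C=N(e)$ via Lemma~\ref{lem:left-inv} (noting that generation is preserved); this matters because the second part of Proposition~\ref{negative_tree} only speaks about branches $\mathcal B_e(c)$ with $c\in C$, and without the reduction the parent of $e$ need not belong to $C$, so the upward branch would not be covered. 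Finally, your automorphism-triviality check (every automorphism fixes the unique vertex of maximum degree $p$, then a degree/adjacency argument) is a sound alternative to the paper's (automorphisms preserve depth, so $\varphi_c$ cannot move $e$ to a neighbor); both are valid.
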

\begin{proof}
 
Suppose that $T^+_{k,h}=\underline{\Cay}(M,C)$ for some monoid $M$ and some $C$ with $M=\langle C\rangle$. By Lemma~\ref{tree_symmetries}, there is no non-trivial element of $M$ corresponding to an automorphism of $T^+_{k,h}$; such an automorphism would map the neutral element to a vertex which is at another depth (distance from the root). So Proposition~\ref{negative_tree} can be fully used. Let $\ell$ be the depth of the neutral element. By Lemma~\ref{localize_neutral}, $1\leq\ell\leq h-1$, because $h\geq 2$. We know that there is a leaf $x$ with $d(e,x)=h-\ell$ (and $b_e(x)=0$), but none of the vertices $y$ in the branch of the root with $d(e,y)\leq h-\ell+1$ is a leaf, and in fact they all satisfy $b_e(y)\geq k-1\geq 2$, contradicting the second part of Proposition~\ref{negative_tree}.
\end{proof}
%

In the last construction, due to the high symmetry, considering different possible placements of the neutral element was easy. If we add two leaves instead of just one to the same vertex, then by Lemma~\ref{localize_neutral} in the resulting graph  only one placement of the neutral element has to be analyzed.

\begin{remark}
The above results are enough to determine, up to order $7$, if a tree is a generated monoid graph or not. It turns out that all of them are generated monoid graphs, except one (see the left of Figure~\ref{fig:smalltree}). On the other hand on $8$ vertices we can find a first tree for which our results do not determine whether it is a generated monoid tree or not (see the right of Figure~\ref{fig:smalltree}). 
\end{remark}
\begin{figure}[h]
\centering
\includegraphics[width=.65\textwidth]{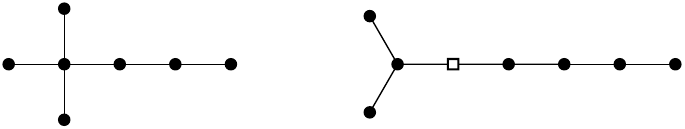}
\caption{Left: the smallest non-generated monoid tree. Right: a tree of order $8$ which is not handled by above results. Still, they yield that the only candidate to neutral element is the squared vertex.}\label{fig:smalltree}
\end{figure}

\section{Concluding remarks}\label{sec:conclusion}
 
We have studied the classes of monoid and semigroup graphs and digraphs. One of our first results is a Sabidussi-type characterization of monoid digraphs (Lemma~\ref{lem:monosabi}). Since from every semigroup $S$ one can obtain a monoid $S^+$ by adjoining a neutral element, this can in a certain way be used to obtain a characterization of semigroup digraphs: a digraph is a semigroup digraph if and only if a source can be added, such that this new vertex plays the role of $e$ in Lemma~\ref{lem:monosabi}. However, we wonder if there is an intrinsic characterization: 
\begin{quest}
Is there a Sabidussi-type characterization of semigroup digraphs?
\end{quest}

%

The central piece of this paper is about monoid graphs. Our knowledge about this class remains superficial and its structure seems hard to capture. Note in particular, that the class of monoid graphs is not closed under disjoint union by Proposition~\ref{prop:planar_nonmonoid}, which would have been a generalization of Proposition~\ref{prop:pretresh}.
The positive examples we know are forests and threshold graphs (Corollaries \ref{corol:threshold} and \ref{corol:forests}).  Forests coincide with the class of graphs of treewidth $1$. On the other hand the non-monoid graphs from Proposition~\ref{prop:planar_nonmonoid} have treewidth $3$. How about graphs of treewidth $2$? E.g., outerplanar graphs or \emph{$2$-trees}: graphs that can be obtained from $K_3$ by successively adding a new vertex with exactly two neighbors who form an edge. Figure~\ref{fig:ExampleMonoidGraph2} depicts an outerplanar $2$-tree that is a monoid graph. 
\begin{quest}
Are $2$-trees monoid graphs?  Are outerplanar graphs monoid graphs?  Are there series-parallel non-monoid graphs?
\end{quest}

Another direction are semigroup graphs; here the fundamental question is open. We formulate it in the more provocative way, even if we believe the answer to be negative as in the case of posets, see~\cite{gar-20}.
\begin{quest}
 Is every graph a semigroup graph?
\end{quest}


Finally, we have studied trees that are generated monoid graphs. We have found necessary and sufficient conditions for a tree to form part of this class (Theorem \ref{positive_tree} and Proposition \ref{negative_tree}). These conditions are in terms of simple parameters related to the distance to the root of a tree $T$. It would be nice to find a characterization in these terms. More generally both conditions are computationally easy to verify. We wonder:
\begin{quest}
 Can it be tested in polynomial time if a tree is a generated monoid tree?
\end{quest} 

\subsubsection*{Acknowledgements} This research was initiated in the BGSMath course \emph{Fragments of algebraic graph theory} 2021. Thanks to participants, lecturers, and organizers. K.K was partially supported by the Spanish \emph{Ministerio de Econom\'ia,
Industria y Competitividad} through grant RYC-2017-22701 and by MICINN through grant PID2019-104844GB-I00.

\newpage

{\small
\bibliography{monoid,lit}
\bibliographystyle{my-siam}
}
\end{document}